    \setlist[enumerate]{leftmargin=25pt}
    \setlist[itemize]{leftmargin=25pt}
    \theoremstyle{plain}
    \newtheorem{Thm}{Theorem}[section]
    \newtheorem{prop}[Thm]{Proposition}
    \newtheorem{lem}[Thm]{Lemma}
    \newtheorem{cor}[Thm]{Corollary}
    \theoremstyle{definition}
    \newtheorem{de}[Thm]{Definition}
    \newtheorem{con}{Conjecture}
    \newtheorem{summary}{Summary}
    \newtheorem*{theorem*}{Theorem}
    \theoremstyle{remark}
    \def\N{\mathbb{N}}
    \def\Q{\mathbb{Q}}
    \def\Z{\mathbb{Z}}
    \def\C{\mathbb{C}}
    \def\R{\mathbb{R}}
    \def\La{\Lambda}
    \def\la{\lambda}
    \def\stb{,\ldots ,}
    \newcommand{\bra}[1]{{\left({#1}\right)}}
    \newcommand{\set}[1]{{\left\{{#1}\right\}}}
    \DeclarePairedDelimiter\abs{\lvert}{\rvert}
    \newcommand{\scal}[1]{{\left\langle{#1}\right\rangle}}
    \newcommand{\ze}{\ensuremath{\zeta}}
    \newcommand{\Om}{\ensuremath{\Omega}}
    \newcommand{\vn}{\ensuremath{\varnothing}}
    \newcommand{\ssq}{\ensuremath{\subseteq}}
    \newcommand{\znp}{\ensuremath{\Z_N^{\star}}}
    \newcommand{\FF}{\ensuremath{\mathbb{F}}}
    \newcommand{\bs}[1]{\ensuremath{\mathbf{#1}}}
    \newcommand{\ol}{\ensuremath{\overline}}
\newcommand{\subjclass}[2][1991]{%
  \let\@oldtitle\@title%
  \gdef\@title{\@oldtitle\footnotetext{#1 \emph{MSC classes.} #2}}%
}
\newcommand{\keywords}[1]{%
  \let\@@oldtitle\@title%
  \gdef\@title{\@@oldtitle\footnotetext{\emph{Keywords:} #1.}}%
}
\begin{document}
\title{Fuglede's conjecture holds for cyclic groups of order $pqrs$}
\author{Gergely Kiss \thanks{Alfr\'ed R\'enyi Institute of
Mathematics. e-mail: kigergo57@gmail.com}
\qquad Romanos Diogenes Malikiosis \thanks{Aristotle University of
Thessaloniki, Department of Mathematics. e-mail: rwmanos@gmail.com}
\qquad G\'abor Somlai \thanks{E\"otv\"os Lor\'and University, Faculty of Science, Institute of Mathematics. e-mail: gsomlai@cs.elte.hu}
\qquad M\'at\'e Vizer \thanks{Alfr\'ed R\'enyi Institute of
Mathematics. e-mail: vizermate@gmail.com.}}
\date{}

\keywords{Fuglede's conjecture, spectral set, tile, cyclic group, character, vanishing sum,}

\subjclass[2010]{43A65; 20F70; 20K01; 13F20; 52C22}
\maketitle
\begin{abstract}The tile-spectral direction of the discrete Fuglede-conjecture is well-known for cyclic groups of square-free order, initiated by \L aba and Meyerowitz, but the spectral-tile direction is far from being well-understood. The product of at most three primes as the order of the cyclic group was studied intensely in the last couple of years.

In this paper we study the case when the order of the cyclic group is the product of four different primes and prove that Fuglede's conjecture holds in this case. \end{abstract}

\section{Introduction}
The conjecture of Fuglede \cite{F1974} was originated from a problem posed to him by Segal on the restriction of the usual differential operators
$\frac{\partial}{\partial x_i}$ (acting in the distribution sense) to commuting self-adjoint operators on $L^2(\Om)$, where $\Om$ is a bounded measurable subset of $\R^n$ with
$0<m(\Om)<+\infty$.
Fuglede showed that this happens
if and only if $\Om$ accepts an orthogonal basis of complex exponentials $e^{2\pi i\scal{\la,x}}$, which is the definition of a \emph{spectral} set. The set of frequencies $\la$
denoted by $\La$, is called the \emph{spectrum} of $\Om$.

We say that $S$ is a \textit{tile} of
$\mathbb{R}^{n}$, if there is a set $T \subset \mathbb{R}^{n}$ such
that almost every point of $\mathbb{R}^{n}$ can be uniquely written
as $s + t$, where $s \in S$ and $t \in T$. In this case, we say
that $T$ is the \textit{tiling complement of $S$}.

In the same paper, Fuglede proceeded to state the so-called Spectral Set Conjecture (that we will just call Fuglede's
conjecture):

\begin{con}\label{Fuglede}
$\Omega$ is spectral if and only if $\Omega$ is a tile.
\end{con}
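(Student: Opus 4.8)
The plan is to prove Conjecture~\ref{Fuglede} for $G=\Z_{pqrs}$ by treating the two implications separately. For ``tile $\Rightarrow$ spectral'' I would invoke the known picture for square-free cyclic groups recalled in the abstract: a tile of $\Z_N$ with $N$ square-free is known to be spectral (via the Coven--Meyerowitz/\L aba conditions), so this direction costs nothing new. All of the work is in ``spectral $\Rightarrow$ tile'' for $N=pqrs$.

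I would phrase both properties through the mask polynomial $A(x)=\sum_{a\in A}x^a\in\Z[x]/(x^N-1)$ and its zero set $Z(A)=\set{\xi\in\Z_N:\ A(\omega^\xi)=0}$ with $\omega=e^{2\pi i/N}$: since $A(x)$ has integer coefficients, $Z(A)=\bigsqcup_{d\in S_A}\mathcal Z_d$, where $S_A=\set{d\mid N,\ d>1:\ \Phi_d(x)\mid A(x)}$ and $\mathcal Z_d=\set{\xi\in\Z_N:\ \mathrm{ord}(\omega^\xi)=d}$. Two standard facts drive the argument: $A$ \emph{tiles} $\Z_N$ iff there is $B\subseteq\Z_N$ with $\abs B=N/\abs A$ and $S_A\cup S_B=\set{d\mid N:\ d>1}$; and $A$ is \emph{spectral} with spectrum $\La\ni 0$ iff $\abs\La=\abs A$ and $(\La-\La)\setminus\set 0\subseteq Z(A)$, in which case --- the character matrix being square --- $(\La,A)$ is again a spectral pair, so also $(A-A)\setminus\set 0\subseteq Z(\La)$. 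Thus spectrality is a \emph{packing} statement (a clique of size $\abs A$ in the Cayley graph of $\Z_N$ with connection set $Z(A)$), tiling is a \emph{covering} statement (a complement $B$ whose cyclotomic divisors fill in those missing from $S_A$), and the whole task is to bridge this gap for $N=pqrs$.

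After translating so that $0\in A\cap\La$, the first step is the cardinality reduction: each vanishing value $A(\omega^\xi)=0$ is a vanishing sum of $\abs A$ roots of unity of square-free order, so the Lam--Leung and de~Bruijn structure of vanishing sums, combined with the packing condition, is used to force $\abs A\mid N$; relabelling primes it then suffices to treat $\abs A\in\set{p,\ pq,\ pqr}$, the values $1$ and $pqrs$ giving trivial tilings. For each of these cardinalities I would enumerate the possibilities for $S_A$, pruned by the divisibility relation $\prod_{d\in S_A}\Phi_d(1)\mid\abs A$ (which bars most prime divisors from $S_A$) and by the packing condition, and in each branch either exhibit a tiling complement $B$ --- taking $S_B$ to cover the missing divisors and realizing it by a structured set such as a subgroup or a union of cosets of an order-$p$ subgroup --- or derive a contradiction with spectrality. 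The recurring reduction moves are: when $A$ or (using the $(\La,A)$ duality) $\La$ is a union of cosets of the order-$p$ subgroup $\langle N/p\rangle$, or when its image in the quotient $\Z_N/\langle N/p\rangle$ is the whole group, one strips off the prime $p$ and appeals to the already-settled cases $\Z_{pqr}$, $\Z_{pq}$, $\Z_p$; and one decomposes $A$ into fibres over each of the four prime directions and applies the vanishing-sum constraints fibrewise.

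The main obstacle I expect is not a single estimate but the combination of combinatorial blow-up with a few genuinely irreducible configurations. Passing from three to four primes multiplies both the number of candidate sets $S_A$ and the number of sub-cases within each cardinality, and the hard branches are precisely those where $A$ is a union of cosets of no order-$p$ subgroup and projects onto no proper quotient, so that none of the inductive reductions is available. There one has to analyse directly the clique structure of the coprimality-type Cayley graphs attached to $Z(A)$, and prove a dichotomy: a spectral set of this ``irreducible'' kind must nonetheless be equidistributed modulo one of the four primes, which makes the cyclotomic divisor missing from $S_A$ available to a complement and closes the case. I anticipate that establishing this dichotomy --- rather than the bookkeeping of the many easy branches --- is where the argument must go beyond what sufficed for $\Z_{pqr}$.
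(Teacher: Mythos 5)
First, a point of scope: the statement you are proving is Conjecture \ref{Fuglede} as stated for bounded measurable $\Omega\subset\R^n$, which is \emph{false} in general (Tao's counterexamples for $n\ge 5$, later $n\ge 3$, are recalled in the introduction); the paper proves only the cyclic-group instance $\Z_{pqrs}$ (Theorem \ref{main}), and I will read your proposal as aimed at that. Within that reading, your framework --- mask polynomials, the zero set decomposed along divisors of $N$, tiling as a covering condition on cyclotomic divisors versus spectrality as a packing condition on $\La-\La$, the duality $(S,\La)\leftrightarrow(\La,S)$, translation to $0\in S\cap\La$, and the reductions via non-primitive sets, unions of $\Z_p$-cosets, and projections to proper quotients --- coincides with the paper's setup (Section 2 and Summary \ref{summary}), and the tile-to-spectral direction is dispatched the same way. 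But there are two genuine gaps. The first is your ``cardinality reduction'': Lam--Leung together with the packing condition does \emph{not} force $\abs{S}\mid N$. A vanishing sum at a primitive $d$-th root of unity with $d$ square-free only constrains $\abs{S}$ to lie in the additive semigroup generated by the primes dividing $d$ (e.g.\ $\abs{S}=p+q$ is not excluded), and the paper never establishes nor uses $\abs{S}\mid N$ a priori; what it actually proves is the much weaker Lemma \ref{manyprimes} (at most two of the four primes divide $\abs{S}$, so WLOG $r,s\nmid\abs{S}$), and exact values such as $\abs{S}=pq$ emerge only deep inside particular branches. Your proposed enumeration over $\abs{S}\in\set{p,pq,pqr}$ therefore does not get off the ground.

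The second and more serious gap is that the core of the argument is missing. You correctly locate the hard case (primitive sets that are unions of no $\Z_p$-cosets and admit no quotient reduction) and then posit a ``dichotomy'' --- that such a spectral set must be equidistributed modulo one of the primes --- without any mechanism for proving it. That mechanism is precisely what the paper supplies, and it rests on two tools absent from your proposal: the cube rule (Theorem \ref{cuberule}), which converts $\Phi_N\mid m_B$ (and, via Lemma \ref{lemprop1}, simultaneous divisibility by several $\Phi_{N/p_i}$) into a parity identity on every combinatorial cube in the Hamming-metric picture of $\Z_{pqrs}$; and the reduction of cyclotomic polynomials modulo a prime, $\Phi_{pm}\equiv\Phi_m^{p-1}$ in $\FF_p[x]$, which feeds Lemma \ref{incompletetriangle} and lets one upgrade divisibility over $\FF_r$ to divisibility over $\Z$ and conclude $S_{pq}=\Z_{pq}$. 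The entire case analysis of Section 3 (the split according to whether $\Phi_N$ divides $m_S$ or $m_\La$ or neither, Lemmas \ref{lem:cosets} through \ref{lems3}) \emph{is} the proof of your dichotomy; as written, your proposal restates the difficulty rather than resolving it.
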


Fuglede proved this conjecture when the spectrum or the tiling complement is a lattice. However, this conjecture was largely proven to be false, initially by Tao \cite{T2004} in 2004
for dimensions $n\geq 5$, and subsequently for $n\geq3$ \cite{FMM2006,KM2006}. In a recent paper \cite{LM2019} Fuglede's conjecture has verified for all convex bodies in all dimensions. The status of this conjecture remains open for $\R$ and $\R^2$. Recently, it was shown that a spectral subset of $\R$ of Lebesgue measure $1$ has a spectrum that is
periodic \cite{IK2013}.

The notable feature of Tao's proof was the transition to the setting of finite abelian groups (although this had begun with \L{}aba \cite{L2001}, connecting Fuglede's conjecture with
the tiling results of Coven and Meyerowitz \cite{CM1999}).
Let us now state Fuglede's conjecture for abelian groups in a more precise way.

\begin{de}
Let $\Z_N$ denote the cyclic group of order $N$.

\smallskip

$\bullet$ $S\subset\Z_N$ is called \emph{spectral}, if there is some $\La\subset\Z_N$ with $\abs{S}=\abs{\La}$ and the exponential functions $e_{\la}(x)=\xi_N^{\la x}$ form an orthogonal basis over $S$, that is
 \begin{equation}\label{orthogonality}
  \scal{e_{\la},e_{\la'}}_S:=\sum_{s\in S}e_{\la}(s)\ol{e_{\la'}(s)}=\abs{S}\delta_{\la\la'}
 \end{equation}
for every $\la,\la'\in\La$, where $\xi_N=\exp(2\pi i/N)$ a primitive $N$'th root of unity. We call $\La$ the \emph{spectrum} and $(S,\La)$ is called a \emph{spectral pair} of $\Z_N$.

\smallskip

$\bullet$ $S \subset \Z_N$ is called a \emph{tile}, if there is another subset $T\subset\Z_N$ such that every element of $\Z_N$ can  uniquely be written
 as $s+t$, where $s\in S$, $t\in T$. We call $T$ the \emph{tiling complement} of $S$, we write $S\oplus T $ \footnote{In general, we write $A\oplus B$ if every element in the sumset $A+B$ can be written uniquely as a sum of an element of $A$ and an element of $B$.} $=\Z_N$  and we call $(S,T)$ a \emph{tiling pair} of $\Z_N$.

\end{de}

\begin{con}\label{fugconcyc} For any $N$ and $S \subset \mathbb{Z}_N$ we have that $S$ is spectral if and only if $S$ tiles $\Z_N$.
\end{con}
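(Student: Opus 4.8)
The plan is to recast both properties through the mask polynomial $S(x)=\sum_{s\in S}x^{s}\in\Z[x]/(x^{N}-1)$ and to reduce everything to divisibility by the cyclotomic polynomials $\Phi_{d}$, $d\mid N$. Tiling $S\oplus T=\Z_{N}$ is equivalent to $S(x)T(x)\equiv 1+x+\cdots+x^{N-1}\pmod{x^{N}-1}$, and, since $\abs{S}=\abs{\La}$, spectrality is equivalent to the requirement that $S(\xi_{N}^{\la-\la'})=0$ for every pair of distinct $\la,\la'\in\La$, i.e.\ that the nonzero differences of the spectrum are exponents of $N$-th roots of unity that are zeros of $S(x)$. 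Both sides are therefore governed by the set of prime powers $p^{a}\mid N$ for which $\Phi_{p^{a}}\mid S(x)$, which I will denote $\mathcal S_{S}$, and I would first record the resulting Fourier-analytic dictionary on $\Z_{N}$.

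Before analysing structure I would strip away the trivial reductions. If $S$ is contained in a proper subgroup $H\le\Z_{N}$, then $S$ is spectral (resp.\ tiles) in $\Z_{N}$ iff it is so in $H$, and if $S$ is a union of cosets of $H$ the question descends to $\Z_{N}/H$; iterating, the whole conjecture reduces to \emph{primitive} sets, those neither contained in nor periodic modulo any proper subgroup. This sets up a double induction: on $\omega(N)$, the number of distinct prime divisors of $N$, and on $N$ itself. The base cases are classical: prime powers are immediate, and for square-free $N$ with at most three prime factors both directions are already known.

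The bridge between the two sides is the pair of Coven--Meyerowitz conditions. Writing (T1) for $\abs{S}=\prod_{p^{a}\in\mathcal S_{S}}p$ and (T2) for the implication that $\Phi_{p_{1}^{a_{1}}}\mid S(x)\stb \Phi_{p_{k}^{a_{k}}}\mid S(x)$ (distinct primes) force $\Phi_{p_{1}^{a_{1}}\cdots p_{k}^{a_{k}}}\mid S(x)$, it is known that (T1)$\,\wedge\,$(T2) imply that $S$ tiles and that tiling implies (T1). For the tile~$\Rightarrow$~spectral direction I would read off from the factorisation of $1+x+\cdots+x^{N-1}$ exactly which $\Phi_{d}$ divide $S(x)$, build a candidate spectrum as a union of cosets of the subgroups attached to the prime powers in $\mathcal S_{S}$, and verify \eqref{orthogonality} by summing geometric progressions of roots of unity; this is the direction the abstract calls well understood for square-free order, and it goes through once (T2) is available for the tile.

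The hard direction, and the main obstacle, is spectral~$\Rightarrow$~tile: from a spectrum one must produce a tiling complement, equivalently verify (T1) and (T2) for $S$. The plan is to analyse, for each prime $p\mid N$, how $S$ distributes across the cosets of the order-$p$ subgroup, using structure theorems for vanishing sums of roots of unity (of Lam--Leung type) to pin down these local distributions, and then to glue the local data across the primes so as to force the multiplicative condition (T2). The genuine difficulty is that, as $\omega(N)$ grows, satisfying the orthogonality relations simultaneously over several primes is no longer controlled by any uniform structure theorem, and the number of configurations to be excluded grows rapidly; this is precisely why the conjecture is established only up to a bounded number of prime factors. I therefore expect that this programme does not settle Conjecture \ref{fugconcyc} in full generality, but rather advances the frontier one prime at a time, the product of four primes being the step immediately beyond the known three-prime results.
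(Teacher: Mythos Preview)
The statement you are attempting is a \emph{conjecture}, and the paper does not prove it: Conjecture~\ref{fugconcyc} remains open for general $N$, and the paper's contribution is only the special case $N=pqrs$ with four distinct primes (Theorem~\ref{main}). You recognise this yourself in the last paragraph, so your proposal is not so much a proof as a survey of the standard framework together with an honest admission that it stops short. That is a fair assessment of the state of affairs, but it is not a proof of the stated target, and there is no ``paper's own proof'' to compare against.

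If instead one compares your outline to what the paper actually does for $N=pqrs$, the approaches diverge substantially. You propose to verify the Coven--Meyerowitz conditions (T1) and (T2) for a spectral set via structure theorems for vanishing sums, but the paper does not proceed this way. Its argument is geometric and case-based: it identifies $\Z_{pqrs}\cong\Z_p\oplus\Z_q\oplus\Z_r\oplus\Z_s$ with a four-dimensional grid, translates divisibility by $\Phi_d$ into a combinatorial ``cube rule'' (Theorem~\ref{cuberule}), reduces via Summary~\ref{summary} to primitive pairs with two primes not dividing $|S|$, and then splits into the cases $\Phi_N\mid m_S$ or $\Phi_N\mid m_\La$ versus neither. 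Each branch is handled by a chain of ad hoc lemmata (Lemmata~\ref{l1}--\ref{lems3}, Lemma~\ref{incompletetriangle}, Proposition~\ref{phipqdiv}) that exploit the cube rule and the reductions $\bmod\,p$ of cyclotomic polynomials to force $S$ to be a union of cosets or to have $S_{pq}=\Z_{pq}$, from which tiling follows directly. The (T1)/(T2) formalism is never invoked as the organising principle; the proof is much closer to a finite case analysis in the four-dimensional grid than to the uniform cyclotomic programme you sketch.
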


Borrowing the notation from \cite{DL2014} and \cite{MK17}, we write
$\mathbf{S-T}(G)$ (resp. $\mathbf{T-S}(G)$), if the $Spectral
\Rightarrow Tile$ (resp. $Tile \Rightarrow
Spectral$) direction of Fuglede's conjecture holds in $G$. The above mentioned connection between the conjecture on
$\mathbb{R}$, on $\mathbb{Z}$ and on finite cyclic groups is
summarized below \cite{DL2014} (where $\mathbf{T-S}(\mathbb{Z}_{\mathbb{N}})$ means that $\mathbf{T-S}(\mathbb{Z}_n)$ holds for every $n \in \N$):

\
$$\mathbf{T-S}(\mathbb{R}) \Longleftrightarrow \mathbf{T-S}(\mathbb{Z})
\Longleftrightarrow \mathbf{T-S}(\mathbb{Z}_{\mathbb{N}}),$$

$$\mathbf{S-T}(\mathbb{R}) \Longrightarrow \mathbf{S-T}(\mathbb{Z})
\Longrightarrow \mathbf{S-T}(\mathbb{Z}_{\mathbb{N}}).$$ \

This close connection shows the importance of Fuglede's Conjecture for abelian groups. Since the result of Tao appeared, there have been many results on finite abelian groups by many authors, which we summarize below:

\begin{Thm}\label{discreteFuglederesults}
 Let $G$ be a finite abelian group and $p,q,r$ different primes.
 \begin{enumerate}[(1)]
  \item If $G=\Z_p^d$ where $p$ is an odd prime and $d\geq4$, then there is a spectral subset of $G$ that does not tile \cite{Aten2016,FS2020}.
  \item If $G=\Z_2^d$ where $d\geq10$, then there is a spectral subset of $G$ that does not tile \cite{FS2020}.
  \item If $G=\Z_8^3$, then there is a spectral subset of $G$ that does not tile \cite{KM2006}.
  \item If $G=\Z_{24}^3$, then there is a tile of $G$ that is not spectral \cite{FMM2006}.
  \item If $G=\Z_p^d$ where $p$ is prime and $d\leq3$, then any tile of $G$ is spectral \cite{Aten2016}. The converse holds if $d\leq2$ \cite{IMP2017} or $d=3$ and $p\leq7$.
  \item If $G=\Z_p\oplus\Z_{p^2}$ \cite{Shi19} or $G=\Z_p\oplus\Z_{pq}$ \cite{KS2019}, where $p\neq q$ are primes, then a subset of $G$ is spectral if and only if it is a tile.
  \item If $G=\Z_N$ and $N$ is square-free or $N=p^mq^n$ or $N=p^n d$, where $d$ is square-free, then any tile of $G$ is spectral \cite{L2002,MK20, Shi18}.
  \item\label{item8} If $G=\Z_N$ and $N=p^n, p^nq, p^nq^2, pqr, p^2qr$, then a subset of $G$ is spectral if and only if it is a tile \cite{KMSV2018,L2002,MK17, MK20, Shi18,So2019}.
 \end{enumerate}
\end{Thm}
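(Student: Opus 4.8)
My plan is to treat Theorem~\ref{discreteFuglederesults} as a compendium, organizing it by the two directions of the conjecture and recalling, for each item, the mechanism that establishes it. The uniform tool for the cyclic case is the \emph{mask polynomial}: identify $S\subset\Z_N$ with $S(x)=\sum_{s\in S}x^s\in\Z[x]$, so that $S\oplus T=\Z_N$ is the polynomial identity $S(x)T(x)\equiv 1+x+\dots+x^{N-1}\pmod{x^N-1}$; in particular, for every divisor $d\mid N$ with $d>1$ the irreducible cyclotomic polynomial $\Phi_d$ divides $S(x)$ or $T(x)$. Coven and Meyerowitz singled out two conditions on the set of prime powers $\{p^a:\Phi_{p^a}\mid S(x)\}$ --- (T1), that $|S|$ equals the product of the primes $p$ occurring there, and (T2), that any product of such $\Phi_{p^a}$ over distinct primes $p$ again divides $S(x)$ --- and \L aba showed that (T1) together with (T2) forces $S$ to be spectral, the spectrum being assembled explicitly from the divisor lattice of $N$. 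Since a tile always satisfies (T1), this reduces the tile~$\Rightarrow$~spectral half of items (5), (7) and (8) to proving that every tile of the relevant $\Z_N$ satisfies (T2), and it is exactly here that the factorization of $N$ enters: a divisibility analysis of how the factors $\Phi_d$ can be distributed between $S(x)$ and $T(x)$ forces (T2) when $N$ is square-free, when $N=p^mq^n$, and when $N=p^nd$ with $d$ square-free (\L aba, Shi, Malikiosis--Kolountzakis), and the tile direction of the finitely many $N$ listed in (8) is obtained the same way.

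The spectral~$\Rightarrow$~tile half, items (6) and (8), has no comparably uniform reduction, so I would handle each listed value of $N$ separately: starting from a spectral pair $(S,\La)$, rewrite the orthogonality relations \eqref{orthogonality} as vanishing sums of $N$th roots of unity and invoke the structure theory of such sums (de Bruijn, Lam--Leung) to control how $S$ and $\La$ intersect the cosets of the subgroups of prime-power index, and then produce a tiling complement by hand. For the non-examples (1)--(4) the plan is explicit construction: in $\Z_p^d$ with $d$ large one exhibits a set together with a spectrum of the right size while showing that a tiling complement would be over-determined by the group (the constructions in the papers cited for (1) and (2)), and the small groups $\Z_8^3$ and $\Z_{24}^3$ each admit a single hand-checkable witness --- a spectral non-tile in the first case, a non-spectral tile in the second (Kolountzakis--Matolcsi, Farkas--Matolcsi--M\'ora). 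The positive small-rank statements in (5) and the first halves of (6) come down to a finite case analysis of the admissible patterns of cyclotomic factors.

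I expect the spectral~$\Rightarrow$~tile direction to be the main obstacle: the (T1)/(T2) machinery is a clean \emph{sufficient} condition for spectrality but yields nothing constructive in reverse, and the coset-distribution argument sketched above becomes markedly more intricate every time a prime is added to $N$, since the number of subgroup cosets and the ways a spectrum can split among them grow accordingly. Pushing this through for $N=pqrs$, which has four prime factors, is precisely the task the present paper sets out to carry out.
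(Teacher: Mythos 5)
This theorem is stated in the paper purely as a compendium of results from the literature; the paper supplies no proof of any item, only the citations, so there is no internal argument to compare yours against. Your outline is broadly faithful to how the cited papers actually proceed for the \emph{cyclic} cases: the tiling identity $S(x)T(x)\equiv 1+x+\dots+x^{N-1}\pmod{x^N-1}$, the Coven--Meyerowitz conditions, \L aba's theorem that (T1) and (T2) imply spectrality, and the reduction of the tile~$\Rightarrow$~spectral half of items (7) and (8) to verifying (T2) for tiles. Two caveats. First, your statement of (T2) is wrong as written: (T2) does not say that the \emph{product of the polynomials} $\Phi_{p^a}$ divides $S(x)$ (that is automatic once each does, as they are distinct irreducibles); it says that for prime powers $p_1^{a_1},\dots,p_k^{a_k}$ of distinct primes with each $\Phi_{p_i^{a_i}}\mid S(x)$, the single cyclotomic polynomial $\Phi_{p_1^{a_1}\cdots p_k^{a_k}}$ of the \emph{product of the prime powers} divides $S(x)$; this distinction is the entire content of (T2). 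Second, you fold items (5) and (6) into ``a finite case analysis of admissible patterns of cyclotomic factors,'' but the groups there ($\Z_p^d$, $\Z_p\oplus\Z_{p^2}$, $\Z_p\oplus\Z_{pq}$) are not cyclic, so the one-variable mask-polynomial and cyclotomic-divisibility machinery does not apply; the cited proofs use Fourier analysis and equidistribution over $\FF_p^d$ and related structures instead. Neither caveat affects the present paper, which only uses these results as black boxes, but as a proof plan for the compendium itself your sketch would need both points repaired.
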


Using the fundamental theorem of finite abelian groups, the following holds for $G$ whose minimum number of generators is $d$:
\[G\cong\Z_{n_1}\oplus\Z_{n_2}\oplus\dotsb\oplus\Z_{n_d},\]
where $n_i\mid n_{i+1}$ for $1\leq i\leq d-1$. Thus, we obtain

\begin{cor}
 Let $G$ be a finite Abelian group, and $d$ be the minimum number of generators of $G$.
 \begin{itemize}
  \item If $d\geq4$ and $\abs{G}$ is odd, then there is a spectral subset of $G$ that does not tile.
  \item If $d\geq10$, then there is a spectral subset of $G$ that does not tile.
 \end{itemize}
\end{cor}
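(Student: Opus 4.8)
The plan is to deduce both statements directly from parts (1) and (2) of Theorem~\ref{discreteFuglederesults}. The bridge is the (standard) observation that a spectral set which fails to tile keeps both properties when the ambient group is enlarged; so I would organize the argument in three steps: find a large elementary abelian subgroup, prove the transfer statement, and apply the Theorem.

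First I would use the invariant factor form $G\cong\Z_{n_1}\oplus\dotsb\oplus\Z_{n_d}$ with $n_1\mid n_2\mid\dotsb\mid n_d$ recorded above. Since $d$ is the \emph{minimum} number of generators, every $n_i$ is $>1$. Choosing any prime $p\mid n_1$ we get $p\mid n_i$ for all $i$, hence each $\Z_{n_i}$ contains a copy of $\Z_p$, and their direct sum is a subgroup $H\le G$ isomorphic to $\Z_p^d$; moreover $p$ is odd whenever $\abs{G}$ is.

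Next I would prove the transfer statement: if $H\le G$ and $S\subseteq H$ is spectral in $H$ but not a tile of $H$, then $S$ is spectral in $G$ but not a tile of $G$. For spectrality, extend each character of $H$ appearing in a spectrum of $S$ to a character of $G$ --- possible because $\R/\Z$ is divisible --- and note that the inner products in \eqref{orthogonality} depend only on the values on $H$, so orthogonality is preserved, while the off-diagonal sums being zero forces the extended characters to be pairwise distinct. For non-tiling, observe that a subset of a subgroup $H$ tiles $G$ if and only if it tiles $H$: intersecting a tiling complement in $G$ with $H$ yields one in $H$ (every $h\in H$ is uniquely $s+t$ with $s\in S\subseteq H$, so $t\in H$), and conversely a tiling of $H$ extends to $G$ by adjoining a transversal of $H$ in $G$; hence a non-tile of $H$ stays a non-tile of $G$.

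Finally, apply this with $H\cong\Z_p^d$. If $\abs{G}$ is odd and $d\ge4$, then $p$ is an odd prime and $d\ge4$, so Theorem~\ref{discreteFuglederesults}(1) gives a spectral non-tile in $\Z_p^d$, which transfers to $G$. If $d\ge10$ (no parity assumed), then either $p$ is odd and Theorem~\ref{discreteFuglederesults}(1) applies since $d\ge10\ge4$, or $p=2$ and Theorem~\ref{discreteFuglederesults}(2) applies since $d\ge10$; in both cases we obtain a spectral non-tile inside $\Z_p^d\le G$, and the transfer step concludes. I do not expect a genuine obstacle: all the content sits inside Theorem~\ref{discreteFuglederesults}(1)--(2). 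The only point demanding care is the transfer step --- specifically that the extended characters remain pairwise distinct and that being a non-tile is not destroyed by passing to an overgroup --- which is why I would state and prove it as a separate lemma.
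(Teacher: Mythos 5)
Your proposal is correct and matches the argument the paper intends: the corollary is stated as an immediate consequence of the invariant factor decomposition (all $n_i>1$ by minimality of $d$, so $G$ contains $\Z_p^d$ for any prime $p\mid n_1$) together with parts (1) and (2) of Theorem~\ref{discreteFuglederesults}, via the standard fact that a spectral non-tile of a subgroup remains a spectral non-tile of the ambient group. The paper leaves these details implicit, and your transfer lemma (extending characters for spectrality, intersecting a tiling complement with $H$ for non-tiling) is exactly the right way to fill them in.
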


In this article we investigate the discrete Fuglede's conjecture for cyclic groups of order of the product of 4 different primes.  In particular we prove
\begin{Thm}\label{main}
Fuglede's conjecture is true for $\Z_{pqrs}$, where $p,q,r,s$ are different primes.
\end{Thm}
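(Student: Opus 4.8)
\begin{pf}
Since $pqrs$ is square-free, the direction $\textbf{T--S}(\Z_{pqrs})$ is already contained in Theorem~\ref{discreteFuglederesults}(7); so the entire content of Theorem~\ref{main} is the direction $\textbf{S--T}(\Z_{pqrs})$, that every spectral $S\subseteq\Z_{N}$, $N=pqrs$, tiles $\Z_{N}$. I would fix a spectral pair $(S,\La)$, translate so that $0\in S\cap\La$, and work with the mask polynomial $S(x)=\sum_{s\in S}x^{s}\pmod{x^{N}-1}$, recording $\mathrm{Div}(S)=\{\,1<d\mid N:\Phi_{d}(x)\mid S(x)\,\}$. The orthogonality relation \eqref{orthogonality} is equivalent to the statement that every nonzero element of $\La-\La$ has multiplicative order in $\mathrm{Div}(S)$; dually $(\La,S)$ is again a spectral pair, so $S$ and $\La$ may be interchanged; and I would freely use the elementary equivalence "$\Phi_{\ell}(x)\mid S(x)\iff S$ is equidistributed modulo the prime $\ell$''.

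The goal is to verify the Coven--Meyerowitz conditions (T1) and (T2) for $S$ and then invoke their tiling theorem: for square-free $N$ these say that $|S|$ equals the product of exactly those primes $\ell$ with $\Phi_{\ell}(x)\mid S(x)$, and that $\Phi_{m}(x)\mid S(x)$ for every divisor $m$ of $N$ that is a product of such primes. Two preparatory reductions feed into this. First I would prove that $|S|=|\La|$ divides $N$, so that $|S|$ is square-free; write $P_{S}$ for the set of primes dividing $|S|$. Second, for a prime $\ell\mid N$ with $\ell\nmid|S|$ I would try to eliminate it: from $\ell\nmid|S|$ and the dual relations one deduces that $S$ meets the cosets of the order-$\ell$ subgroup $H_{\ell}$ in pairwise distinct classes, so the quotient map $\pi\colon\Z_{N}\to\Z_{N/\ell}$ is injective on $S$; if in addition $\pi(S)$ can be recognized as a spectral subset of $\Z_{N/\ell}$, then Fuglede's conjecture for $\Z_{N/\ell}$ (Theorem~\ref{discreteFuglederesults}) supplies a tiling complement of $\pi(S)$, which pulls back to a tiling complement of $S$ in $\Z_{N}$.

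Everything then reduces to a case analysis on $P_{S}$. The cases $|P_{S}|=0$ and $|P_{S}|=4$ are immediate ($S$ is a point, resp.\ $S=\Z_{N}$). When $|P_{S}|\le 2$, $|S|$ is a prime or a product of two primes, so the classification of vanishing sums of $|S|$-th roots of unity (de~Bruijn--R\'edei--Schoenberg, Lam--Leung) applies to every orthogonality relation and forces $S$ to be a union of cosets along the directions in $P_{S}$ — a shape that visibly tiles and satisfies (T1)--(T2). The new and decisive case is $|P_{S}|=3$, say $P_{S}=\{p,q,r\}$ with $s\nmid|S|$: here $\pi\colon\Z_{N}\to\Z_{pqr}$ is already injective on $S$ by the second reduction, and it remains to identify $\pi(S)$ as a spectral subset of $\Z_{pqr}$, after which Theorem~\ref{discreteFuglederesults}(8) closes the argument. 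The obstruction is that a difference $\la-\la'\in\La-\La$ may have order a proper multiple of $s$ — $ps$, $qs$, $rs$, $pqs$, and so on — which is \emph{not} excluded by $s\nmid|S|$ and can genuinely occur for suitable relative sizes of the primes; such a difference has a nonzero $s$-component, so the associated orthogonality relation over $S$ does not descend verbatim to $\Z_{pqr}$. One must instead show that whenever $\Phi_{\ell s}(x)\mid S(x)$ the set $S$ factors as a product along the $s$-coordinate (using the Lam--Leung decomposition of $S\bmod \ell s$ into $\ell$-cycles and $s$-cycles, together with $s\nmid|S|$), so that the $s$-component can be stripped off and the reduced relation over $\pi(S)$ still vanishes.

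I expect this last point — descending a size-$pqr$ spectral subset of $\Z_{pqrs}$ to $\Z_{pqr}$, equivalently controlling the mixed divisibilities $\Phi_{ps},\Phi_{qs},\Phi_{rs},\Phi_{pqs},\dots$ of $S(x)$ — to be the main obstacle, for two reasons. The clean classification of vanishing sums of roots of unity is available only for orders with at most two prime factors, so once four primes are in play one can no longer read off the structure of $S$ (or of $\La$) from a single orthogonality relation; and the interaction of the four prime-order subgroups is intricate enough that one is forced into a simultaneous bookkeeping of the divisibilities of both $S(x)$ and $\La(x)$ by all $\Phi_{d}$, $d\mid N$, organized by how $S$ and $\La$ distribute over these subgroups and bootstrapped across the duality $(S,\La)\leftrightarrow(\La,S)$. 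I would also anticipate a few residual sub-configurations in which the projection reduction fails outright and which have to be closed by directly exhibiting a tiling complement and checking (T1)--(T2) by hand.
\end{pf}
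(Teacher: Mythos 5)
Your outline contains genuine gaps, and moreover it locates the difficulty in the wrong place. First, your ``preparatory reduction'' that $|S|=|\La|$ divides $N$ is not a reduction at all: for spectral sets this is not known a priori, and ruling out bad cardinalities is part of the work. Indeed, the paper must explicitly confront a candidate configuration with $|S|=4q$ when $p=2$ (inside Lemma~\ref{lpq2}) and show it is not spectral by a separate ad hoc argument; your framework, which presupposes $|S|$ square-free and $P_S\subseteq\{p,q,r,s\}$, silently excludes exactly such configurations. Second, the case you call ``new and decisive,'' $P_S=\{p,q,r\}$ with $s\nmid|S|$, is in fact immediate: by Lemma~\ref{manyprimes}, $s\nmid|\La|$ gives $m_\La(\xi_s)\neq0$, hence $S$ injects into $\Z_{N/s}$, and $pqr\mid|S|$ forces $|S|=pqr$, so $S$ is a complete residue system modulo $N/s$ and tiles with complement the order-$s$ subgroup --- no descent of spectrality to $\Z_{pqr}$ is needed. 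Conversely, the case you dismiss, $|P_S|\le2$ (equivalently $r,s\nmid|S|$ after relabelling), is where essentially all of the paper's effort goes. Your claim that Lam--Leung ``applies to every orthogonality relation and forces $S$ to be a union of cosets along the directions in $P_S$'' is unsupported: the orthogonality relations are vanishing sums of $N$-th roots of unity with $N$ a product of \emph{four} primes, and the structural decomposition into $p$-cycles and $q$-cycles is available only for two prime factors (a point you yourself make one paragraph later). A spectral set of size $pq$ in $\Z_{pqrs}$ is not visibly a union of cosets, and proving that it must nevertheless project onto $\Z_{pq}$ as a complete residue system is the heart of the matter.

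For comparison, the paper does not attempt to verify (T1)--(T2) or to descend to $\Z_{pqr}$. It splits on whether $\Phi_N$ divides $m_S$ or $m_\La$ (Cases (I) and (II)), and in each branch combines: the $4$D and $3$D cube rules (Theorem~\ref{cuberule}, Lemma~\ref{lemprop1}) to extract combinatorial structure from cyclotomic divisibility; reduction of mask polynomials modulo a prime via $\Phi_{pm}\equiv\Phi_m^{p-1}$ and Lemma~\ref{incompletetriangle} to upgrade divisibility in $\FF_{p}[x]$ to divisibility in $\Z[x]$; and the duality between $S$ and $\La$. Each branch terminates by showing $S$ or $\La$ is a union of $\Z_p$- or $\Z_q$-cosets, or non-primitive, or a complete residue system modulo a subgroup --- situations collected in Summary~\ref{summary}. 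The step you flag as the main obstacle (controlling the mixed divisors $\Phi_{pqs}$, $\Phi_{pqr}$, etc.) is handled there by Lemmas~\ref{pqrsla}--\ref{lems3}, none of which your sketch supplies. As written, your proposal is a plan rather than a proof.
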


\noindent
Combining the results in Theorem \ref{discreteFuglederesults} \ref{item8} and Theorem \ref{main} we obtain the following corollary.
\begin{cor}\label{cmain}
Fuglede's conjecture is true for $\Z_{pqrs}$, where $p,q,r,s$ are arbitrary primes.
\end{cor}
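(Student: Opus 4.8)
The plan is first to dispose of Corollary~\ref{cmain} by reducing it to Theorem~\ref{main}: if $p,q,r,s$ are not pairwise distinct, then $\Z_{pqrs}$ is isomorphic to $\Z_{t^4}$, $\Z_{t^3u}$, $\Z_{t^2u^2}$ or $\Z_{t^2uv}$ for suitable distinct primes $t,u,v$, and each of these is settled by item~(8) of Theorem~\ref{discreteFuglederesults}; in the remaining case all four primes are distinct and Theorem~\ref{main} applies. So the whole task is Theorem~\ref{main}, and there only the spectral-to-tile direction needs proof, since $N=pqrs$ is square-free and the tile-to-spectral direction is item~(7) of Theorem~\ref{discreteFuglederesults}. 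Fix therefore a spectral pair $(S,\La)$ in $\Z_N$, $|S|=|\La|$, and the goal is to exhibit a tiling complement of $S$.

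I would pass to the mask polynomial $S(x)=\sum_{a\in S}x^{a}\in\Z[x]$ and to the \emph{cyclotomic divisor set} $\mathcal D(S)=\{d\mid N:\Phi_d(x)\mid S(x)\}$. Since $S$ has integer coefficients, the orthogonality relations \eqref{orthogonality} are equivalent to the purely arithmetic condition that $N/\gcd(\la-\la',N)\in\mathcal D(S)$ for every pair $\la\neq\la'$ in $\La$; moreover a spectral pair in a finite abelian group is self-dual (the associated character matrix is a scalar multiple of a unitary matrix), so the same holds with the roles of $S$ and $\La$ interchanged. Next I would reduce to the \emph{primitive} case: if $S$ (or, dually, $\La$) lies in a coset of a proper subgroup $H<\Z_N$, then $S$ is spectral in a group whose order is a proper divisor of $N$, hence one of the groups covered by Theorem~\ref{discreteFuglederesults}, so $S$ tiles $H$ and therefore tiles $\Z_N$; a similar argument strips off periodicity. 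Thus one may assume that $S$ and $\La$ both generate $\Z_N$ and $0\in S\cap\La$.

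The next step is to determine $|S|$. Using Parseval together with the classification of vanishing sums of $N$-th roots of unity (R\'edei, de~Bruijn, Lam--Leung) for $N=pqrs$, one shows that $|S|$ divides $N$, i.e.\ is a product of a subset of $\{p,q,r,s\}$; ruling out cardinalities that do not divide $N$ is already more delicate here than for three primes, because both the subgroup lattice of $\Z_{pqrs}$ and the structure of its vanishing root-of-unity sums are richer. Once $|S|\in\{1,p,q,r,s,pq,\dots,pqr,\dots,pqrs\}$, the cases $|S|=1$ or $|S|=N$ are trivial, and $|S|$ a prime $p'$ is handled by a short argument showing $\Phi_{p'}(x)\mid S(x)$, whence $S$ is a transversal of the subgroup of index $p'$, which is then a tiling complement. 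The content is concentrated in $|S|$ equal to a product of two or of three of the primes.

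For those cases I would work through the quotient maps $\pi_H\colon\Z_N\to\Z_{N/H}$ over the prime-order subgroups $H$, tracking how $S$ and $\La$ are distributed among the cosets of $H$. Whenever $\pi_H(S)$ is again spectral with a spectrum that lifts, the known cases for $\Z_{p_ip_j}$ and $\Z_{p_ip_jp_k}$ (all covered by Theorem~\ref{discreteFuglederesults}) let one descend, work fibre by fibre over the cosets, and reassemble a tiling complement; the configurations that do not reduce this way are attacked by mod-$p_i$ equidistribution (``cuboid'') arguments, with the aim of verifying the Coven--Meyerowitz conditions (T1) and (T2) for $S$, after which $S$ tiles because (T1) together with (T2) implies tiling unconditionally. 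The hard part, and what I expect to be the crux of the entire argument, is the case $|S|=p_ip_j$ in which both $S$ and $\La$ are genuinely ``four-dimensional'', i.e.\ not supported, even after translation, on a small union of cosets of a proper subgroup: there $\mathcal D(S)$ and $\mathcal D(\La)$ may be complicated subsets of the sixteen-element divisor lattice of $pqrs$, their only link being that the associated zero sets must jointly absorb all nonzero differences coming from $\La$ and from $S$, and to exclude a spectral-but-non-tiling $S$ one must exploit the \emph{full} structure of vanishing sums of $pqrs$-th roots of unity --- precisely where, in contrast to the $pqr$ case, ``non-obvious'' relations first occur --- together with the self-dual constraint between $\mathcal D(S)$ and $\mathcal D(\La)$. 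Carrying this out amounts to a careful and somewhat lengthy case analysis over the subsets of $\{p,q,r,s\}$, which is the technical heart of the paper.
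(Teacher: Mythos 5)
Your first paragraph is exactly the paper's proof of Corollary~\ref{cmain}: when the primes are not pairwise distinct, $pqrs$ has one of the forms $t^4$, $t^3u$, $t^2u^2$, $t^2uv$, all covered by item~(8) of Theorem~\ref{discreteFuglederesults}, and the pairwise distinct case is Theorem~\ref{main}; this is correct and complete for the statement in question. The remaining three paragraphs are an outline of a proof of Theorem~\ref{main} itself, which the corollary does not require (the paper simply cites the theorem); for what it is worth, that outline follows a different route from the paper (classifying $|S|$ among divisors of $N$ and aiming to verify the Coven--Meyerowitz conditions (T1)--(T2), rather than the paper's dichotomy on whether $\Phi_N$ divides $m_S$ or $m_\La$ combined with cube-rule arguments) and is only a strategy, not a proof --- but this does not affect the correctness of your proof of the corollary.
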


\section{Preliminaries; Structures and tools}

\subsection{Basic properties of Spectral and Tiling pairs}

In this section we list some properties of spectral pairs that we use during the proof. Our first remark is that we will always identify the dual group of a cyclic group $\Z_N$ with itself.

\

$\bullet$ First note that spectral pairs satisfy the following duality property.

\begin{lem}\label{duality}
Let $G$ be a finite abelian group. Assume that $S \subset G$ is a spectral set having $\Lambda$ as a spectrum. Then $S$ is also a spectrum for $\Lambda$.
\end{lem}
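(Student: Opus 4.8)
The plan is to unwind the definition of a spectral pair, which is symmetric in $S$ and $\Lambda$ once one normalizes correctly. Suppose $(S,\Lambda)$ is a spectral pair of $\Z_N$, so $\abs{S}=\abs{\Lambda}=:k$ and the orthogonality relation \eqref{orthogonality} holds: for all $\la,\la'\in\Lambda$,
\[
\sum_{s\in S}\xi_N^{(\la-\la')s}=k\,\delta_{\la\la'}.
\]
First I would observe that this says precisely that the $k\times k$ matrix $M=\bra{\xi_N^{\la s}}_{\la\in\Lambda,\,s\in S}$ satisfies $MM^{*}=kI_k$, where $M^{*}$ is the conjugate transpose; indeed the $(\la,\la')$ entry of $MM^{*}$ is $\sum_{s\in S}\xi_N^{\la s}\overline{\xi_N^{\la' s}}=\sum_{s\in S}\xi_N^{(\la-\la')s}$.

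Next I would use the elementary linear-algebra fact that a square matrix $M$ with $MM^{*}=kI_k$ also satisfies $M^{*}M=kI_k$: from $MM^{*}=kI_k$ we get that $M$ is invertible with $M^{-1}=k^{-1}M^{*}$, and then $M^{*}M=M^{*}(M M^{*})(M^{*})^{-1}\cdot\tfrac{1}{k}\cdot k = \dots$; more cleanly, $M^{-1}M=I_k$ gives $k^{-1}M^{*}M=I_k$, i.e. $M^{*}M=kI_k$. Writing out the $(s,s')$ entry of $M^{*}M$ for $s,s'\in S$ yields
\[
\sum_{\la\in\Lambda}\overline{\xi_N^{\la s}}\,\xi_N^{\la s'}=\sum_{\la\in\Lambda}\xi_N^{\la(s'-s)}=k\,\delta_{ss'},
\]
which is exactly the statement that the exponentials $e_{s}$, $s\in S$, form an orthogonal basis over $\Lambda$ in the sense of \eqref{orthogonality} (after relabelling the roles of the index set and the ground set, and using that the dual of $\Z_N$ is identified with $\Z_N$ itself, so that both $s'-s$ and $\la$ range over $\Z_N$ as required). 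Since $\abs{\Lambda}=\abs{S}$, this shows $(\Lambda,S)$ is a spectral pair, i.e. $S$ is a spectrum for $\Lambda$.

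The only mild subtlety — and the single "step to get right" rather than a real obstacle — is the bookkeeping of which variable plays the role of the point and which plays the role of the frequency, together with the complex conjugation: one must check that $\xi_N^{\la s}=\overline{\xi_N^{-\la s}}$ and that translating $\Lambda$ or $S$ does not affect spectrality, so that the asymmetry between $e_\la(s)=\xi_N^{\la s}$ and its conjugate washes out. There is no genuine difficulty here; the lemma is essentially a restatement of the fact that a scaled unitary's rows and columns are both orthogonal. An alternative, even shorter route is to phrase everything in terms of the Gram matrix / Fourier-analytic identity $\widehat{\mathbf{1}_S}(\la-\la')=0$ for $\la\ne\la'$ in $\Lambda$ and note this relation is visibly symmetric under swapping the difference set of $S$ with that of $\Lambda$ via Plancherel; but the matrix argument above is the cleanest to write out in full.
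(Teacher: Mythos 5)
Your proof is correct. The paper states this lemma without proof (treating it as a standard fact), and your argument --- that the $k\times k$ matrix $M=\bra{\xi_N^{\la s}}_{\la\in\La,\,s\in S}$ satisfies $MM^{*}=kI_k$ if and only if $M^{*}M=kI_k$, since a one-sided inverse of a square matrix is two-sided --- is exactly the standard justification, and it carries over verbatim to a general finite abelian group $G$ upon replacing $\xi_N^{\la s}$ by the symmetric pairing $\chi_\la(s)$ coming from an identification of $G$ with its dual.
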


\

$\bullet$ A trivial property of a tiling pair $(T,S)$ is $(T-T)\cap(S-S)=\set{0}$.
There is a similar property for spectral pairs involving the difference set of a spectrum that we introduce now. Let $(S,\La)$ be a spectral pair of $\Z_N$. Let us denote by $\bs1_S$ the characteristic function of $S$, and the Fourier transform of a function $f:\Z_N\to\C$ as
\[\hat{f}(y)=\sum_{x\in\Z_N}f(x)\xi_N^{-xy},\]
we have for any $\la,\la'\in\La$, where $\la\neq\la'$,
\[0=\sum_{s\in S}e_{\la}(s)\ol{e_{\la'}(s)}=\sum_{s\in S}\xi_N^{(\la-\la')s}=\hat{\bs1}_S(\la'-\la)\]
by \eqref{orthogonality}, therefore
\begin{equation}\label{spectrality}
 \La-\La\subset\set{0}\cup\set{d\in\Z_N:\hat{\bs1}_S(d)=0},
\end{equation}
that is, the difference set of a spectrum is a subset of the zero set of the Fourier transform of $\bs1_S$ along with $\set{0}$.

\

$\bullet$ Relation \eqref{spectrality} can also be expressed with the notion of the mask polynomial of a multiset.

\begin{de}\label{defmask} For a cyclic group $G$, a field $K$ and a function $f:G \rightarrow K$, we define the \textit{mask polynomial of} $f$ (in $K[x]$) by $$m_{f}(x)=\sum_{g \in G}f(g) x^g.$$

For a (multi)set $S$ on $G$ we denote  the multiplicity function of $S$ by $f_S$ (that is a function from $G$ to $\N$) and we denote by $m_S$ the mask polynomial of $f_S$.
\end{de}

The basic connection between the mask polynomial and the Fourier transform of (the characteristic function of) a set $S$ is
\[\hat{\bs1}_S(d)=m_S(\xi_N^{-d}),\]
so we may rewrite \eqref{spectrality} as
\begin{equation}\label{spectralitymask}
 \La-\La\subset\set{0}\cup\set{d\in\Z_N:m_S(\xi_N^{-d})=0}.
\end{equation}

We define $Z(S):=\set{d\in\Z_N:m_S(\xi_N^{-d})=0}$. Since $m_S(x)\in\Z[x]$, the following holds: if $g\in\Z_N^{\star}$, then
\[m_S(\xi_N^d)=0\Longrightarrow m_S(\xi_N^{dg})=0.\]
It is clear from \eqref{spectralitymask} that $Z(S)$ is a union of subsets of the form $d\Z_N^{\star}=\set{dg:g\in\Z_N^{\star}}$, where $d\mid N$, in particular for a spectral pair $(S,\La)$ in $\Z_N$ we have
\begin{equation}\label{divclass}
\La-\La\ssq\set{0}\cup\bigcup_{d\mid N,\	m_S(\xi_N^d)=0}d\Z_N^{\star}.
\end{equation}

\subsection{Geometric interpretation of cyclic groups of square-free order and the cube rule}

$\bullet$ For $m,n \in \N$ with $m \mid  n$ let us define the \textit{natural projection} of $\Z_n$ onto $\Z_m \le \Z_n$, $\mathbb{Z}_m$ denotes the only subgroup of $\mathbb{Z}_n$ of order $m$.
Let $k \in \N$ with $\frac{n}{m} \mid k$ and $k \equiv 1 \pmod{m}$. Then $x \to k\cdot x$ defines a surjective homomorphism from $\Z_n$ to $\Z_m$.
Note that the function defined in this way is independent on the choice of $k$. If $U$ is a subset of $\Z_n$, then its \textit{projection} to $\Z_m$, which we denote by $U_m$, is a multiset defined as $\{k\cdot u \mid u \in U \}$.

\

In this subsection our goal will be to introduce a geometric interpretation of cyclic groups of square-free order and an important observation that we call the cube rule. We use the  `geometric language' introduced below throughout the proof.

\

$\bullet$ Note that the cyclic group $\Z_{N}$ with $N =p_1 p_2 \dots p_k$ ($p_i'$s are different prime numbers) can be written as the direct sum $ \oplus_{i=1}^k \Z_{p_i}$, so its elements can also be considered as $k$-tuples. Let the \textit{Hamming distance} of two elements $x, y \in \oplus_{i=1}^k \Z_{p_i}$ be the number of coordinates they differ. Let us denote by $d_H(x,y)$ the Hamming distance of $x$ and $y$.

\noindent
We consider $\oplus_{i=1}^k \Z_{p_i}$ as the subset of the $k$-dimensional integer grid, so we identify it with
\[\{ x=(x_1,x_2, \ldots ,x_k) \in \Z^k \mid 0 \le x_1 \le p_1-1, \ 0 \le x_2 \le p_2-1, \  \ldots, \ 0 \le x_k \le p_k-1\}.
\]

\noindent
In this interpretation every coset of a subgroup of order $d=p_{i_1}\dots p_{i_s}$ of $\Z_N$ can be identified with a proper $s$-dimensional affine subspace of
$\oplus_{i=1}^k \Z_{p_i}$.
For instance, $\Z_{p_1}$-cosets are lines that are the set of vertices with the same $\Z_{p_2}$-, $\ldots, \Z_{p_k}$-coordinates.

\

$\bullet$ Now we recall an important tool that was introduced in \cite{KMSV2018}. It describes a condition that a multiset $B \subseteq \Z_N$ should satisfy, in case $N$ is square-free and $ \Phi_N \mid m_B$. For an integer $N$ we denote by $\omega(N)$ the number of (different) prime divisors of $N$. If $N$ is square-free, then we can consider $\Z_N=\oplus_{i=1}^{\omega(N)}\Z_{p_i}$ as the direct sum of groups of different prime order. We call a subset $C$ of $\Z_N$ an \textit{$\omega(N)$-dimensional cube}, if $C=  \oplus_{i=1}^{\omega(N)} A_i$, where $A_i \subset \Z_{p_i}$ are $2$-element sets and let $c_0 \in C$.

\begin{Thm}\label{cuberule}{(\textbf{Cube rule}; \cite{KMSV2018}, Proposition 3.5.)} Suppose that $N$ is a square-free integer, $B \subseteq \Z_{N}$ is a multiset and  $ \Phi_N \mid m_B$. Then for every $\omega(N)$-dimensional cube $C$ and $c_0\in C$ the following holds
 \begin{equation}\label{eqcuberule}
     \sum_{c \in C}(-1)^{d_H(c_0,c)}B(c)=0,
 \end{equation}
where $B(c)$ denotes the multiplicity of $c$ in $B$.
\end{Thm}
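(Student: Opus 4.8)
\emph{Sketch of the intended argument.} The plan is to move to the character side of $\Z_N$ and play the product structure of a cube against the single cyclotomic condition $\Phi_N\mid m_B$. Write $N=p_1\cdots p_k$ with $k=\omega(N)$ and identify $\Z_N=\bigoplus_{i=1}^{k}\Z_{p_i}$; accordingly each character $\chi$ of $\Z_N$ splits as $\chi=(\chi_1,\dots,\chi_k)$ with $\chi_i$ a character of $\Z_{p_i}$. Since every $\Z_{p_i}$ has prime order, $\chi_i$ is either trivial or injective, so $\chi$ is \emph{faithful} (equivalently, $\chi(1)$ is a primitive $N$th root of unity) exactly when all the $\chi_i$ are nontrivial. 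Under this dictionary the hypothesis $\Phi_N\mid m_B$ says precisely that $\widehat{B}(\chi):=\sum_{c\in\Z_N}B(c)\chi(c)$ vanishes for every faithful $\chi$: indeed $\widehat{B}(\chi)=m_B(\chi(1))$, and $\Phi_N$ is the (square-free) product $\prod_{\eta}(x-\eta)$ taken over the primitive $N$th roots of unity $\eta$.

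Next I would rewrite the left-hand side of \eqref{eqcuberule} as a pairing on the group. For the cube $C=A_1\times\cdots\times A_k$ write $A_i=\{u_i,v_i\}$ with $u_i$ the $i$th coordinate of $c_0$; then for $c=(c_1,\dots,c_k)\in C$ one has $(-1)^{d_H(c_0,c)}=\prod_{i=1}^{k}\sigma_i(c_i)$, where $\sigma_i(u_i)=1$ and $\sigma_i(v_i)=-1$. Letting $g\colon\Z_N\to\{-1,0,1\}$ be this signed indicator of $C$, the target sum is $\sum_{x\in\Z_N}B(x)g(x)$, which by the orthogonality relations for the characters of $\Z_N$ equals $\tfrac1N\sum_{\chi}\widehat{B}(\chi)\,\widehat{g}(\chi^{-1})$. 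Because $C$ and $g$ factor over the coordinates, $\widehat{g}(\psi)=\prod_{i=1}^{k}\bigl(\psi_i(u_i)-\psi_i(v_i)\bigr)$ for every character $\psi$, and since $u_i\neq v_i$ in $\Z_{p_i}$ the $i$th factor is $0$ iff $\psi_i$ is trivial; hence $\widehat{g}(\psi)\neq0$ forces $\psi$ to be faithful.

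Putting the two facts together would end the proof: a term $\widehat{B}(\chi)\widehat{g}(\chi^{-1})$ can be nonzero only when $\chi^{-1}$, hence $\chi$, is faithful, but then $\widehat{B}(\chi)=0$ by the hypothesis, so the whole sum vanishes, which is \eqref{eqcuberule}. I expect the only real subtlety — and the step to state carefully — to be the dictionary of the first paragraph, since that is exactly where square-freeness enters: for $\Z_{p_i}$ of prime order ``nontrivial'' coincides with ``injective,'' which is what makes ``$\widehat{g}(\psi)\neq0$'' equivalent to ``$\psi$ faithful'' and thereby links the product shape of the cube to the lone condition $\Phi_N\mid m_B$. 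I would also remark that positivity of $B$ is never used, so the statement holds verbatim for any $\C$-valued function $B$ on $\Z_N$ with $\Phi_N\mid m_B$.
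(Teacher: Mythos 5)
Your argument is correct and complete. Note first that the paper you were given does not actually prove this statement: Theorem~\ref{cuberule} is imported verbatim from \cite{KMSV2018} (Proposition 3.5 there), so there is no in-text proof to compare against. Your Fourier-analytic route is a clean, self-contained substitute. All the steps check out: the dictionary $\widehat{B}(\chi)=m_B(\chi(1))$ together with irreducibility of $\Phi_N$ over $\Q$ correctly translates $\Phi_N\mid m_B$ into the vanishing of $\widehat{B}$ at all faithful characters; the identity $(-1)^{d_H(c_0,c)}=\prod_i\sigma_i(c_i)$ and the factorization $\widehat{g}(\psi)=\prod_i\bigl(\psi_i(u_i)-\psi_i(v_i)\bigr)$ are right because the cube is a product set; and the Parseval pairing $\sum_x B(x)g(x)=\frac1N\sum_\chi\widehat{B}(\chi)\widehat{g}(\chi^{-1})$ then kills every term, since each $\chi$ is either faithful (so $\widehat{B}(\chi)=0$) or not (so $\widehat{g}(\chi^{-1})=0$). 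You correctly isolate where square-freeness enters: for prime $p_i$ the factor $\psi_i(u_i)-\psi_i(v_i)$ vanishes exactly when $\psi_i$ is trivial, which is what makes the support of $\widehat{g}$ exactly the faithful characters. An equivalent, purely polynomial way to package your computation of $\widehat{g}$ is to observe that the mask polynomial of the signed cube indicator factors as $\prod_i\bigl(x^{\alpha_i}-x^{\beta_i}\bigr)$ with $\alpha_i-\beta_i$ of order $p_i$, hence vanishes at every non-primitive $N$th root of unity, so that $m_B\cdot m_g$ is divisible by $x^N-1$ and the constant-coefficient extraction gives \eqref{eqcuberule}; this is essentially the argument in \cite{KMSV2018}, whereas a more combinatorial alternative would invoke the de Bruijn--R\'edei--Schoenberg/Lam--Leung description of vanishing sums of $N$th roots of unity for square-free $N$. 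Your closing remark that positivity of $B$ is never used is also correct: the statement holds for any $\C$-valued weight with $\Phi_N\mid m_B$.
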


We say that $B$ satisfies the \emph{$\omega(N)$-dimensional cube rule}, if equation \eqref{eqcuberule} holds for every  $\omega(N)$-dimensional cube $C$. For shortening our notation a 2-, 3- or 4-dimensional cube (rule) will be denoted by 2D, 3D, 4D cube (rule), respectively.

\

$\bullet$
In general, for square-free $N=\prod_{i=1}^{\omega(N)}p_i$ we may introduce the $d$-dimensional cube $C_{p_{i_1}\dots p_{i_d}}= \oplus_{j=1}^{d} A_{p_j}$, where $A_{p_j} \subset \Z_{p_j}$ are proper $2$-element sets.
If $x,y\in \Z_N$ with $d_H(x,y)=d$, then we denote by $C_{p_{i_1} \dots p_{i_d}}(x,y)$ the unique $d$-dimensional cube determined by $x,y$, where $x,y$ differ in $p_{i_1}, \dots, p_{i_d}$-coordinates.
If it is clear form the context, then we simply denote this cube by $C(x,y)$.

\subsection{Preliminary statements}

Let us start with some statements from \cite{KMSV2018}.

\begin{lem}\label{zero}
Let $G$ be a finite abelian group. It is enough to prove the Spectral $\Rightarrow$ Tile direction of Fuglede's conjecture for  spectral pairs $(S, \La)$
with $0 \in S$ and $0 \in \La$.
\end{lem}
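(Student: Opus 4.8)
The plan is to exploit the fact that both properties \emph{spectral} and \emph{tile} are invariant under translation, so that we may freely shift a spectral pair until both of its constituent sets contain the origin, and then invoke the hypothesis.

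Concretely, fix an arbitrary spectral pair $(S,\La)$ of $G$ and choose any $s_0\in S$ and $\la_0\in\La$. I would first show that $(S-s_0,\La-\la_0)$ is again a spectral pair, now with $0\in S-s_0$ and $0\in\La-\la_0$. The cardinality condition $\abs{S-s_0}=\abs{S}=\abs{\La}=\abs{\La-\la_0}$ is immediate. For the orthogonality relation \eqref{orthogonality}, observe that for any character $\chi$ of $G$ and any $\la,\la'\in\La$,
\[
\sum_{s\in S-s_0}\chi_{\la-\la_0}(s)\,\ol{\chi_{\la'-\la_0}(s)}
=\ol{\chi_{\la-\la'}(s_0)}\sum_{s\in S}\chi_{\la}(s)\,\ol{\chi_{\la'}(s)},
\]
because the factors $\chi_{\la_0}$ and $\ol{\chi_{\la_0}}$ coming from the shift of the spectrum cancel pointwise, while the translation of $S$ by $-s_0$ only introduces the unimodular constant $\ol{\chi_{\la-\la'}(s_0)}$. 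Hence the left-hand side is $\abs{S}\delta_{\la\la'}$ precisely when the right-hand side is, so \eqref{orthogonality} is preserved. (In the cyclic case this is just the identity $m_{S-s_0}(\xi_N^{-d})=\xi_N^{ds_0}m_S(\xi_N^{-d})$, giving $Z(S-s_0)=Z(S)$, together with $(\La-\la_0)-(\La-\la_0)=\La-\La$, so that \eqref{spectralitymask} for $(S-s_0,\La-\la_0)$ reduces to that for $(S,\La)$.)

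Next I would record that tiling is likewise translation invariant: if $S\oplus T=G$ then $(S-s_0)\oplus(T+s_0)=G$, and conversely. Combining these two observations gives the lemma: assuming the Spectral $\Rightarrow$ Tile direction holds for all spectral pairs having $0$ in both coordinates, take an arbitrary spectral pair $(S,\La)$, pass to $(S-s_0,\La-\la_0)$, deduce that $S-s_0$ tiles $G$, and conclude that $S=(S-s_0)+s_0$ tiles $G$ as well. There is no genuine obstacle here; the only point requiring care is that translating the spectrum $\La$ — as opposed to translating $S$ — leaves the inner products $\scal{e_\la,e_{\la'}}_S$ entirely unchanged, which is exactly the pointwise cancellation of $\chi_{\la_0}$ displayed above.
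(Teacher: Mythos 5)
Your proof is correct. The paper itself does not prove this lemma --- it is quoted without proof from \cite{KMSV2018} --- but the argument you give (translation invariance of both the spectral property and the tiling property, with the shift of $S$ contributing only the unimodular factor $\ol{\chi_{\la-\la'}(s_0)}$ to the inner products and the shift of $\La$ cancelling pointwise) is exactly the standard one, and all the details you supply check out.
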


\begin{lem}\label{propgenerate}
Let $G$ be a finite abelian group and $S$ a spectral set in $G$,
that does not generate $G$. Assume that for every proper subgroup
$H$ of $G$ we have $\mathbf{S-T}(H)$. Then $S$ tiles $G$.
\end{lem}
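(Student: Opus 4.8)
The plan is to reduce the problem on $G$ to a tiling problem on the proper subgroup $H = \langle S\rangle$ generated by $S$, where by hypothesis we already know $\mathbf{S\text{-}T}(H)$. First I would invoke Lemma \ref{zero} to assume without loss of generality that $0 \in S$; then $S \subseteq H$ where $H := \langle S\rangle$ is a proper subgroup of $G$. The key observation is that the spectral property is inherited by $S$ as a subset of the smaller group $H$: if $(S,\La)$ is a spectral pair in $G$, then projecting the spectrum $\La$ onto the dual of $H$ (equivalently, restricting characters of $G$ to $H$, or passing to $\La$ modulo the annihilator $H^{\perp}$ of $H$) produces a set $\La'$ of the same cardinality $|S|$ that is a spectrum for $S$ inside $H$. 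One must check that distinct characters in $\La$ remain distinct on $H$ and still satisfy the orthogonality relations \eqref{orthogonality}; this follows because the inner products $\scal{e_{\la},e_{\la'}}_S$ only see the values of the characters on $S \subseteq H$, so they are unchanged under restriction, and in particular two characters agreeing on $H$ would have inner product $|S| \neq 0$, forcing them to be equal in $\La$.

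Once $S$ is known to be spectral in $H$, the hypothesis $\mathbf{S\text{-}T}(H)$ gives a tiling complement $T_0 \subseteq H$ with $S \oplus T_0 = H$. The final step is to lift this to a tiling of $G$: since $H$ is a subgroup, choosing a complete set $R$ of coset representatives of $H$ in $G$ (so that $H \oplus R = G$ as a direct sum of sets, because $H$ is a subgroup) yields $S \oplus (T_0 \oplus R) = (S \oplus T_0) \oplus R = H \oplus R = G$. Hence $T := T_0 + R$ is a tiling complement of $S$ in $G$, so $S$ tiles $G$, as claimed.

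The only genuinely delicate point is the first reduction — verifying that a spectrum for $S$ in the ambient group $G$ descends to a spectrum of the same size for $S$ viewed inside $H = \langle S\rangle$. Everything after that is bookkeeping with cosets. I expect the cleanest way to phrase the descent is via the natural projection $\Z_N \to \Z_m$ described in the Preliminaries (applied when $G = \Z_N$ and $H = \Z_m$ with $m \mid N$): the mask polynomial $m_S$ already lies in $\Z[x]$ and is supported on $H$, and the zero set $Z(S)$ together with the containment \eqref{divclass} shows that the difference set $\La - \La$ meets each coset of $H^{\perp}$ in a controlled way, so that $\La$ maps injectively modulo $H^{\perp}$ onto a valid spectrum of $S$ in $H$. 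With that in hand the conclusion is immediate.
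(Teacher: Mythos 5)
Your argument is correct: restricting the characters $e_{\la}$, $\la\in\La$, to the proper subgroup $H=\langle S\rangle\supseteq S$ is injective on $\La$ (two characters agreeing on $H$ would have $\scal{e_{\la},e_{\la'}}_S=\abs{S}\neq0$) and preserves orthogonality, so $S$ is spectral in $H$, tiles $H$ by $\mathbf{S-T}(H)$, and the tiling extends to $G$ via coset representatives since $H\oplus R=G$. The paper itself states this lemma without proof, quoting it from \cite{KMSV2018}, and your argument is essentially the standard one given there; the closing paragraph about projections and \eqref{divclass} is unnecessary, as the character-restriction argument already settles the only delicate point.
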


\begin{lem}\label{lem1} Let $N$ be a natural number and suppose that $\mathbf{S-T}(\Z_{N}/H)$ holds for every $1 \neq H\le \Z_N$.
Assume that  $(S, \La)$ is a spectral pair and $\La$ is not primitive.
Then $S$ tiles $\Z_{N}$.
\end{lem}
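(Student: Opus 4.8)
The plan is to exploit the non-primitivity of $\La$ to descend to a proper quotient of $\Z_N$, apply the hypothesis there, and then lift a tiling complement back up. First I would use Lemma \ref{zero} to assume $0\in S$ and $0\in\La$, so that $\La$ is an honest subset containing the origin. Saying that $\La$ is not primitive means that $\La$ is contained in a proper subgroup $K<\Z_N$, or more precisely that the differences $\La-\La$ generate a proper subgroup; write $K$ for the subgroup generated by $\La-\La$, so $\La$ lies in a single coset of $K$, and after translating (which does not affect spectrality) we may take $\La\subseteq K$. Let $H$ be a subgroup with $1\neq H\le\Z_N$ chosen so that $K$ is the annihilator of $H$ under the identification of $\Z_N$ with its dual; equivalently, every $\la\in\La$ is trivial on $H$, i.e. $\xi_N^{\la h}=1$ for all $h\in H$. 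The point is that the characters indexed by $\La$ all factor through the quotient map $\pi:\Z_N\to\Z_N/H$.

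The key step is then to show that $(\pi(S),\La)$ is a spectral pair in $\Z_N/H$, where $\pi(S)$ is taken as a multiset. Because each $e_\la$ is constant on $H$-cosets, the orthogonality relations \eqref{orthogonality} over $S$ translate directly into the corresponding relations over the multiset $\pi(S)$ in $\Z_N/H$: for $\la\neq\la'$ in $\La$ we get $\sum_{\bar s\in\pi(S)}\pi(S)(\bar s)\,e_{\la}(\bar s)\overline{e_{\la'}(\bar s)}=\sum_{s\in S}e_\la(s)\overline{e_{\la'}(s)}=0$, and the norm computation gives $|S|=|\La|$. Here one has to be slightly careful: a priori $\pi(S)$ is a multiset, but the orthogonality of $|\La|$ exponentials forces $\pi(S)$ to behave like a genuine set of size $|\La|$ — indeed if $\pi$ were not injective on $S$ then $|\pi(S)|<|S|=|\La|$ as a set, contradicting that $|\La|$ pairwise-orthogonal (hence linearly independent) functions live on a support of fewer than $|\La|$ points. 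So $\pi$ is injective on $S$, $|\pi(S)|=|\La|$, and $(\pi(S),\La)$ is a genuine spectral pair in $\Z_N/H$. Since $H\neq 1$, the hypothesis $\mathbf{S-T}(\Z_N/H)$ applies, so $\pi(S)$ tiles $\Z_N/H$: there is $\bar T\subseteq\Z_N/H$ with $\pi(S)\oplus\bar T=\Z_N/H$.

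Finally I would lift the tiling. Let $T_0\subseteq\Z_N$ be a set of coset representatives of $H$ projecting bijectively onto $\bar T$, and set $T=T_0\oplus H$ (this is a direct sum since $T_0$ is a transversal-type set). I claim $S\oplus T=\Z_N$. Counting, $|S|\,|T|=|\La|\cdot|\bar T|\cdot|H|=|\Z_N/H|\cdot|H|=|\Z_N|$, so it suffices to check that $s+t_0+h=s'+t_0'+h'$ with $s,s'\in S$, $t_0,t_0'\in T_0$, $h,h'\in H$ forces equality of all parts. Projecting to $\Z_N/H$ gives $\pi(s)+\pi(t_0)=\pi(s')+\pi(t_0')$, and since $\pi(S)\oplus\bar T=\Z_N/H$ this yields $\pi(s)=\pi(s')$ and $\pi(t_0)=\pi(t_0')$; injectivity of $\pi$ on $S$ gives $s=s'$, and $t_0=t_0'$ since $T_0$ maps bijectively to $\bar T$; then $h=h'$ follows. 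Hence $S\oplus T=\Z_N$ and $S$ tiles $\Z_N$.

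The main obstacle, and the only genuinely non-formal point, is the passage to the quotient being a \emph{spectral} pair rather than merely a multiset identity: one must argue that $\pi$ restricted to $S$ is injective, which is where the orthogonality of the full system of $|\La|$ exponentials is used in an essential way. Everything else — the reduction via Lemma \ref{zero}, the translation of $\La$ into a subgroup, and the lifting of the tiling complement through $H$ — is routine bookkeeping with cosets and cardinalities.
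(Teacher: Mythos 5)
Your proof is correct. The paper states this lemma without proof (it is imported from \cite{KMSV2018}), and your argument --- reducing to the quotient $\Z_N/H$ where $H$ is the annihilator of the proper subgroup generated by $\La$, using the linear independence of the $|\La|$ pairwise orthogonal exponentials to force injectivity of the projection on $S$, and then lifting the tiling complement through $H$ --- is essentially the standard proof of this reduction given in that reference.
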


\begin{lem}\label{propcsakzpeltolt}
Let $N$ be a natural number, $S$ a spectral set in $\Z_N$ and $p$ a prime divisor
of $N$. Assume that
$\mathbf{S-T}(\Z_{\frac{N}{p}})$. If $S$ is the union of
$\mathbb{Z}_p$-cosets, then $S$ tiles $\mathbb{Z}_{N}$.
\end{lem}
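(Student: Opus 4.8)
The plan is to push the problem down to the quotient $\Z_{N/p}$, invoke $\mathbf{S-T}(\Z_{N/p})$ there, and pull the tiling complement back up. Let $H\le\Z_N$ be the unique subgroup of order $p$ and let $\pi\colon\Z_N\to\Z_N/H\cong\Z_{N/p}$ be the quotient map; since $S$ is a union of $H$-cosets we have $S=\pi^{-1}(\bar S)$ for the set $\bar S:=\pi(S)\subseteq\Z_{N/p}$, and $|S|=p|\bar S|$. I would first show that $\bar S$ is spectral in $\Z_{N/p}$. Granting that, $\mathbf{S-T}(\Z_{N/p})$ yields $\bar T$ with $\bar S\oplus\bar T=\Z_{N/p}$; choosing any $T\subseteq\Z_N$ on which $\pi$ is injective with image $\bar T$, one checks $S\oplus T=\Z_N$: the cardinalities multiply to $|S|\,|T|=p\,|\bar S|\,|\bar T|=p\cdot(N/p)=N$, and any coincidence $s_1+t_1=s_2+t_2$ projects under $\pi$ to a coincidence in $\Z_{N/p}$, which forces $\pi(s_1)=\pi(s_2)$ and $\pi(t_1)=\pi(t_2)$, hence $t_1=t_2$ (as $\pi|_T$ is injective) and then $s_1=s_2$; a packing of the right cardinality in a finite group is a tiling.

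To produce a spectrum for $\bar S$, start from a spectrum $\La$ of $S$, so $|\La|=|S|=p|\bar S|$ and $\La-\La\subseteq\{0\}\cup Z(S)$. Since $\bs1_S=\bs1_{\bar S}\circ\pi$, a direct geometric-series computation of $\hat{\bs1}_S$ gives $\hat{\bs1}_S(d)=0$ whenever $p\nmid d$, while for $d=p\bar d\in H^{\perp}=p\Z_N$ (identified with the dual of $\Z_{N/p}$ via $\bar d\mapsto p\bar d$) one gets $\hat{\bs1}_S(p\bar d)=p\,\hat{\bs1}_{\bar S}(\bar d)$. Thus on $p\Z_N$ the zero set $Z(S)$ is precisely the pullback of $Z(\bar S)$, and $Z(S)$ already contains everything outside $p\Z_N$.

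Next I would use duality. By Lemma~\ref{duality} the pair $(\La,S)$ is also spectral, so $S-S\subseteq\{0\}\cup Z(\La)$; since $S$ contains a whole $H$-coset, $H\subseteq S-S$, hence $H\setminus\{0\}\subseteq Z(\La)$. Evaluating $\hat{\bs1}_\La$ on the elements $j\cdot(N/p)$ of $H$ gives $\sum_{\la\in\La}\xi_p^{-j\la}=0$ for $j=1,\dots,p-1$, so $\La$ is equidistributed modulo $p$: each residue class mod $p$ meets $\La$ in exactly $|\La|/p=|\bar S|$ points. In particular $|\La\cap p\Z_N|=|\bar S|$. Now set $\bar\La:=\{\mu\in\Z_{N/p}:p\mu\in\La\}$, so $|\bar\La|=|\La\cap p\Z_N|=|\bar S|$. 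For distinct $\mu_1,\mu_2\in\bar\La$ the difference $p\mu_1-p\mu_2$ is a nonzero element of $(\La-\La)\cap p\Z_N\subseteq Z(S)\cap p\Z_N$, which by the previous paragraph means $\mu_1-\mu_2\in Z(\bar S)$; as $\bar S$ is real-valued, $Z(\bar S)$ is symmetric, so $\hat{\bs1}_{\bar S}(\mu_2-\mu_1)=0$ as well. Hence the exponentials $\{e_{\mu}\}_{\mu\in\bar\La}$ are pairwise orthogonal and nonzero on $\bar S$, there are $|\bar S|=\dim\ell^2(\bar S)$ of them, and therefore $(\bar S,\bar\La)$ is a spectral pair, as required.

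The argument has only two pieces of actual content: that $H$-periodicity of $S$ makes $\hat{\bs1}_S$ supported on $p\Z_N$ and proportional there to $\hat{\bs1}_{\bar S}$, and that $H\subseteq S-S$ forces $\La$ to be equidistributed mod $p$ (so that $\La\cap p\Z_N$ has the right size to yield a spectrum for $\bar S$); everything else is routine lift-and-descend bookkeeping. I expect the main—still minor—nuisance to be carrying out these identifications carefully when $p^2\mid N$, since then $H$ has no complement in $\Z_N$ and one cannot write $\Z_N=H\times\Z_{N/p}$; working throughout with the quotient map $\pi$ and its adjoint $\bar d\mapsto p\bar d$ avoids this. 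No appeal to the cube rule or to the arithmetic of $N$ is needed.
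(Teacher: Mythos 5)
Your proof is correct. Note that the paper itself gives no proof of Lemma \ref{propcsakzpeltolt}: it is imported verbatim from \cite{KMSV2018}, so there is no in-paper argument to compare against. Your route --- descend to the quotient $\Z_N/H\cong\Z_{N/p}$, show $\bar S=\pi(S)$ is spectral there, tile by $\mathbf{S-T}(\Z_{N/p})$, and lift the complement through a transversal of $\pi$ --- is the natural one, and it is precisely dual to the in-paper proof of Lemma \ref{lazpcos} (where $\La$, rather than $S$, is a union of $\Z_p$-cosets and one decomposes $S$ into residue classes mod $p$). The two nontrivial steps both check out: $H$-periodicity of $\bs1_S$ forces $\hat{\bs1}_S$ to vanish off $p\Z_N$ and to equal $p\,\hat{\bs1}_{\bar S}$ on it, and the duality $H\setminus\{0\}\subseteq Z(\La)$ gives $\Phi_p(x^{N/p})\mid m_\La$, i.e.\ equidistribution of $\La$ mod $p$, so that $\bar\La=\{\mu: p\mu\in\La\}$ has exactly $|\bar S|$ elements and its differences land in $Z(\bar S)$. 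Your care with the identification $\bar d\mapsto p\bar d$ when $p^2\mid N$ is warranted and correctly handled; the counting-plus-injectivity argument for $S\oplus T=\Z_N$ is also complete.
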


\begin{lem}\label{lazpcos}
Let $(S, \La)$ be a spectral pair in $\Z_N$, where $N=p \cdot p_1\dotsm p_k$ is a square-free integer
and ${\bf S-T}(\Z_{\frac{N}{p}})$ holds. Suppose that $\La$ is the union of $\Z_p$-cosets. Then $S$ is a tile.
\end{lem}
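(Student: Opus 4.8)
The plan is to exploit that this statement is the ``dual'' of Lemma~\ref{propcsakzpeltolt} and to run the same kind of fibering argument, but now on $S$ rather than on the coset-saturated set. By Lemma~\ref{duality}, $(\Lambda,S)$ is again a spectral pair, so $\Lambda$ is spectral with spectrum $S$; equivalently, $S-S\subseteq\{0\}\cup Z(\Lambda)$. Let $\pi\colon\Z_N\to\Z_{N/p}$ be the natural projection, whose kernel is the subgroup of order $p$; since $\Lambda$ is a union of cosets of $\ker\pi$, we may write $\Lambda=\pi^{-1}(\overline\Lambda)$ for some set $\overline\Lambda\subseteq\Z_{N/p}$.

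Next I would analyse $S$ through the subgroup $H\le\Z_N$ of order $N/p$. As $N$ is square-free, $H\cap\ker\pi=\{0\}$, so $\Z_N=H\oplus\ker\pi$, there are exactly $p$ cosets of $H$, and $\pi$ restricts to a bijection $\pi|_{i+H}\colon i+H\to\Z_{N/p}$ on each coset. Write $S=\bigsqcup_i S_i$ with $S_i=S\cap(i+H)$, and set $\overline S_i:=\pi(S_i)\subseteq\Z_{N/p}$, so $|\overline S_i|=|S_i|$. From $\Lambda=\pi^{-1}(\overline\Lambda)$ one computes that $\widehat{\bs1}_\Lambda(y)=0$ for every $y\notin H$, and that for $y\in H$ the coefficient $\widehat{\bs1}_\Lambda(y)$ vanishes precisely when the corresponding Fourier coefficient of $\overline\Lambda$ does (under the isomorphism $\pi|_H\colon H\to\Z_{N/p}$). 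Substituting into $S-S\subseteq\{0\}\cup Z(\Lambda)$: the constraints coming from two elements of $S$ lying in distinct $H$-cosets are vacuous, while those coming from two elements inside a single $S_i$ amount exactly to $\overline S_i-\overline S_i\subseteq\{0\}\cup Z(\overline\Lambda)$. Hence the $|\overline S_i|$ functions $\lambda\mapsto\xi_{N/p}^{\overline s\lambda}$, $\overline s\in\overline S_i$, are pairwise orthogonal on $\overline\Lambda$, so $|\overline S_i|\le|\overline\Lambda|$; since $\sum_i|\overline S_i|=|S|=|\Lambda|=p\,|\overline\Lambda|$, we get $|\overline S_i|=|\overline\Lambda|$ for every $i$, and therefore $(\overline S_i,\overline\Lambda)$ is a spectral pair in $\Z_{N/p}$. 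In particular $\overline\Lambda$ and each $\overline S_i$ are spectral in $\Z_{N/p}$, hence, by $\mathbf{S-T}(\Z_{N/p})$, all of them tile $\Z_{N/p}$.

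Suppose now that a single set $\overline T\subseteq\Z_{N/p}$ satisfies $\overline S_i\oplus\overline T=\Z_{N/p}$ for all $i$, and let $T:=(\pi|_H)^{-1}(\overline T)\subseteq H$. For each $i$ we then have $S_i+T\subseteq i+H$, $\pi(S_i+T)=\overline S_i+\overline T=\Z_{N/p}$ and $|S_i|\,|T|=|\overline\Lambda|\,|\overline T|=N/p=|i+H|$; since $\pi|_{i+H}$ is a bijection onto $\Z_{N/p}$, this forces $S_i\oplus T=i+H$. Summing over the $p$ cosets of $H$ yields $S\oplus T=\Z_N$, so $S$ is a tile, as claimed.

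The crux — and the place where square-freeness together with the hypothesis $\mathbf{S-T}(\Z_{N/p})$ (with $\Z_{N/p}$ a product of at most three primes, hence governed by Theorem~\ref{discreteFuglederesults}) is genuinely used — is the existence of a common tiling complement $\overline T$ for the spectra $\overline S_0,\dots,\overline S_{p-1}$ of the single tile $\overline\Lambda$ in $\Z_{N/p}$. One expects the cyclotomic-divisibility structure of tilings in $\Z_{N/p}$ to be rigid enough to pin down, uniformly in $i$, the divisors of $m_{\overline S_i}$ relevant to tiling (for instance the primes $p'$ with $\Phi_{p'}\mid m_{\overline S_i}$ should be exactly those dividing $|\overline\Lambda|$, independently of $i$ and agreeing with $\overline\Lambda$), and from this to extract a $\overline T$ serving all the $\overline S_i$ at once — for example a fixed subgroup of $\Z_{N/p}$ of order $(N/p)/|\overline\Lambda|$ of which every $\overline S_i$ is a complete system of coset representatives, or, if one can show the $\overline S_i$ are translates of one another, any tiling complement of $\overline S_0$. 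Establishing this common complement rigorously is the main obstacle; granting it, the previous paragraph concludes.
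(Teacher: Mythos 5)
Your reduction is essentially the paper's own: you fiber $S$ over the cosets of the subgroup $H$ of order $N/p$ (equivalently, over residues mod $p$), show via the dual orthogonality relation $S-S\subseteq\{0\}\cup Z(\La)$ and a dimension count that each fiber $\overline S_i$ has cardinality exactly $|\overline\La|$ and forms a spectral pair with $\overline\La$ in $\Z_{N/p}$, invoke $\mathbf{S-T}(\Z_{N/p})$ to make each fiber a tile, and then glue a common complement back up to $\Z_N$. All of that is correct and matches the paper (which phrases the fibering through the factorization $m_{\La}=m_{\La'}\Phi_p(x^{N/p})$ and the observation that $\Phi_m\mid m_{\La}$ with $p\nmid m$ forces $\Phi_m\mid m_{\La'}$, but the content is the same).

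The genuine gap is the one you flag yourself: the existence of a \emph{common} tiling complement $\overline T$ for all $p$ fibers $\overline S_0,\dots,\overline S_{p-1}$. Knowing only that each $\overline S_i$ tiles $\Z_{N/p}$ does not by itself produce a single $\overline T$ working for every $i$, and without it the gluing paragraph does not start. The paper closes exactly this hole by citing Proposition 4.1 of \cite{Shi18}: since $N/p$ is square-free, every tile of $\Z_{N/p}$ of cardinality $m$ is a complete set of coset representatives of the (unique) subgroup of order $(N/p)/m$. Applied to each $\overline S_i$ (all of the same cardinality $|S|/p$), this shows the fixed subgroup of order $N/|S|$ is simultaneously a tiling complement of every fiber, which is precisely the first of the two candidate resolutions you sketch. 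So your proof is not wrong, but it is incomplete at its decisive step; to finish it you need this structural rigidity of tiles in square-free cyclic groups (a nontrivial input, not a formal consequence of $\mathbf{S-T}(\Z_{N/p})$), either by citation or by reproving it.
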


\begin{proof}
Let $\Lambda':= \Lambda \cap$  $ \Z_{\frac{N}{p}}$.
The mask polynomial $m_{\Lambda'}$ has degree at most $\frac{N}{p}-1$ and $m_{\Lambda'}(1)=|\Lambda'|= \frac{|\Lambda|}{p}$ as $\La$ is the union of $\Z_p$-cosets. Note that the mask polynomial of the $\Z_p$-coset containing 0 is $\Phi_p(x^{\frac{N}{p}})$. Since $\Lambda$ is the union of $\Z_p$-cosets we have
$\Phi_p(x^{\frac{N}{p}}) \mid m_{\Lambda}$. Moreover $\La = \La' \oplus \Z_p$ implies
\begin{equation}\label{eqla'}
    m_{\Lambda}=m_{\Lambda'}\Phi_p(x^{\frac{N}{p}}).
\end{equation}

Since $\Lambda$ is the union of $\Z_p$-cosets, we have $\Phi_p \mid m_S$, so $S$ is equidistributed on the $\Z_{\frac{N}{p}}$-cosets.
Note that, if $|S|=|\La|=p$, then every $\Z_{\frac{N}{p}}$-coset contains exactly one element of $S$, hence $S$ is a tile. Thus, we can assume that $|\La|=|S|>p$.
Let $S_i=\{x \in S \mid x \equiv i \pmod{p} \}$. Hence $|S_i|=\frac{|S|}{p}=\frac{|\La|}{p}=|\Lambda'|>2$ holds.

As $(\La,S)$ is a spectral pair this implies that  for any $s_1, s_2\in \Z_N$ we get that
$\Phi_m\mid m_{\La}$, where $m$
is the order of $s_1-s_2$.
Since $N$ is square-free, we have $p\nmid m$ for $s_1-s_2\in S_i$, $s_1\ne s_2$ and $1\le i\le n$. This implies that $\Phi_m\nmid\Phi_p(x^{\frac{N}{p}})$ hence, by \eqref{eqla'}, we obtain $\Phi_m\mid m_{\La'}$.
By $|S_i|=|\La'|$, it follows that $S_i-i$ is a spectrum of $\La'$ for every $0 \le i \le p-1$.

Then by assumption we have that $S_i-i$ is a tile on $\Z_{\frac{N}{p}}$. As $N$ is square-free, by Proposition 4.1 of \cite{Shi18}, we have that $S_i-i$ is a set of coset representatives of
$\Z_{\frac{N}{p|S_i|}}$ in $\Z_{\frac{N}{p}}$.
Therefore, the subgroup $T=\Z_{\frac{N}{|S|}}$ is a common tiling complement of $S_i-i$ in $\Z_{\frac{N}{p}}$ for each $S_i$, hence it is a tiling complement for $S$ in $\Z_N$, finishing the proof of the lemma.

\end{proof}
\noindent
By \cite{Shi18}, we have that ${\bf S-T}(\Z_{pqr})$ holds, where $p,q,r$ are different primes. Combining with Lemma \ref{lazpcos} we get the following.

\begin{cor}
Let $(S, \La)$ be a spectral pair in $\Z_N$, where $N=pqrs$ with $p,q,r,s$ different prime numbers. Suppose that $\La$ is the union of $\Z_p$-cosets. Then $S$ is a tile.
\end{cor}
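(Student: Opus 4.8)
The plan is to obtain this corollary as a direct specialization of Lemma~\ref{lazpcos} to the four-prime setting, the only extra ingredient being the identification of the scope of Shi's theorem. Concretely, write $N = pqrs$ and observe that $N/p = qrs$ is a product of three \emph{distinct} primes, so $\Z_{N/p} \cong \Z_{qrs}$. By the result of Shi \cite{Shi18}, the Spectral $\Rightarrow$ Tile direction holds for every cyclic group whose order is a product of three distinct primes; applying this to the prime triple $\{q,r,s\}$ gives $\mathbf{S-T}(\Z_{N/p})$.

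With this in hand, the two hypotheses of Lemma~\ref{lazpcos} are satisfied: $N = p\cdot(qrs)$ is square-free, $\mathbf{S-T}(\Z_{N/p})$ holds, and by assumption $\La$ is a union of $\Z_p$-cosets. Lemma~\ref{lazpcos} then yields at once that $S$ tiles $\Z_N$, which is the assertion of the corollary. Internally, this is exactly the construction carried out in the proof of Lemma~\ref{lazpcos}: setting $\La' = \La\cap\Z_{N/p}$, each fiber $S_i - i$ is shown to be a spectrum of $\La'$ in $\Z_{N/p}$, hence a tile there by $\mathbf{S-T}(\Z_{qrs})$, hence a set of coset representatives of $\Z_{N/(p|S_i|)}$ in $\Z_{N/p}$ by Proposition~4.1 of \cite{Shi18}, so that the common subgroup $\Z_{N/|S|}$ serves as a tiling complement for $S$ in $\Z_N$.

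Since essentially all of the argument is packaged in Lemma~\ref{lazpcos}, there is no genuine obstacle here; the only point requiring care is the bookkeeping that $qrs$ with $q,r,s$ distinct does fall under the hypotheses of \cite{Shi18}, which is immediate. One could also phrase the proof slightly more symmetrically by noting that the same reasoning applies with $p$ replaced by any of the four primes, but for the statement as given the single application above suffices.
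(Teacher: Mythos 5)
Your proof is correct and matches the paper's own argument exactly: the paper derives this corollary by observing that $\mathbf{S-T}(\Z_{qrs})$ holds by Shi's result \cite{Shi18} and then applying Lemma~\ref{lazpcos} with $N = p\cdot qrs$. Your additional recap of the internal mechanism of Lemma~\ref{lazpcos} is accurate but not needed beyond the citation.
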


\begin{lem}\label{manyprimes}
Let $N=p_1p_2\dotsm p_k$ be a square-free integer, and let $S\ssq\Z_N$ be a spectral set whose cardinality
is divisible by at least $k-1$
primes among $p_1,\dotsc,p_k$. Then $S$ tiles $\Z_N$.
\end{lem}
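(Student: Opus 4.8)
The plan is to produce a subgroup as a tiling complement of $S$. Write $N=p_1\cdots p_k$. If $|S|$ is divisible by all $k$ of the primes $p_i$, then $N\mid|S|\le N$, so $S=\Z_N$, which trivially tiles; hence I may assume, after relabelling, that $p_1\cdots p_{k-1}\mid|S|$ while $p_k\nmid|S|$. Let $H:=\frac{N}{p_k}\Z_N$ be the (unique) subgroup of $\Z_N$ of order $p_k$, so that $[\Z_N:H]=N/p_k=p_1\cdots p_{k-1}$. I claim that $S$ is a transversal of the cosets of $H$, that is, $S\oplus H=\Z_N$.

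Pick a spectrum $\Lambda$ of $S$. By Lemma \ref{duality}, $(\Lambda,S)$ is again a spectral pair, so applying \eqref{spectrality} to it gives $S-S\subseteq\{0\}\cup Z(\Lambda)$, where $Z(\Lambda)=\{d\in\Z_N:m_\Lambda(\xi_N^{-d})=0\}$. The heart of the argument is that $Z(\Lambda)\cap H=\varnothing$. Indeed, if $0\neq d\in H$, then $\zeta:=\xi_N^{-d}$ is a primitive $p_k$-th root of unity and, grouping the exponents modulo $p_k$,
\[ m_\Lambda(\xi_N^{-d})=\sum_{\lambda\in\Lambda}\zeta^{\lambda}=\sum_{j=0}^{p_k-1}a_j\,\zeta^{j},\qquad a_j:=\#\{\lambda\in\Lambda:\lambda\equiv j\pmod{p_k}\}. \]
Since the minimal polynomial of $\zeta$ over $\Q$ is $1+x+\dots+x^{p_k-1}$, this sum can vanish only if $a_0=a_1=\dots=a_{p_k-1}$, i.e.\ only if $p_k\mid|\Lambda|$; but $|\Lambda|=|S|$ is not divisible by $p_k$, so $m_\Lambda(\xi_N^{-d})\neq0$, proving the claim.

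Consequently $(S-S)\cap H\subseteq(\{0\}\cup Z(\Lambda))\cap H=\{0\}$, so the quotient homomorphism $\Z_N\to\Z_N/H$ is injective on $S$; in particular $|S|\le[\Z_N:H]=N/p_k$. Together with $p_1\cdots p_{k-1}=N/p_k\mid|S|$ this forces $|S|=N/p_k$, so $S$ maps bijectively onto $\Z_N/H$ and thus meets every coset of $H$ exactly once. Hence $S\oplus H=\Z_N$ and $S$ tiles $\Z_N$. I do not anticipate a serious obstruction here; the only point that requires the right perspective is to use spectrality ``in the reverse direction'' via Lemma \ref{duality}, so that the cardinality hypothesis on $|S|=|\Lambda|$ comes into play — it guarantees precisely that $\Lambda$ is not equidistributed modulo $p_k$, which is what keeps $H$ out of the forbidden set $Z(\Lambda)$.
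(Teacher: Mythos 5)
Your proof is correct and follows essentially the same route as the paper: reduce to the case $p_1\cdots p_{k-1}\mid|S|$, $p_k\nmid|S|$, use the fact that $p_k\nmid|\Lambda|$ forces $m_\Lambda$ to be nonzero at primitive $p_k$-th roots of unity, deduce $(S-S)\cap\frac{N}{p_k}\Z_N=\{0\}$, and conclude $|S|=N/p_k$ so that $S$ is a complete set of coset representatives of $\frac{N}{p_k}\Z_N$. The only difference is that you spell out the duality step and the equidistribution argument that the paper leaves implicit.
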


\begin{proof}
If $N\mid\abs{S}$, then $S=\Z_N$ and there is nothing to prove. So, without loss of generality
we can suppose that $p_1\dots p_{k-1}\mid\abs{S}$ and $p_k\nmid\abs{S}$. Let $\La\ssq\Z_N$ be a spectrum of $S$. Since
$p_k\nmid |S|=\abs{\La}$, we must have $m_{\La}(\ze_{p_k})\neq0$, therefore \eqref{spectralitymask} yields
\[(S-S)\cap\frac{N}{p_k}\Z_N=\set{0}.\]
Thus, every element of $S$ is unique $\bmod ~\frac{N}{p_k}$, yielding $\abs{S}\leq p_1\dotsm p_{k-1}$, and combined
with $p_1\dotsm p_{k-1}\mid\abs{S}$ we finally obtain
\[\abs{S}=p_1\dotsm p_{k-1}.\]
Therefore, $S$ consists of a complete set of coset representatives modulo the subgroup $\frac{N}{p_k}\Z_N$, and so this
subgroup is exactly a tiling complement of $S$.
\end{proof}

\noindent
Lemma \ref{manyprimes} immediately implies the following.

\begin{cor} \label{rsndiv}
Let $N=pqrs$ with different primes $p,q,r,s$ and $(S, \La)$ a spectral pair of $\Z_N$. Then, in order to prove ${\bf S - T}( \Z_{pqrs})$, we can assume that $r\nmid |S|$ and $s\nmid |S|$ (otherwise we are done by Lemma \ref{manyprimes}). This automatically implies that $\Phi_r, \Phi_s\nmid m_S, m_{\La}$.
\end{cor}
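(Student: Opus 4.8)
This corollary is a purely bookkeeping consequence of Lemma \ref{manyprimes} together with an elementary fact about mask polynomials modulo a prime cyclotomic, so my plan has two short steps. For the reduction ``$r\nmid|S|$ and $s\nmid|S|$'', I would count how many of the four prime divisors of $N=pqrs$ can divide $|S|$. If at least three of them divide $|S|$, then $|S|$ is divisible by at least $k-1=3$ of the primes dividing the square-free integer $N$, so Lemma \ref{manyprimes} applies and $S$ already tiles $\Z_{pqrs}$; in that case ${\bf S-T}(\Z_{pqrs})$ holds with nothing more to do. Hence, when proving ${\bf S-T}(\Z_{pqrs})$ we may assume that at most two of $p,q,r,s$ divide $|S|$, i.e.\ at least two of them do not. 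Since the roles of the four primes are symmetric up to relabeling, I would fix names so that the two non-dividing primes are precisely $r$ and $s$, giving $r\nmid|S|$ and $s\nmid|S|$.

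For the last assertion, I would use the standard fact that for a prime $\ell$ one has $\Phi_\ell\mid m_S$ in $\Z[x]$ if and only if $m_S(\zeta_\ell)=0$ for a primitive $\ell$-th root of unity $\zeta_\ell$. Writing $a_j=\#\{x\in S:\ x\equiv j\pmod r\}$ and using that $\zeta_r^x$ depends only on $x\bmod r$, we get $m_S(\zeta_r)=\sum_{j=0}^{r-1}a_j\zeta_r^j$. If this were $0$, then since $\Phi_r$ has degree $r-1$ the only $\Q$-linear relation among $1,\zeta_r,\dots,\zeta_r^{r-1}$ is a scalar multiple of $1+\zeta_r+\dots+\zeta_r^{r-1}=0$, so all $a_j$ would be equal, forcing $r\mid|S|$, contrary to the previous step. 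Hence $\Phi_r\nmid m_S$, and the same argument with $s$ in place of $r$ gives $\Phi_s\nmid m_S$. Finally, $(S,\La)$ being a spectral pair yields $|\La|=|S|$, so $r,s\nmid|\La|$ as well, and repeating the computation with $m_\La$ gives $\Phi_r,\Phi_s\nmid m_\La$.

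There is essentially no obstacle here, since the corollary is a reduction step rather than a substantive result. The only points that merit a moment's care are: (i) the relabeling is legitimate only if the subsequent proof does not assign a distinguished role to $r$ or $s$ beforehand, so this reduction should be invoked at the outset of the main argument; and (ii) in the second step one genuinely uses that $r$ (and $s$) is prime, so that $\Phi_r(x)=1+x+\dots+x^{r-1}$ has degree $r-1$, which is what pins down the linear relation among the powers of $\zeta_r$.
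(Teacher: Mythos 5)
Your proposal is correct and is essentially the argument the paper intends: the paper derives the corollary as an immediate consequence of Lemma \ref{manyprimes} (apply it when at least three of the four primes divide $|S|$, otherwise relabel), and your equidistribution argument for why $\Phi_r,\Phi_s\nmid m_S,m_{\La}$ is the standard justification of the "automatically implies" clause. Nothing is missing.
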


\noindent
We also remind that if the spectral set is small, then it tiles.

\begin{Thm}[\cite
{KM2006}, Theorem 2.1]\label{T6}
Let $S$ be a spectral set in a finite abelian group $G$ with $\abs{S}\leq5$. Then, $S$ tiles $G$.
\end{Thm}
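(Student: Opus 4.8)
The plan is to induct on $|G|$ and to split according to $n:=|S|=|\Lambda|\le 5$, the case $n=1$ being trivial. By Lemma~\ref{zero} we may assume $0\in S$ and $0\in\Lambda$. If $S$ does not generate $G$, then since $\hat{\mathbf 1}_S$ takes the nonzero value $|S|$ on $\langle S\rangle^{\perp}\setminus\{0\}$, relation \eqref{spectrality} shows $\Lambda$ embeds into $\widehat{\langle S\rangle}$, so $S$ is a spectral set of size $\le 5$ in the proper subgroup $\langle S\rangle$, and the inductive hypothesis gives that it tiles $\langle S\rangle$, hence $G$. Dually, if $\Lambda$ does not generate $\widehat G$, then $K:=\langle\Lambda\rangle^{\perp}\ne 0$ and \eqref{spectrality} applied to the pair $(\Lambda,S)$ — spectral by Lemma~\ref{duality} — forces $S$ to be injective modulo $K$, so its image in $G/K$ is a spectral set of size $\le 5$ with spectrum $\Lambda$, tiles $G/K$ by induction, and the tiling lifts to $G$. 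So one reduces to the case $S^{\perp}=\Lambda^{\perp}=\{0\}$. The tool is then the matrix $M:=\bigl(\chi_\lambda(s)\bigr)_{s\in S,\lambda\in\Lambda}$: by \eqref{orthogonality} it satisfies $M^{*}M=nI$, its entries are roots of unity, and it is dephased (as $0\in S\cap\Lambda$), so Haagerup's classification of complex Hadamard matrices of order $\le 5$ applies.

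For $n\in\{2,3,5\}$ the matrix $F_n$ is the unique $n\times n$ complex Hadamard matrix up to equivalence. Writing $M=D_1P_1F_nP_2D_2$ and using $|D_i|\equiv 1$, for $k\ne k'$ the vector $\bigl(\chi_{\lambda_k-\lambda_{k'}}(s_j)\bigr)_j$ is a fixed unit scalar times $\bigl(\zeta_n^{\rho(j)m}\bigr)_j$ with $m\not\equiv 0\pmod n$ and $\rho$ a permutation; since $n$ is prime this runs over a coset of $\mu_n$, and as it contains $1$ (at the index with $s_j=0$) the coset is $\mu_n$ itself. Hence $\chi_{\lambda_k-\lambda_{k'}}|_S$ is a bijection onto $\mu_n$, so $S$ is a complete set of coset representatives of $\ker\chi_{\lambda_k-\lambda_{k'}}$ inside the subgroup $\chi_{\lambda_k-\lambda_{k'}}^{-1}(\mu_n)$, and adjoining a transversal of that subgroup in $G$ yields a tiling complement of $S$. (For $n=2,3$ Haagerup can be avoided: by \eqref{spectrality} a nonzero $d\in\Lambda-\Lambda$ gives a vanishing sum of two, resp.\ three, roots of unity containing $1$, forced to be $\{1,-1\}$, resp.\ $\{1,\omega,\omega^{2}\}$, and the same conclusion follows.)

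The main obstacle is $n=4$, where $4\times4$ complex Hadamard matrices form the Fourier family $F_4(a)$ and no character need detect all of $\mu_4$ on $S$, so a separate structural step is needed. The plan there: since $M$ has root-of-unity entries, after re-indexing $S$, $\Lambda$ (keeping $s_0=0$, $\lambda_0=0$) and absorbing the diagonal factors into the dephasing, take $M=F_4(a)$ with $a$ a root of unity in a presentation symmetric in rows and columns; reading off the multiplicative relations among its columns, $\chi_{\lambda_1}(s)\overline{\chi_{\lambda_3}(s)}=\chi_{\lambda_2}(s)$ and $\chi_{\lambda_1}(s)\chi_{\lambda_3}(s)=\chi_{\lambda_1}(s)^{2}\chi_{\lambda_2}(s)$ for all $s\in S$, together with $S^{\perp}=\{0\}$, forces $\lambda_3=\lambda_1+\lambda_2$ and $2\lambda_2=0$, so $\Lambda=\{0,\lambda_1\}\oplus\langle\lambda_2\rangle$, and dually $S=\{0,s_1\}\oplus\langle s_2\rangle$ with $2s_2=0$. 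Since $|S|=4$ one has $s_2\ne 0$ and $s_1\notin\langle s_2\rangle$, so $H':=\langle s_2\rangle$ is a nontrivial subgroup and $\{0,\bar s_1\}$ has two elements in $G/H'$. The row of $M$ indexed by $s_2$, namely $(1,-1,1,-1)$, shows $\Lambda\cap (H')^{\perp}=\{0,\lambda_2\}$ and that $\bigl(\{0,\bar s_1\},\{0,\lambda_2\}\bigr)$ is a spectral pair in $G/H'$, the orthogonality $1+\chi_{\lambda_2}(s_1)=0$ being an entry of $F_4(a)$. By induction $\{0,\bar s_1\}$ tiles $G/H'$, and as $S=\{0,s_1\}\oplus H'$ this lifts to a tiling of $G$ by $S$, closing the induction. (When $\Lambda$ happens to be a subgroup of $\widehat G$ — e.g.\ $2\lambda_1\in\{0,\lambda_2\}$, so $\Lambda\cong\Z_2^2$ or $\Z_4$ — it is even simpler: $\Lambda^{\perp}$ is a tiling complement of $S$, by the counting $|S\cap(g+H)|=|S|/|\Lambda|=1$.) Thus the rigid orders give a transversal of a kernel at once, and the only genuine work is handling the non-rigid order $n=4$ via the reduction above and the recursion on $G/\langle s_2\rangle$.
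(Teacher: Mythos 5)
The paper does not prove this statement at all: it is imported verbatim as Theorem 2.1 of the cited reference \cite{KM2006} (Kolountzakis--Matolcsi), so there is no internal proof to compare against. Your argument is, in substance, a correct reconstruction of the proof from that source: the reduction to a dephased $n\times n$ complex Hadamard log-matrix $\bigl(\chi_{\lambda}(s)\bigr)$ with $S$ and $\La$ both generating, Haagerup's classification for $n\le 5$, the observation that for prime $n\in\{2,3,5\}$ some difference character maps $S$ bijectively onto $\mu_n$ (giving a transversal of a kernel, hence a tiling), and the separate treatment of $n=4$ via the one-parameter family $F_4(a)$, where the distinguished real row and column force $\La=\{0,\la_1\}\oplus\langle\la_2\rangle$ and $S=\{0,s_1\}\oplus\langle s_2\rangle$ with $2\la_2=2s_2=0$, after which one descends to $G/\langle s_2\rangle$. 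The only point worth flagging is in the $n=4$ case: after permuting and re-dephasing, the distinguished $(1,-1,1,-1)$ row and column need not sit at the indices you first labelled $\la_2$ and $s_2$, so one must argue (as the $F_4(a)$ normal form indeed guarantees, and as a short case check on the signs $\chi_{\la_i}(s_j)=\pm1$ confirms) that \emph{some} order-two $\mu\in\La$ satisfies $\chi_\mu(s_2)=1$ and $\chi_\mu(s_1)=-1$; your phrase ``after re-indexing'' covers this, but it is the one step a careful reader would want spelled out. With that caveat the proof is complete and matches the approach of \cite{KM2006}.
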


\noindent
Let us gather the information above in the following summary.

\begin{summary}\label{summary} Let $N=p_1\ldots p_k$ be a square-free integer and $(S, \La)$ a spectral pair of $\Z_N$.
Any of the following assertions implies that $S$ tiles $\Z_N$.

\medskip

$\bullet$ $S$ or $\La$ does not generate $\Z_N$.

\smallskip

$\bullet$ $S$ or $\La$ is the union of $\Z_{p_i}$-cosets for some $p_i$.

\smallskip

$\bullet$ all $p_i'$s except at most one divide the cardinality of $S$.

\smallskip

$\bullet$ $|S| \le 5$.

\medskip

\noindent  We can also suppose that $0 \in S$ and $0 \in \La$ to prove the Spectral $\Rightarrow$ Tile direction of Fuglede's conjecture.

\end{summary}

\subsection{Technical lemmata for the proof}

\
$\bullet$ Let $N=p_1 \ldots p_k$ be a square-free natural number. For any primitive $N$'th root of unity $\xi_N$ and for an integer $t$ one can uniquely write the complex number $\xi_N^t$ as $\xi_{p_1}^{a_1(t)} \cdot \xi_{p_2}^{a_2(t)} \cdot \ldots \cdot \xi_{p_k}^{a_k(t)},$ where $0 \le a_i(t) \le p_i-1$, for $i=1 \stb k$.

\begin{lem}\label{lemprop1}
Let $N=p_1 \ldots p_k$ be a square-free integer and $B\subset \Z_{N}$ a multiset. Assume $\Phi_{\frac{N}{p_1}} \mid m_B$ and $\Phi_{N} \mid m_B$. Let us denote by $B_j=\{ b \in B  \mid b \equiv j \pmod{p_1} \}$. Consider $B_j$ as a subset of $\Z_{\frac{N}{p_1}}$ by simply identifying $B_j$ with $B_j-j$. Then $\Phi_{\frac{N}{p_1}} \mid m_{B_j}$ for every $j=0 \stb p-1$.
\end{lem}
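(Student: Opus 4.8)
The plan is to pass to mask polynomials, split $m_B$ according to the residue of the exponent modulo $p_1$, and then combine a cyclotomic factorization with an elementary root-counting argument. Throughout set $q:=p_1$ and $M:=N/p_1$, so $N=qM$ and, $N$ being square-free, $\gcd(q,M)=1$. The first step is to expand $m_B$ by collecting the powers of $x$ whose exponent lies in a fixed residue class mod $q$: viewing each $b\in B$ as the integer in $[0,N)$, write $b=j+q\cdot\frac{b-j}{q}$ with $j=b\bmod q\in[0,q)$ and $\frac{b-j}{q}\in[0,M)$, and recall that in the statement $B_j$ is identified with the subset $\{(b-j)/q \bmod M : b\in B_j\}$ of $\Z_M$ (the unique subgroup of order $M$, i.e.\ the multiples of $q$, relabelled by $qt\mapsto t$). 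Grouping the monomials of $m_B$ by the value of $j$ then gives the polynomial identity
\[
 m_B(x)=\sum_{j=0}^{q-1}x^{j}\,m_{B_j}(x^{q})\qquad\text{in }\Z[x],
\]
where the $q$ summands have pairwise disjoint exponent supports. Verifying that the coefficient polynomials produced by this grouping are precisely the $m_{B_j}$ of the statement, under the chosen identification, is the only point that needs care, and it is routine bookkeeping.

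Next I would use a cyclotomic factorization. Since $q$ is prime and $q\nmid M$, the classical identity $\Phi_M(x^{q})=\Phi_{qM}(x)\Phi_M(x)=\Phi_N(x)\Phi_M(x)$ holds. The polynomials $\Phi_N$ and $\Phi_M$ are distinct monic irreducibles, hence coprime in $\Q[x]$; together with the hypotheses $\Phi_N\mid m_B$ and $\Phi_M\mid m_B$ this gives $\Phi_N\Phi_M\mid m_B$ in $\Q[x]$, and then, as $\Phi_N\Phi_M$ is monic with integer coefficients, Gauss's lemma upgrades this to $\Phi_M(x^{q})\mid m_B(x)$ in $\Z[x]$.

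For the conclusion, fix a primitive $M$-th root of unity $\delta$ and let $\rho$ run over the $q$ pairwise distinct complex solutions of $z^{q}=\delta$. For each such $\rho$ one has $\Phi_M(\rho^{q})=\Phi_M(\delta)=0$, so $\rho$ is a root of the polynomial $\Phi_M(x^{q})$, hence a root of $m_B$ by the previous step. Substituting $\rho$ into the identity of the first step and using $\rho^{q}=\delta$ yields
\[
 0=m_B(\rho)=\sum_{j=0}^{q-1}\rho^{j}\,m_{B_j}(\delta).
\]
Thus the polynomial $\sum_{j=0}^{q-1}m_{B_j}(\delta)\,z^{j}$, which has degree less than $q$, vanishes at the $q$ distinct points $\rho$ and so is identically zero: $m_{B_j}(\delta)=0$ for every $j$. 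Since $\delta$ was an arbitrary primitive $M$-th root of unity, $\Phi_M=\Phi_{N/p_1}$ divides $m_{B_j}$ for every $j$, which is exactly the claim. (Equivalently, this last step can be phrased module-theoretically: $\{1,x,\dots,x^{q-1}\}$ is a free basis of $\Z[x]/(\Phi_M(x^{q}))$ over $\Z[x^{q}]/(\Phi_M(x^{q}))$, and the identity of the first step combined with the divisibility of the second forces every coefficient $m_{B_j}(x^{q})$ to vanish modulo $\Phi_M(x^{q})$.)

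I do not anticipate a single deep obstacle. The substance lies in the bookkeeping of the first step and in the observation that \emph{both} hypotheses are genuinely needed: they enter through the factor $\Phi_N\Phi_M$ in the second step, and it is precisely this product that forces $m_B$ to vanish at all $q$ preimages of $\delta$ under $z\mapsto z^{q}$, rather than only at the $q-1$ of them that are primitive $N$-th roots of unity.
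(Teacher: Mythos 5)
Your proof is correct and follows essentially the same route as the paper's: both decompose $m_B$ according to the residue of the exponent modulo $p_1$ and use the two divisibility hypotheses together to force each partial sum $m_{B_j}(\delta)$ to vanish. The only difference is the finishing move: the paper invokes the fact that the minimal polynomial of $\xi_{p_1}$ over $\Q(\xi_{N/p_1})$ is $1+x+\dots+x^{p_1-1}$ to conclude the $t_j$ are all equal (hence zero), whereas you use the factorization $\Phi_{N/p_1}(x^{p_1})=\Phi_N(x)\Phi_{N/p_1}(x)$ and a root-count at the $p_1$ preimages of $\delta$ — an equivalent, equally standard argument.
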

\begin{proof}

Note that $\Phi_{N} \mid m_B$  implies
\[ 0= \sum_{t \in B} \xi_{p_1}^{a_1(t)}
\ldots
\xi_{p_k}^{a_k(t)}.
\]
Let $\ell$ be an integer with $(\ell,p_1)=1$ and $\ell p_1 \equiv 1 \pmod{\frac{N}{p_1}}$. Then $\xi_N^{\ell p_1}$ is a primitive $\frac{N}{p_1}$'th root of unity and

\begin{equation}\label{eq11}
     \xi_N^{(\ell p_1)t}=\xi_{p_1}^{(\ell p_1)a_1(t)}\xi_{p_2}^{(\ell p_1)a_2(t)} \ldots \xi_{p_k}^{(\ell p_1)a_k(t)}=\xi_{p_2}^{a_2(t)}\ldots \xi_{p_k}^{a_k(t)}.
\end{equation}
Since $\xi_N^{\ell p_1}$ is a primitive $\frac{N}{p_1}$'th root of unity, by equation \eqref{eq11} we get that  $\Phi_{\frac{N}{p_1}} \mid m_B$ is equivalent to
\[ 0=  \sum_{t \in B} \xi_{p_2}^{a_2(t)}
\ldots
\xi_{p_k}^{a_k(t)}, \]
where $a_i(t) \equiv t \pmod{p_i}$, for $i=2 \stb k$.

\smallskip

\noindent
Using our notion $B_j$ we can rewrite these equations as
\[ 0= \sum_{j=0 }^{p_1-1} \xi_{p_1}^{j} \sum_{t \in B_{j}}\xi_{p_2}^{a_2(t)}\ldots \xi_{p_k}^{a_k(t)},
\]
and
\[ 0= \sum_{j=0 }^{p_1-1}  \sum_{t \in B_{j}}\xi_{p_2}^{a_2(t)}\ldots \xi_{p_k}^{a_k(t)},
\]
Let $t_j=\sum_{t \in B_{j}}\xi_{p_2}^{a_2(t)}\ldots \xi_{p_k}^{a_k(t)}$. Thus $0=\sum_{j=0 }^{p_1-1} \xi_{p_1}^{j} t_j$ and $0=\sum_{j=0 }^{p_1-1}  t_j$. Since $t_j \in \Q(\xi_{\frac{N}{p_1}})$ and the    minimal polynomial of $\xi_{p_1}$ over $\Q(\xi_{\frac{N}{p_1}})$ is $1+x+\ldots+ x^{p_1-1}$,
it follows that $t_j$'s are constant. Using $\sum_{j=0}^{p_1-1}t_j=0$ we obtain $t_j=0$  for every $j=0 \stb p_1-1$. \end{proof}

Note that Lemma \ref{lemprop1} can be generalised as it is stated in Proposition \ref{pprop}. As we do not use it in the proof of the main result we present it in Appendix B.

\begin{lem}\label{l1}
Let $N=pqrs$ for different prime numbers $p,q,r,s$ and $S \subseteq \Z_{N}$ a set and assume $\Phi_{N} \Phi_{N/s} \mid m_S$ and $S \cap ((x+\Z_q) \cup (x+\Z_r))=\{x\}$ for all $x\in S$. Then $S$ is a union of $\Z_p$-cosets.
\end{lem}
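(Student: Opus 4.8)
The plan is to strip off the $s$-coordinate, reduce to $\Z_{pqr}$, and then finish with a short combinatorial argument. Write $\Z_N=\Z_p\oplus\Z_q\oplus\Z_r\oplus\Z_s$ and apply Lemma~\ref{lemprop1} with $p_1=s$: since we are given $\Phi_N\mid m_S$ and $\Phi_{N/s}\mid m_S$, the ``$s$-slices'' $S_j=\{x\in S:\ x\equiv j\pmod s\}$, regarded inside $\Z_{N/s}\cong\Z_{pqr}$, each satisfy $\Phi_{pqr}\mid m_{S_j}$. Each $S_j$ also inherits the hypothesis $S_j\cap((x+\Z_q)\cup(x+\Z_r))=\{x\}$: if two elements of $S$ lie in the same $s$-slice and their $\Z_{pqr}$-parts differ by an element of $\Z_q$ (resp.\ $\Z_r$), then already in $\Z_N$ they differ by such an element, so they coincide. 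Since every $\Z_p$-coset of $\Z_N$ lies inside a single $s$-slice and restricts there to a $\Z_p$-coset of $\Z_{pqr}$, it suffices to prove the statement for $N=pqr$.

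So assume $N=pqr$, use coordinates $(a,b,c)\in\Z_p\times\Z_q\times\Z_r$, and for $a\in\Z_p$ set $T_a=\{(b,c):(a,b,c)\in S\}\subseteq\Z_q\times\Z_r$, viewed as a $0$--$1$ matrix indexed by $(b,c)$. The two coset conditions say exactly that every $T_a$ has at most one $1$ in each row and at most one $1$ in each column. The goal is to show $T_a$ is independent of $a$, which is equivalent to $S$ being a union of $\Z_p$-cosets. Because $\Phi_{pqr}\mid m_S$, Theorem~\ref{cuberule} (the 3D cube rule) applies: for all $a,a'\in\Z_p$, $b_1,b_2\in\Z_q$, $c_1,c_2\in\Z_r$ the mixed difference $S(a,b_1,c_1)-S(a,b_1,c_2)-S(a,b_2,c_1)+S(a,b_2,c_2)$ does not change when $a$ is replaced by $a'$. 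Hence all such mixed differences of the matrix $M:=T_a-T_{a'}$ vanish, which forces $M_{b,c}=\alpha_b+\beta_c$ for suitable integers $\alpha_b,\beta_c$ (take $\alpha_b=M_{b,0}-M_{0,0}$ and $\beta_c=M_{0,c}$).

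The remaining point --- and the only real obstacle --- is to rule out a nonzero matrix of the form $M_{b,c}=\alpha_b+\beta_c$ with entries in $\{-1,0,1\}$ that arises as a difference $T_a-T_{a'}$ of two such ``partial permutation'' matrices. Suppose $M_{b_0,c_0}\neq0$; swapping $a,a'$ if needed we may assume $T_a(b_0,c_0)=1$ and $T_{a'}(b_0,c_0)=0$, so $\alpha_{b_0}+\beta_{c_0}=1$. Since the unique $1$ in row $b_0$ and column $c_0$ of $T_a$ sits at $(b_0,c_0)$, every other entry of $M$ in that row and column is $\le0$; from $M_{b_0,c}=\alpha_{b_0}+\beta_c\le0$ and $M_{b,c_0}=\alpha_b+\beta_{c_0}\le0$ we get $\beta_c\le\beta_{c_0}-1$ for $c\neq c_0$ and $\alpha_b\le\alpha_{b_0}-1$ for $b\neq b_0$. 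Then $M_{b,c}=\alpha_b+\beta_c\le-1$ for all $b\neq b_0$, $c\neq c_0$, while $M_{b,c}\ge T_a(b,c)-1\ge-1$; thus $M_{b,c}=-1$ and $T_{a'}(b,c)=1$ for every $b\neq b_0$ and $c\neq c_0$. Since $p,q,r$ are distinct, $\max(q,r)\ge3$: if $r\ge3$ some row of $T_{a'}$ then contains at least two $1$'s, and if $q\ge3$ some column does, either way contradicting that $T_{a'}$ is a partial permutation matrix. Hence $M=0$, so $T_a=T_{a'}$ for all $a,a'$ and $S=\Z_p\times T_0$ is a union of $\Z_p$-cosets. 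Everything other than this last combinatorial step is routine once the slicing via Lemma~\ref{lemprop1} and the cube rule are set up.
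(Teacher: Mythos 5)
Your proof is correct. The first step --- slicing off the $s$-coordinate with Lemma~\ref{lemprop1} so that the 3D cube rule holds on each $\Z_{N/s}$-coset --- is exactly how the paper begins; where you diverge is the combinatorial finish. The paper argues locally by contradiction: if $x\in S$ but some $x_p\in(x+\Z_p)\setminus S$, then in any 3D cube $C(x,y)$ containing $x$ and $x_p$ all three neighbours of $x$ at Hamming distance $1$ lie outside $S$ (one is $x_p$, the other two are excluded by the $\Z_q$- and $\Z_r$-intersection hypothesis), so the cube rule forces every $y$ at distance $3$ with $x_p\in C(x,y)$ into $S$; since $\max(q,r)\ge3$, two such $y$ differ by an element of $\Z_q$ or $\Z_r$, contradicting the hypothesis. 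Your route is global instead: the cube rule makes every slice difference $T_a-T_{a'}$ additively decomposable as $\alpha_b+\beta_c$, and the partial-permutation constraint then rules out any nonzero matrix of that shape, again invoking $\max(q,r)\ge3$ at the end. The paper's argument is shorter and needs no bookkeeping beyond one cube; yours is somewhat longer but isolates a reusable structural fact (the difference of two $\Z_p$-slices of a set satisfying the 3D cube rule is a sum of a row function and a column function), and all the individual steps --- the inheritance of the coset condition by the $s$-slices, the bound $-1\le M_{b,c}\le 1$, and the case split on which of $q,r$ exceeds $2$ --- check out.
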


\begin{proof}
By Lemma \ref{lemprop1},
the assumption $\Phi_N\Phi_{N/s} \mid m_S$ implies that $S$ satisfies the 3D cube rule on every $\Z_{N/s}$-coset.
By the way of contradiction, assume that $S$ is not the union of $\Z_{p}$-cosets. Hence, there exists an $x \in S$ with $(x+\Z_p)\setminus S \ne \emptyset $. Let $x_p \in (x+\Z_p)\setminus S$. Then using our assumption $S \cap (x+\Z_q)=S \cap (x+\Z_r)=\{x\}$ we obtain that in every 3D cube containing $x$ and $x_p$ in $x+\Z_{N/s}$  there is no element $z\in S$ with $d_H(x,z)=1$. Thus, for any $y \in x+\Z_{N/s}$ with $d_H(x,y)=3$ and $x_p \in C(x,y)$ we have that $y \in S$, by applying the 3D cube rule on $C(x,y)$.

This implies that there are elements $y_1,y_2\in S$ such that $y_1-y_2\in \Z_q$, if $q\ge 3$ (resp., $y_1-y_2\in \Z_r$, if $r\ge 3$), which contradicts the assumption $S \cap ((y_1+\Z_q)\cup (y_1+\Z_r))=\{y_1\}$.
\end{proof}

Note that Lemma \ref{l1} holds verbatim for any permutation of the primes $p,q,r,s$.

\begin{lem}\label{l2} Let $N=pqrs$ for different prime numbers $p,q,r,s$, $(S, \La)$ a spectral pair in $\Z_{N}$ and $r,s\nmid |S|=|\La|$.

\begin{enumerate}
\item\label{iteml2a} If $\Phi_{N} \mid m_{S}$, then  $\Phi_{N/p} \nmid m_{S}$ $($resp. $\Phi_{N/q} \nmid m_{S})$ or $S$ is a tile.
\item\label{iteml2b}
If $\Phi_{N} \mid m_{\La}$, then
$\Phi_{N/p} \nmid m_{\La}$ $($resp. $\Phi_{N/q} \nmid m_{\La})$ or $S$ is tile.
\end{enumerate}

\end{lem}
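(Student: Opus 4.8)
The plan is to prove both statements simultaneously by a symmetry argument, since (\ref{iteml2b}) follows from (\ref{iteml2a}) applied to $\Lambda$ in place of $S$: by Lemma \ref{duality}, if $(S,\Lambda)$ is a spectral pair then so is $(\Lambda,S)$, and "$S$ is a tile" is equivalent to "$\Lambda$ is a tile" by the Fuglede-type results already available in sub-maximal cases (more precisely, one observes that the conclusion we want is a \emph{tiling} conclusion for $S$, and once $\Lambda$ tiles $\Z_N$ we may invoke the relevant lemmas from Summary \ref{summary} together with ${\bf S-T}(\Z_{pqr})$ to transfer this back to $S$). So it suffices to prove (\ref{iteml2a}), and by the permutation remark after Lemma \ref{l1} it even suffices to rule out the case $\Phi_N\Phi_{N/p}\mid m_S$.

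So assume for contradiction that $\Phi_N \mid m_S$ and $\Phi_{N/p}\mid m_S$ (the case of $\Phi_{N/q}$ being identical after swapping the roles of $p$ and $q$). First I would apply Lemma \ref{lemprop1} with $p_1=p$: from $\Phi_{N/p}\mid m_S$ and $\Phi_N\mid m_S$ we get that each slice $S_j=\{b\in S: b\equiv j \pmod p\}$, viewed in $\Z_{N/p}=\Z_{qrs}$, satisfies $\Phi_{qrs}\mid m_{S_j}$, i.e.\ each $S_j$ obeys the 3D cube rule in $\Z_{qrs}$. Now recall from Corollary \ref{rsndiv} that $r,s\nmid |S|$ forces $\Phi_r,\Phi_s\nmid m_S$; this gives that $(S-S)\cap \frac{N}{r}\Z_N=\{0\}$ and $(S-S)\cap\frac{N}{s}\Z_N=\{0\}$, so $S$ meets every $\Z_r$-coset and every $\Z_s$-coset in at most one point. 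In the language of Lemma \ref{l1} with the prime $s$ in the distinguished role, $S$ intersects each $(x+\Z_q)\cup(x+\Z_r)$ in the single point $x$ — wait, that would need $q\nmid|S|$ as well, which we do not have. The honest route is to keep track of which cube directions are available.

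The core of the argument, then, is a cube-rule propagation in the $3$-dimensional fibers $x+\Z_{qrs}$, exactly as in the proof of Lemma \ref{l1}. Fix $x\in S$. Because $S$ is a genuine set (not a multiset) and meets every $\Z_r$-coset and every $\Z_s$-coset at most once, in any 3D cube $C\subseteq x+\Z_{qrs}$ through $x$ the only edges from $x$ that could land a second point of $S$ at Hamming distance $1$ are the ones in the $q$-direction. Applying the 3D cube rule on such a cube $C(x,y)$ with $d_H(x,y)=3$ then forces a definite pattern: either a $q$-neighbour of $x$ lies in $S$, or $y$ and exactly one of the two "distance-$2$" vertices lie in $S$. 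Pushing this across all choices of the $r$- and $s$-coordinates of $y$, I would derive that $S\cap(x+\Z_{qrs})$ is forced to be a union of $\Z_q$-cosets inside that fiber (this is where the hypothesis $\Phi_{N/p}\mid m_S$, i.e.\ the 3D cube rule in \emph{every} $\Z_{N/s}$-coset as well, is used to pin down consistency across fibers, mirroring Lemma \ref{l1}). Ranging over all $x\in S$, this shows $S$ is a union of $\Z_q$-cosets of $\Z_N$; but then Summary \ref{summary} (the "union of $\Z_{p_i}$-cosets" bullet, using ${\bf S-T}(\Z_{N/q})={\bf S-T}(\Z_{prs})$ from \cite{Shi18} via Lemma \ref{lazpcos}) already gives that $S$ tiles $\Z_N$ — which is precisely the allowed alternative in the statement, so in fact there is no contradiction to force, and the dichotomy "$\Phi_{N/p}\nmid m_S$ or $S$ tiles" holds. (I would also dispatch the degenerate subcases $q=2$, $r=2$, $s=2$ separately, since then some $\Z_q$- or $\Z_r$-coset has only two points and the "at most one point" statements degenerate; these are handled exactly as the small-prime remarks in Lemma \ref{l1}.)

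The main obstacle I anticipate is the bookkeeping in the propagation step: one must show that the 3D cube rule in the fibers $x+\Z_{qrs}$, \emph{together} with the cube rule in the transverse $\Z_{N/s}$-cosets coming from $\Phi_{N/p}\mid m_S$, genuinely forces "union of $\Z_q$-cosets" rather than merely some weaker local structure — the subtlety being that a point at distance $2$ from $x$ inside a fiber can itself be a point of $S$, so the "no distance-$1$ point except in the $q$-direction" observation does not by itself kill all terms of the cube rule. Organizing the induction on the number of non-$q$ coordinates in which elements of $S\cap(x+\Z_{qrs})$ can differ, and checking the base case $q=2$ by hand, is where the real work lies; everything else is an assembly of Lemmas \ref{duality}, \ref{lemprop1}, \ref{lazpcos}, Corollary \ref{rsndiv}, and the $\Z_{pqr}$ result.
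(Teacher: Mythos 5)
There is a genuine gap: you never close the core step, and the reason is that you apply Lemma \ref{l1} with the wrong permutation of the primes. Lemma \ref{l1} (which, as the paper notes, holds verbatim for any permutation of $p,q,r,s$) has three roles: the prime $s$ whose $\Phi_{N/s}$ divides $m_S$, the two primes $q,r$ for which $S$ meets each coset in a single point, and the remaining prime $p$ in whose direction $S$ is concluded to be a union of cosets. In the situation of Lemma \ref{l2}\,(\ref{iteml2a}) you have $\Phi_N\Phi_{N/p}\mid m_S$ and, by Corollary \ref{rsndiv}, $S\cap((x+\Z_r)\cup(x+\Z_s))=\{x\}$ for all $x\in S$; so the correct substitution is $s\mapsto p$, $\{q,r\}\mapsto\{r,s\}$, $p\mapsto q$, and Lemma \ref{l1} immediately yields that $S$ is a union of $\Z_q$-cosets, hence a tile by Summary \ref{summary}. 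You instead tried the permutation with $s$ in the distinguished role, correctly observed that this would require $q\nmid|S|$ (which is unavailable), and then set out to re-derive the cube-rule propagation from scratch, explicitly leaving it as ``where the real work lies.'' That propagation \emph{is} Lemma \ref{l1}; with the right permutation there is nothing left to prove, and without it your proposal does not contain a proof. (A smaller slip: Lemma \ref{lemprop1} with $\Phi_N\Phi_{N/p}\mid m_S$ gives the 3D cube rule on $\Z_{N/p}$-cosets, not on $\Z_{N/s}$-cosets as you write at one point.)

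Your reduction of (\ref{iteml2b}) to (\ref{iteml2a}) is also not quite right as stated: applying (\ref{iteml2a}) to the pair $(\La,S)$ would only conclude that $\La$ tiles, and ``$\La$ tiles $\Rightarrow S$ tiles'' is not available here (Proposition \ref{proplatilestile} assumes $\Phi_N\nmid m_{\La}$, which is the opposite of the present hypothesis). What actually transfers is the \emph{structural} conclusion: the argument shows $\La$ is a union of $\Z_q$-cosets (resp.\ $\Z_p$-cosets), and then Lemma \ref{lazpcos} (via Summary \ref{summary} and ${\bf S-T}(\Z_{pqr})$) gives that $S$ tiles. This is exactly how the paper handles part (\ref{iteml2b}).
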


\noindent
{\it Proof.}
\begin{enumerate}
\item By the way of contradiction, assume $\Phi_{N/p} \mid m_S$. By Lemma \ref{lemprop1}, $\Phi_N \Phi_{N/p}\mid m_S$ implies that the 3D cube rule holds for every $\Z_{N/p}$-coset.
Since $r \nmid |S|$ and $s \nmid |S|$ we have $S\cap((x +\Z_r) \cup (x+\Z_s)) =\{x\}$ for all $x\in S$. Applying Lemma \ref{l1} gives that $S$ is a  union of $\Z_q$-cosets and hence a tile by Summary \ref{summary}. Similar argument applies if $\Phi_{N/q} \mid m_S$.
\item
Assume $\Phi_{N/p} \mid m_{\La}$ (resp. $\Phi_{N/q} \mid m_{\La}$). Similar argument as in the previous case implies that $\La$ is a union of $\Z_q$-cosets (resp. $\Z_p$-cosets). Then, by Summary \ref{summary}, $S$ is a tile.
\qed
\end{enumerate}

\subsection{Reduction \texorpdfstring{$\bmod~ p$}{}}

$\bullet$
The lemma following below is due to Lam and Leung \cite{LL2000}, written in polynomial notation.

\begin{lem}\label{twoprimes}
If $\Phi_{pq}(x)\mid m_S(x)$, then the multiset $S_{pq}$ is the sum of $\Z_{p}$- and $\Z_{q}$-cosets. In polynomial notation,
\[m_S(x)\equiv P(x)\Phi(x^{p})+Q(x)\Phi(x^{q})\pmod{x^{pq}-1},\]
where $P(x),Q(x)\in\Z_{\geq0}[x]$. Then the cardinality of $S$ satisfies $\abs{S}=pk+q\ell$, for some $k,\ell \geq 0$. If $m_S(\xi_{p})m_S(\xi_{q}) \neq 0$, then $k,\ell > 0$, and so
$\abs{S} \geq p+q$.
\end{lem}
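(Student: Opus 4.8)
The plan is to reduce everything to the group $\Z_{pq}$ and then invoke the classification of vanishing sums of $pq$-th roots of unity with non-negative coefficients, which for two primes has a short elementary proof by Galois descent. First I would observe that $\Phi_{pq}\mid m_S$ holds if and only if $\Phi_{pq}$ divides the reduction of $m_S(x)$ modulo $x^{pq}-1$, and that this reduction — still a polynomial with non-negative integer coefficients — is precisely the mask polynomial of the projected multiset $S_{pq}\ssq\Z_{pq}$. So it suffices to prove the structural statement for $S_{pq}$. Using $\Z_{pq}\cong\Z_p\oplus\Z_q$, I index the multiplicity of $S_{pq}$ at the element corresponding to $(i,j)\in\Z_p\times\Z_q$ by $c_{i,j}\in\Z_{\ge0}$; then $\Phi_{pq}\mid m_{S_{pq}}$ says $\sum_{i\in\Z_p}\sum_{j\in\Z_q}c_{i,j}\,\xi_p^{i}\xi_q^{j}=0$.

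The heart of the argument is the descent. Grouping by the first coordinate gives $\sum_{i=0}^{p-1}d_i\,\xi_p^{i}=0$ with $d_i:=\sum_{j=0}^{q-1}c_{i,j}\,\xi_q^{j}\in\Z[\xi_q]$. Since $\gcd(p,q)=1$, the polynomial $1+x+\dots+x^{p-1}$ is the minimal polynomial of $\xi_p$ over $\Q(\xi_q)$, so $\{1,\xi_p,\dots,\xi_p^{p-2}\}$ is a $\Q(\xi_q)$-basis of $\Q(\xi_{pq})$ and $\xi_p^{p-1}=-(1+\xi_p+\dots+\xi_p^{p-2})$; substituting this into the relation and comparing coordinates forces $d_0=\dots=d_{p-1}=:d$. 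Running the same kind of argument one level down — expanding $d\in\Z[\xi_q]$ in the $\Q$-basis $\{1,\xi_q,\dots,\xi_q^{q-2}\}$ and using $\xi_q^{q-1}=-(1+\dots+\xi_q^{q-2})$ in $\sum_{j=0}^{q-1}c_{i,j}\,\xi_q^{j}=d$ — produces integers $e_j$ (the coordinates of $d$, hence independent of $i$) together with integers $m_i:=c_{i,q-1}$ such that $c_{i,j}=m_i+e_j$ for all $i$ and $j$.

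It remains to restore non-negativity and translate back. The identity $c_{i,j}=m_i+e_j$ is unchanged under $m_i\mapsto m_i-t$, $e_j\mapsto e_j+t$, and since $c_{i,j}\ge0$ we have $\min_i m_i+\min_j e_j=c_{i_0,j_0}\ge0$ at indices attaining the two minima; hence we may pick an integer $t$ with $-\min_j e_j\le t\le\min_i m_i$ and, after applying it, assume $m_i\ge0$ and $e_j\ge0$. Then $m_{S_{pq}}$ is a sum of $m_i$ copies of the $i$-th $\Z_q$-coset plus $e_j$ copies of the $j$-th $\Z_p$-coset, which in mask-polynomial form is exactly $m_S(x)\equiv P(x)\Phi_p(x^{q})+Q(x)\Phi_q(x^{p})\pmod{x^{pq}-1}$ with $P,Q\in\Z_{\ge0}[x]$, $P(1)=\sum_j e_j=:k$ and $Q(1)=\sum_i m_i=:\ell$ (here $\Phi_p(x^q)$ and $\Phi_q(x^p)$ are the mask polynomials of a $\Z_p$- and a $\Z_q$-coset, respectively). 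Setting $x=1$ gives $\abs{S}=\abs{S_{pq}}=p\,k+q\,\ell$. Finally, as $\xi_p$ is a root of $x^{pq}-1$ we may set $x=\xi_p$ in this congruence: then $\Phi_q(\xi_p^{p})=\Phi_q(1)=q$, whereas $\Phi_p(\xi_p^{q})=0$ since $\xi_p^{q}$ is again a primitive $p$-th root of unity, so $m_S(\xi_p)=q\,Q(\xi_p)$. If $\ell=Q(1)=0$ then $Q\equiv0$ and $m_S(\xi_p)=0$; hence $m_S(\xi_p)\ne0$ forces $\ell\ge1$, and symmetrically $m_S(\xi_q)\ne0$ forces $k\ge1$, so $\abs{S}=pk+q\ell\ge p+q$.

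I expect the Galois-descent step — establishing that the $d_i$ coincide and then extracting the additive shape $c_{i,j}=m_i+e_j$ — to be the technical core of the proof. The point most easily missed is the last reduction: non-negativity of the $c_{i,j}$ is exactly what permits the shift $t$ to be chosen so that both families $(m_i)$ and $(e_j)$ become non-negative simultaneously, upgrading a formal identity to an honest decomposition of the multiset $S_{pq}$ into $\Z_p$- and $\Z_q$-cosets.
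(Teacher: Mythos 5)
Your proof is correct. The paper does not prove this lemma at all --- it is quoted from Lam and Leung \cite{LL2000} --- and your argument is precisely the standard one for the two-prime case: Galois descent over $\Q(\xi_q)$ forces the inner sums $d_i$ to coincide, a second descent yields the additive form $c_{i,j}=m_i+e_j$, and the observation that non-negativity of the $c_{i,j}$ allows a simultaneous shift making both families non-negative converts the formal identity into a genuine decomposition into $\Z_p$- and $\Z_q$-cosets. All the steps, including the final evaluation at $x=\xi_p$ to deduce $\ell>0$ from $m_S(\xi_p)\neq 0$, check out.
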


$\bullet$ In what follows, it will be very useful to reduce the coefficients of mask polynomials $\bmod \ p$, for some primes $p\mid N$ (i.e., we will consider them as polynomials in $\FF_p[x]$).
For square-free integer $m$ not divisible by $p$ the following equation holds.

\begin{equation}\label{cyclomodp}
\Phi_{pm}(x)=\frac{\Phi_m(x^p)}{\Phi_m(x)}=\frac{\Phi_m(x)^p}{\Phi_m(x)}=\Phi_m(x)^{p-1} \text{ in } \FF_p[x],
\end{equation}
hence
\begin{equation}\label{modpdiv}
\Phi_{pm}(x)\mid m_S(x)\Longrightarrow \Phi_m(x)\mid m_S(x) \text{ in } \FF_p[x].
\end{equation}

\begin{lem}\label{incompletetriangle}
 Assume that
  \[\Phi_{p_2p_3}\Phi_{p_2}\Phi_{p_3}\mid m_S\]
 in $\FF_{p_1}[x]$.
 Then
 at least one of the following conditions holds:
 \begin{enumerate}[{\bf(i)}]
     \item $\Phi_{p_1}\mid m_S$. \label{inciii}
     \item\label{inci} $\Phi_{p_2}\Phi_{p_3}\Phi_{p_2p_3} \mid m_S$.
     \item $\Phi_{p_4}\mid m_{\La}$ for every spectrum $\La$ of $S$. \label{incii}
\end{enumerate}
 If in addition,
 \[m_S(\xi_{p_1})m_S(\xi_{p_4})m_S(\xi_{p_1p_4})\neq0,\]
 then either \ref{inci} holds or \ref{incii} holds along with $p_1\mid\abs{S}$.
\end{lem}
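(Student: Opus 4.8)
The plan is to establish the contrapositive of the trichotomy: assume that (i) and (iii) both fail and deduce (ii). Failure of (iii) produces a spectrum $\La$ of $S$ with $\Phi_{p_4}\nmid m_{\La}$, i.e.\ $m_{\La}(\xi_{p_4})\neq0$. Since $S$ is also a spectrum of $\La$ by Lemma~\ref{duality}, relation \eqref{divclass} applied to the pair $(\La,S)$ shows that $S-S$ contains no element of order $p_4$, so $S$ meets each coset of the order-$p_4$ subgroup in at most one point. Consequently the projection $B:=S_{p_1p_2p_3}$ is a genuine \emph{set} of size $|S|$, and $m_B(\xi_d)=m_S(\xi_d)$ for every $d\mid p_1p_2p_3$. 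Since $m_B(x)\equiv m_S(x)\pmod{x^{p_1p_2p_3}-1}$ and $\Phi_{p_2}\Phi_{p_3}\Phi_{p_2p_3}\mid x^{p_1p_2p_3}-1$, the hypothesis descends to $\Phi_{p_2}\Phi_{p_3}\Phi_{p_2p_3}\mid m_B$ in $\FF_{p_1}[x]$; and failure of (i) reads $m_B(\xi_{p_1})\neq0$. The whole problem is now internal to the three-prime group $\Z_{p_1p_2p_3}$.

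The key point is the identity $\Phi_{p_2}\Phi_{p_3}\Phi_{p_2p_3}=\frac{x^{p_2p_3}-1}{x-1}=1+x+\dots+x^{p_2p_3-1}$. Reducing $\Phi_{p_2}\Phi_{p_3}\Phi_{p_2p_3}\mid m_B$ modulo $x^{p_2p_3}-1$ and comparing degrees forces $m_{S_{p_2p_3}}\equiv\bar c\,(1+x+\dots+x^{p_2p_3-1})$ in $\FF_{p_1}[x]$ for a single $c\in\{0,1,\dots,p_1-1\}$; equivalently, all multiplicities of the projection $S_{p_2p_3}=B_{p_2p_3}$ are $\equiv c\pmod{p_1}$. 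As $B$ is a set in $\Z_{p_1p_2p_3}$, each such multiplicity lies in $\{0,1,\dots,p_1\}$, hence equals $c$ when $c\neq0$ and lies in $\{0,p_1\}$ when $c=0$. If $c\neq0$, then every element of $\Z_{p_2p_3}$ occurs in $S_{p_2p_3}$ exactly $c$ times, so for each $d\in\{p_2,p_3,p_2p_3\}$ one gets $m_S(\xi_d)=m_{S_{p_2p_3}}(\xi_d)=c\sum_{a\in\Z_{p_2p_3}}\xi_d^{\,a}=0$, which is (ii). If $c=0$, then $B$ is a union of cosets of the order-$p_1$ subgroup $L=\ker(\Z_{p_1p_2p_3}\to\Z_{p_2p_3})$; since $L$ maps isomorphically onto $\Z_{p_1}$ under reduction mod $p_1$, such a union has equally many elements in each class mod $p_1$, forcing $m_B(\xi_{p_1})=0$ and contradicting the failure of (i). This settles the first assertion.

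For the ``in addition'' statement I would start from $m_S(\xi_{p_1})\,m_S(\xi_{p_4})\,m_S(\xi_{p_1p_4})\neq0$. By \eqref{divclass}, $\La-\La$ then avoids every nonzero element of the subgroup $H\le\Z_N$ of order $p_1p_4$ (these have order $p_1$, $p_4$, or $p_1p_4$), so $\La$ embeds into $\Z_N/H$ and $|S|=|\La|\le p_2p_3$. Reducing the hypothesis modulo $x^{p_2p_3}-1$ as before, all multiplicities of $S_{p_2p_3}$ are $\equiv c\pmod{p_1}$ for some $c$. If $c\neq0$, each multiplicity is $\ge c\ge1$, so $p_2p_3\le c\,p_2p_3\le|S|\le p_2p_3$; hence $c=1$, $S_{p_2p_3}=\Z_{p_2p_3}$, and $S$ is a complete transversal for the $H$-cosets, i.e.\ $S\oplus H=\Z_N$; as the characters trivial on $H$ are exactly those of order dividing $p_2p_3$, we have $\Phi_d\nmid m_H$ for $d\in\{p_2,p_3,p_2p_3\}$, so the tiling identity $m_Sm_H\equiv1+x+\dots+x^{N-1}\pmod{x^N-1}$ gives (ii). If $c=0$, then $p_1$ divides every multiplicity of $S_{p_2p_3}$, hence $p_1\mid|S|$; and since $m_S(\xi_{p_1})\neq0$ already excludes (i), the first part of the lemma gives (ii) or (iii). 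In every case we obtain (ii), or (iii) together with $p_1\mid|S|$.

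The main obstacle is spotting the two reductions that keep the argument short: that the failure of (iii) is precisely what provides the injectivity $S\hookrightarrow\Z_{p_1p_2p_3}$ — turning the a priori multiset $B$ into a set, which is what makes the ``constant multiplicity mod $p_1$'' dichotomy usable — and that the block $\Phi_{p_2}\Phi_{p_3}\Phi_{p_2p_3}$ equals $\frac{x^{p_2p_3}-1}{x-1}$, so that divisibility by it (over $\FF_{p_1}$ or over $\Z$) is just balancedness of the $p_2p_3$-coordinate. After that, the one delicate case is $c=0$, where one must notice that $B$ (resp.\ $S_{p_2p_3}$) collapses into a union of order-$p_1$ cosets and extract the contradiction with $m_B(\xi_{p_1})\neq0$ (resp.\ deduce $p_1\mid|S|$).
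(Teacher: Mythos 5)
Your proof is correct and follows essentially the same route as the paper's: the identity $\Phi_{p_2}\Phi_{p_3}\Phi_{p_2p_3}=1+x+\dots+x^{p_2p_3-1}$ forces the multiplicities of $S_{p_2p_3}$ to be constant $\bmod~p_1$, failure of \ref{incii} makes $S_{N/p_4}$ a proper set so these multiplicities are at most $p_1$, and the dichotomy $c\neq 0$ versus $c=0$ yields \ref{inci} or \ref{inciii}, while the bound $|\La|\le p_2p_3$ from \eqref{divclass} settles the ``in addition'' claim exactly as in the paper. The only differences are organizational (you argue the contrapositive and phrase the $c\neq0$ subcase of the second part via a tiling identity rather than a cardinality contradiction), not mathematical.
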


\begin{proof}
It follows from our condition that
\[m_S(x)\equiv\sum_{j=0}^{p_2p_3-1}\abs{S_{j\bmod{p_2p_3}}}x^j\equiv c(1+x+\dotsb+x^{p_2p_3-1})+p_1P(x)~~\bmod(x^{p_2p_3}-1)\]
 for some $0\leq c<p_1$ and $P(x)\in\Z_{\geq 0}[x]$, where $$S_{j\bmod p_2p_3}=\{x\in  S_{p_2p_3}\mid ~x\equiv j \pmod{p_2p_3}\}.$$
Thus
 \[m_S(x)\equiv  c(1+x+\dotsb+x^{p_2p_3-1})~~\bmod(p_1,x^{p_2p_3}-1),\]
 If the polynomial
 $P$ is identically zero, then obviously \ref{inci} holds. If \ref{inci} fails, there is some
 $j$ with $\abs{S_{j\bmod p_2p_3}}\geq c+p_1\geq p_1$. If there is some $j$ such that
 $\abs{S_{j\bmod p_2p_3}}>p_1$, then there are two distinct elements of $S_{j\bmod p_2p_3}$ having
 the same remainder modulo $p_1$, hence $(S-S)\cap\frac{N}{p_4}\znp\neq\vn$ and \ref{incii} holds by
 \eqref{divclass}. If both \ref{inci} and \ref{incii} fail, then we must have
 $\abs{S_{j\bmod p_2p_3}}\leq p_1$ for every $j$, i.e., $c=0$ and
 \[\abs{S_{j\bmod p_2p_3}}=0\text{ or }p_1\]
 for every $j$. Moreover, $(S-S)\cap\frac{N}{p_4}\znp=\vn$ by \eqref{divclass}, so $S_{\frac{N}{p_4}}$ is a proper set, which implies that $\abs{S_{j+kp_2p_3\bmod\frac{N}{p_4}}}\leq1$,
 for every $j$ and $k$. For those $j$ that $\abs{S_{j\bmod p_2p_3}}=p_1$ holds,
 we must have
 \[\abs{S_{j+kp_2p_3\bmod\frac{N}{p_4}}}=1\]
 for all $k$, which in polynomial form is written as
 \[m_S(x)\equiv Q(x)\Phi_{p_1}(x^{\frac{N}{p_1p_4}})\bmod(x^{N/p_4}-1),\]
 which easily yields \ref{inciii}.

 Next, suppose that $m_S(\xi_{p_4})m_S(\xi_{p_1})m_S(\xi_{p_1p_4})\neq0$, so that \ref{inciii} fails.
 By the previous discussion, either \ref{inci} or \ref{incii} holds. Assume that \ref{inci} fails, so
 that \ref{incii} holds true. We only need to show that $p_1\mid\abs{S}$.
 By \eqref{divclass} we obtain
 \[(\La-\La)\cap p_2p_3\Z_N=\set{0},\]
 for every $\La$ spectrum of $S$, so that $\abs{\La}\leq p_2p_3$. On the other hand,
 \[\abs{S}=m_S(1)=cp_2p_3+p_1P(1).\]
 The latter is greater than $p_2p_3$ when $c\geq1$, since $P$ is not identically zero, otherwise \ref{inci} holds.
 Hence $c=0$, and obviously $p_1\mid \abs{S}$, as desired.
\end{proof}

\section{The main result and the structure of the proof}

\begin{Thm}
Fuglede's conjecture is true for $\Z_{pqrs}$, where $p,q,r,s$ are different primes.
\end{Thm}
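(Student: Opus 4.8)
The Tile $\Rightarrow$ Spectral direction for square-free cyclic groups is already known (Theorem~\ref{discreteFuglederesults}, item~(7)), so I would only need to prove the Spectral $\Rightarrow$ Tile direction, $\mathbf{S-T}(\Z_{pqrs})$. Every proper quotient and every proper subgroup of $\Z_{pqrs}$ is cyclic of square-free order with at most three prime factors, hence Fuglede's conjecture holds for all of them (Theorem~\ref{discreteFuglederesults}, item~(8)); this makes Lemmas~\ref{zero}, \ref{propgenerate}, \ref{lem1}, \ref{propcsakzpeltolt}, \ref{lazpcos} (and the corollary following Lemma~\ref{lazpcos}) available, along with $\mathbf{S-T}(H)$ for any proper subgroup $H$. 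Invoking Summary~\ref{summary}, I would fix a spectral pair $(S,\La)$ in $\Z_N$, $N=pqrs$, for which all the Summary shortcuts fail, i.e.: $0\in S\cap\La$; both $S$ and $\La$ generate $\Z_N$; neither $S$ nor $\La$ is a union of $\Z_{p_i}$-cosets for any $p_i\in\set{p,q,r,s}$; $|S|=|\La|\geq 6$; and, after relabelling (Lemma~\ref{manyprimes}, Corollary~\ref{rsndiv}), $r\nmid|S|$, $s\nmid|S|$, so $\Phi_r,\Phi_s\nmid m_S$ and $\Phi_r,\Phi_s\nmid m_\La$. It then remains to show that such an $(S,\La)$ nonetheless forces $S$ to tile $\Z_N$.

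\textbf{Bookkeeping.}
For $T\in\set{S,\La}$ set $D(T)=\set{d\mid N:\ d>1,\ \Phi_d\mid m_T}$, so that $Z(T)$ consists exactly of the elements of $\Z_N$ whose order lies in $D(T)$. By \eqref{divclass} we have $\La-\La\ssq\set{0}\cup Z(S)$ and $S-S\ssq\set{0}\cup Z(\La)$; since $0\in\La$, the set $\La-\La$ generates $\Z_N$, so for every prime $p_i\mid N$ there are $d\in D(S)$ and $d'\in D(\La)$ with $p_i\mid d$ and $p_i\mid d'$, and symmetrically. With $r,s\notin D(S)\cup D(\La)$ this restricts the admissible pairs $(D(S),D(\La))$, and my plan is a case analysis over them. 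The engine is the cube rule: whenever $\Phi_N\mid m_T$, the multiset $T$ obeys the 4D cube rule on $\Z_N$ (Theorem~\ref{cuberule}); if in addition $\Phi_{N/p_i}\mid m_T$ then, by Lemma~\ref{lemprop1}, $T$ obeys the 3D cube rule on every $\Z_{N/p_i}$-coset, and further divisibilities push 2D cube rules down to smaller cosets. These rules are rigid, and I expect them --- through Lemmas~\ref{l1}, \ref{l2}, \ref{incompletetriangle}, the Lam--Leung Lemma~\ref{twoprimes} and the congruence \eqref{modpdiv} --- to repeatedly force $T$ or its partner to become a union of $\Z_{p_i}$-cosets, or to stop generating, at which point Summary~\ref{summary} closes the case. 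Note that Lemma~\ref{l2} already lets me assume that $\Phi_N\mid m_S$ implies $\Phi_{N/p},\Phi_{N/q}\nmid m_S$, and likewise for $\La$, which prunes the tree.

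\textbf{Case split.}
I would split on whether $\Phi_{pqrs}$ divides $m_S$ and whether it divides $m_\La$.
\begin{enumerate}[(i)]
 \item $\Phi_{pqrs}\mid m_S$ and $\Phi_{pqrs}\mid m_\La$. By Lemma~\ref{l2} I may assume $\Phi_{qrs},\Phi_{prs}\notin D(S)\cup D(\La)$, and then enumerate the possibilities for $D(S)$ and $D(\La)$ among the remaining divisors of $N$, up to the symmetries $p\leftrightarrow q$, $r\leftrightarrow s$ and (via Lemma~\ref{duality} together with Lemmas~\ref{lem1} and~\ref{lazpcos}) $S\leftrightarrow\La$. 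For each pair I would combine 2D and 3D cube rules on the relevant cosets, Lemma~\ref{l1}, Lemma~\ref{incompletetriangle} and the Lam--Leung structure of the projections $S_{p_ip_j}$ to conclude that $|S|$ is pinned to a value ruled out by $|S|\ge6$, Theorem~\ref{T6} or Lemma~\ref{manyprimes}, or that $S$ or $\La$ is a union of $\Z_{p_i}$-cosets, or that a subgroup of order $N/|S|$ is a tiling complement of $S$.
 \item $\Phi_{pqrs}\mid m_S$ but $\Phi_{pqrs}\nmid m_\La$ (and the symmetric subcase). Here $\La-\La$ misses every order-$pqrs$ element, so $\La$ has bounded spread in the $pqrs$-direction; using this together with $r,s\notin D(\La)$ and the requirement that $D(\La)$ cover all four primes, I would pin down $D(\La)$, then transport the resulting structure to $S$ via $S-S\ssq\set{0}\cup Z(\La)$ and the 4D cube rule for $S$, landing again in a Summary~\ref{summary} case.
 \item $\Phi_{pqrs}\nmid m_S$ and $\Phi_{pqrs}\nmid m_\La$. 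Now neither difference set meets the order-$pqrs$ elements, yet both generate $\Z_N$ and avoid orders $r$ and $s$; this is very restrictive (for instance primitivity alone already kills small $D(S)$ such as $\set{pq,rs}$), forcing $D(S)$ and $D(\La)$ to contain divisors jointly covering $\set{r,s}$ --- a three-prime divisor, or $\Phi_{rs}$, or a suitable pair among $\Phi_{pr},\Phi_{qr},\Phi_{ps},\Phi_{qs}$ --- after which a Lam--Leung and cube-rule analysis on the $\Z_{N/p}$- and $\Z_{N/q}$-cosets forces the coset structure or produces the complement.
\end{enumerate}

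\textbf{Main obstacle.}
The hard part, I expect, is case~(i) when $D(S)$ and $D(\La)$ are both of intermediate size --- each holding a few two- or three-prime cyclotomics, but not enough to trigger a shortcut lemma outright. There one must argue directly on the four-dimensional grid $\Z_p\oplus\Z_q\oplus\Z_r\oplus\Z_s$: superimposing cube rules of several dimensions to locate the points of $S$, tracking multiplicities in the projections $S_{p_i}$ and $S_{p_ip_j}$, and playing the constraints on $S$ against those on $\La$. Carrying this out systematically over all admissible pairs $(D(S),D(\La))$, modulo symmetry, is the technical core of the proof; Lemmas~\ref{l1}, \ref{l2} and~\ref{incompletetriangle} are exactly the tools meant to keep it manageable.
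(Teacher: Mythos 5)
Your outline reproduces the paper's top-level architecture faithfully: the Tile $\Rightarrow$ Spectral direction is quoted from the literature, the reductions of Summary \ref{summary} and Corollary \ref{rsndiv} are invoked correctly, and the split on whether $\Phi_N$ divides $m_S$ and/or $m_\La$ is exactly the split the paper uses. The toolbox you name (cube rules via Theorem \ref{cuberule} and Lemma \ref{lemprop1}, Lemmas \ref{l1}, \ref{l2}, \ref{incompletetriangle}, Lam--Leung) is also the right one.

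However, what you have written is a plan, not a proof: every case ends with ``I would combine \dots to conclude,'' and the section you label the ``main obstacle'' is precisely the content that constitutes the proof. Concretely, in your cases (i) and (ii) the paper does not enumerate pairs $(D(S),D(\La))$ at all; it proves a geometric dichotomy (Lemmas \ref{lem:cosets}, \ref{lem:phimidS} and \ref{lem:phinmidS}): either every $x\in S$ satisfies $x+\Z_p\ssq S$ or $x+\Z_q\ssq S$, or some $x$ fails both, and the second branch requires a 4D-cube argument whose hardest subcase ($s=2$, $|S|=pq$) is closed only by projecting $\La$ to $\Z_{2pq}$, using pigeonhole to obtain $\Phi_p\Phi_q\Phi_{pq}\mid m_S$ over $\FF_r$, and then invoking Lemma \ref{incompletetriangle}; none of that appears in your sketch. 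In your case (iii) the paper's argument rests on Corollary \ref{cor35} (primitivity forces one of $\Phi_{pq},\Phi_{pqr},\Phi_{pqs}$ to divide $m_S$), the equivalence $\Phi_{pqr}\mid m_S\Leftrightarrow\Phi_{pqr}\mid m_\La$ of Lemma \ref{pqrsla}, the mutual exclusion of $\Phi_{pqr}$ and $\Phi_{pqs}$ (Lemma \ref{lems2}), Proposition \ref{phipqdiv} for the situation $\Phi_p\Phi_q\mid m_S$, and the long case analysis of Lemma \ref{lems3}; your sketch gestures at primitivity but supplies none of these steps, and it is not evident that a raw enumeration of $(D(S),D(\La))$ would terminate without exactly these structural lemmas. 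Since the entire technical core is deferred, the proposal cannot be accepted as a proof of the theorem.
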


\textit{Proof.}
Note that the {\it Tile $\Rightarrow$ Spectral} direction follows from  an argument of Meyerowitz and \L aba written on Tao's blog\footnote{\url{https://terrytao.wordpress.com/2011/11/19/some-notes-on-the-coven-meyerowitz-conjecture/\#comment-121464}} and also by Ruxi Shi \cite{Shi18}.

\medskip

Now we consider the {\it Spectral $\Rightarrow$ Tile} direction. Let $(S,\La)$ be a spectral pair of $\Z_{pqrs}$. Let $m_S$ and $m_{\La}$ denote their mask polynomials. In the proof we distinguish the following cases.
\begin{enumerate}[(I)]
    \item
    $\Phi_{N}\mid m_S$ or $\Phi_{N}\mid m_{\La}$,
    \item
    $\Phi_{N}\nmid m_S$ and $\Phi_{N}\nmid m_{\La}$.
    \qedhere
\end{enumerate}

\subsection{Proof of Case (I):  \texorpdfstring{$\Phi_N \mid m_S$}{} or  \texorpdfstring{$\Phi_N \mid m_{\La}$}{}}

We also assume that $r,s\nmid |S|=|\La|$ hence, by Corollary \ref{rsndiv}, $\Phi_r, \Phi_s\nmid m_S, m_{\La}$.

\begin{lem}\label{lem:cosets} Let $(S, \La)$ be a spectral pair of $\Z_N$. Assume that for every $x \in S$ we have $x +\Z_p \subseteq S$ or $x+\Z_q \subseteq S$. Then $S$ is a tile.
\end{lem}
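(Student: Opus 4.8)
The plan is to dispose of the cases where $S$ is a union of $\Z_p$-cosets or of $\Z_q$-cosets via Lemma~\ref{propcsakzpeltolt}, and to reduce everything else — by a dichotomy on how $S$ sits among the $\Z_{pq}$-cosets — to a short difference-set argument that ends with Lemma~\ref{l2}. Note that ${\bf S-T}(\Z_{N/p})$ and ${\bf S-T}(\Z_{N/q})$ both hold, since $N/p$ and $N/q$ are products of three distinct primes, and by Summary~\ref{summary} we may assume $0\in S$. Set
\[A=\{x\in S:\ x+\Z_p\subseteq S\},\qquad B=\{x\in S:\ x+\Z_q\subseteq S\};\]
then $A$ is a union of $\Z_p$-cosets, $B$ is a union of $\Z_q$-cosets, and $S=A\cup B$ by hypothesis. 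If $S=A$ or $S=B$ we are done by Lemma~\ref{propcsakzpeltolt}, so assume $A\ne S$ and $B\ne S$; then both $A$ and $B$ are nonempty, because any $x\in S\setminus B$ witnesses $x+\Z_p\subseteq S$, and symmetrically.

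Next I would partition $\Z_N$ into its $\Z_{pq}$-cosets and split into two cases. If $S$ lies inside a single one of them, then — since $0\in S$ — that coset is $\Z_{pq}$ itself, so $S\subseteq\Z_{pq}$ does not generate $\Z_N$ and hence tiles by Summary~\ref{summary}. Otherwise $S$ meets at least two $\Z_{pq}$-cosets; since $A$ is a nonempty union of $\Z_p$-cosets and $B$ a nonempty union of $\Z_q$-cosets, the only way for all of those cosets to lie in one $\Z_{pq}$-coset would be for $S$ itself to do so, so I can choose a $\Z_p$-coset $P\subseteq A$ and a $\Z_q$-coset $Q\subseteq B$ lying in distinct $\Z_{pq}$-cosets. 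Writing elements of $\Z_N$ as $4$-tuples in $\Z_p\oplus\Z_q\oplus\Z_r\oplus\Z_s$, the difference set $P-Q$ equals $\{(\gamma,\eta,\delta):\gamma\in\Z_p,\ \eta\in\Z_q\}$ for a fixed $\delta\in\Z_{rs}$ with $\mathrm{ord}(\delta)\in\{r,s,rs\}$.

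To finish, I would use duality and Lemma~\ref{l2}. By Lemma~\ref{duality} the pair $(\La,S)$ is also spectral, so \eqref{divclass} yields $\Phi_e\mid m_\La$ whenever $S-S$ contains an element of order $e>1$. If $\mathrm{ord}(\delta)\in\{r,s\}$, the element $(0,0,\delta)\in S-S$ would force $\Phi_r\mid m_\La$ or $\Phi_s\mid m_\La$, contradicting $r,s\nmid|\La|$; hence $\mathrm{ord}(\delta)=rs$. Then $(\gamma,\eta,\delta)$ with $\gamma,\eta\ne0$ has order $N$, while $(0,\eta,\delta)$ with $\eta\ne0$ has order $N/p$, so both $\Phi_N\mid m_\La$ and $\Phi_{N/p}\mid m_\La$; part~(\ref{iteml2b}) of Lemma~\ref{l2} then forces $S$ to be a tile. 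The spectral ingredients used are minimal; the one step I would double-check is the dichotomy in the middle paragraph — that ``$S$ not inside one $\Z_{pq}$-coset'' genuinely produces a $\Z_p$-coset of $A$ and a $\Z_q$-coset of $B$ in \emph{distinct} $\Z_{pq}$-cosets — but this is immediate, since if every $\Z_p$-coset of $A$ and every $\Z_q$-coset of $B$ shared a common $\Z_{pq}$-coset, then all of $S$ would lie in it.
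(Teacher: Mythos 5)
Your proof is correct and follows essentially the same route as the paper's: both arguments isolate a $\Z_p$-coset and a $\Z_q$-coset of $S$ lying in distinct $\Z_{pq}$-cosets, rule out the cases where their $(r,s)$-offset has order $r$ or $s$ via $r,s\nmid|\La|$, and in the remaining case produce differences of order $N$ and $N/p$ to get $\Phi_N\Phi_{N/p}\mid m_{\La}$ and invoke Lemma \ref{l2}. The only cosmetic differences are that you organize the trichotomy by the order of the offset $\delta$ instead of by which $\Z_{N/r}$- or $\Z_{N/s}$-coset contains the two cosets, and you justify the existence of the disjoint pair of cosets in more detail than the paper's appeal to primitivity.
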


\begin{proof} By Summary \ref{summary}, if $S$ is the union of $\Z_p$-cosets (or $\Z_q$-cosets), then $S$ is a tile.
Since $S$ is primitive, $S$ contains $x+\Z_p$ and $y+\Z_q$ with $(x+\Z_p) \cap (y+\Z_q)= \emptyset$.

First, assume that $(x+\Z_p) \cup (y+\Z_q)$ is contained in a $\Z_{N/s}$-coset. Then it is easy to find $x_1 \in (x+\Z_p), ~ x_2 \in (y+\Z_q)$ with $x_1-x_2 \in \Z_r$, implying $r \mid |\La|$, a contradiction. Note that a similar argument works if $(x+\Z_p) \cup (y+\Z_q)$ is contained in a $\Z_{N/r}$-coset.

Now assume that $(x+\Z_p)\cup (y+\Z_q)$ is not contained in any proper coset of $\Z_N$. Then we may write $(x+\Z_p)=\left\{ (a,b_1,c_1,d_1) \mid a \in \Z_p \right\}$ and $(y+\Z_q)=\left\{ (a_2,b,c_2,d_2) \mid b \in \Z_q \right\}$, where $c_1 \ne c_2$ and $d_1 \ne d_2$. Then by suitable choice of $a$ and $b$ we obtain that $\Phi_N \mid m_{\La}$ and $\Phi_{\frac{N}{p}} \mid m_{\La}$.
Using Lemma \ref{l2}.\ref{iteml2b} we obtain that $S$ is a tile.
\end{proof}

\begin{lem}\label{lem:phimidS}
Let $(S, \La)$ be a spectral pair of $\Z_N$.
If $\Phi_N\mid m_S$ and there exists an $x \in S$ such that $x +\Z_p \not\subseteq S$ and $x+\Z_q \not\subseteq S$, then $S$ is tile.
\end{lem}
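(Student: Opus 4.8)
The plan is to prove that, under the hypotheses, $\La$ is forced to be a union of $\Z_p$-cosets or of $\Z_q$-cosets; by Summary \ref{summary} this makes $S$ a tile, and together with Lemma \ref{lem:cosets} it completes the subcase $\Phi_N\mid m_S$ of Case (I). Begin with the reductions already in force: since $r,s\nmid|S|=|\La|$, both $S$ and $\La$ meet every $\Z_r$-coset and every $\Z_s$-coset in at most one point (by Corollary \ref{rsndiv}, \eqref{divclass} and Lemma \ref{duality}); by Summary \ref{summary} we may take $S$ and $\La$ primitive with $|S|\geq 6$; and by Lemma \ref{l2}.\ref{iteml2a} we may assume $\Phi_{N/p}\nmid m_S$ and $\Phi_{N/q}\nmid m_S$, as otherwise $S$ is a tile. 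The hypothesis $\Phi_N\mid m_S$ gives the 4D cube rule for $S$ (Theorem \ref{cuberule}). Observe that none of the other easy cases of Summary \ref{summary} applies to $S$ directly: it is not a union of $\Z_p$- or $\Z_q$-cosets because of the prescribed element $x$, and not a union of $\Z_r$- or $\Z_s$-cosets by the sparsity just noted together with $|S|\geq 6$.

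Next I would use the element $x$ through the cube rule. Fix $x_p\in(x+\Z_p)\setminus S$ and $x_q\in(x+\Z_q)\setminus S$. For any $2$-element sets $A_r\subseteq\Z_r$, $A_s\subseteq\Z_s$ containing respectively the $\Z_r$- and the $\Z_s$-coordinate of $x$, let $C$ be the 4D cube containing $x$, $x_p$ and $x_q$ whose $\Z_r$- and $\Z_s$-edges are $A_r$ and $A_s$. The four distance-one neighbours of $x$ inside $C$ are precisely $x_p$, $x_q$, and the neighbours of $x$ along its own $\Z_r$- and $\Z_s$-cosets, and all four of them lie outside $S$ — the first two by the choice of the holes, the last two by the $\Z_r,\Z_s$-sparsity. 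Plugging this into the 4D cube rule of Theorem \ref{cuberule} and exploiting that the multiplicities of $S$ are nonnegative (the same device used in the proof of Lemma \ref{l1}) strongly constrains $S\cap C$; letting $A_r$ and $A_s$ run over all admissible choices should then force, via \eqref{divclass} and the divisor-class structure of $Z(S)$, extra cyclotomic factors of $m_S$ (and, dually, of $m_\La$) besides the single factor $\Phi_N$ we started from.

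Finally one must convert these new divisibilities into the structural conclusion: if they yield $\Phi_{N/p}\mid m_\La$ or $\Phi_{N/q}\mid m_\La$, then $S$ tiles by Lemma \ref{l2}.\ref{iteml2b}; if they yield that $\La$ (or $S$) is a union of $\Z_p$- or of $\Z_q$-cosets, or that $|S|\leq 5$, one is done by Summary \ref{summary} and Theorem \ref{T6}. I expect the main obstacle to be exactly this conversion, together with the rigidity analysis of the 4D cube rule under $\Z_r,\Z_s$-sparsity: one will have to split into cases according to the position of the holes $x_p,x_q$ relative to the $\Z_{N/r}$- and $\Z_{N/s}$-cosets of $x$ — paralleling the three positional cases in the proof of Lemma \ref{lem:cosets} — and keep careful track of which cyclotomics $\Phi_{pq},\Phi_{pr},\Phi_{qr},\Phi_{N/p},\Phi_{N/q},\dots$ become forced, invoking Lemma \ref{lemprop1} (to turn a product $\Phi_N\Phi_{N/p_i}$ into a 3D cube rule on every $\Z_{N/p_i}$-coset), Lemma \ref{l1}, and Lemma \ref{incompletetriangle} as needed.
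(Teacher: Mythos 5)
Your setup is correct and matches the paper's opening move: with $\Phi_N\mid m_S$ the 4D cube rule applies, and in any 4D cube $C$ through $x$, $x_p$, $x_q$ all four Hamming-distance-one neighbours of $x$ lie outside $S$ (the two prescribed holes, plus the $r$- and $s$-neighbours by $r,s\nmid|S|$), so the alternating sum forces at least one point of $S$ at distance $3$ from $x$ in $C$. But from that point on your proposal is a plan, not a proof: ``strongly constrains $S\cap C$'' and ``should then force \dots extra cyclotomic factors'' is exactly where the work lies, and you explicitly defer it (``I expect the main obstacle to be exactly this conversion''). The paper does \emph{not} proceed by accumulating divisibilities $\Phi_{N/p}\mid m_\La$ etc.\ and invoking Lemma \ref{l2}; it extracts a direct contradiction, and the mechanism is different from the one you sketch.

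Concretely, the missing ideas are these. First, assuming WLOG $r>s$ and $s\ge 3$ (so $r\ge 5$), one looks at the unique distance-$3$ vertex $u$ of $C$ sharing the $r$-coordinate of $x$. If $u\notin S$, then the forced distance-$3$ point of $S$ must differ from $x$ in its $r$-coordinate; replaying this over the $\ge 3$ further cubes obtained by varying only the $r$-edge (this is where $r\ge 5$ is used) produces two points of $S$ at distance $3$ from $x$ differing only in their $r$-coordinate, whence $\Phi_r\mid m_\La$ and $r\mid|\La|$, a contradiction. If $u\in S$, one replaces $u$ by a point $u'$ differing only in its $s$-coordinate (possible since $s\ge3$, and $u'\notin S$), and repeats. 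By symmetry this also disposes of the case where $p$ or $q$ fails to divide $|S|$, so one may assume $pq\mid|S|$. Second, the case $s=2$ genuinely breaks this argument and needs a separate treatment: there $r\nmid|S|$ forces $|S|=pq$, a pigeonhole on the projection $\La_{2pq}$ yields $\Phi_p\Phi_q\Phi_{pq}\mid m_S$ over $\FF_r[x]$, and Lemma \ref{incompletetriangle} (with $\Phi_r\nmid m_S$, $\Phi_s\nmid m_\La$) upgrades this to $\Phi_p\Phi_q\Phi_{pq}\mid m_S$ in $\Z[x]$, giving $S_{pq}=\Z_{pq}$ and tiling. Your proposed case split --- on the position of the holes $x_p,x_q$ relative to the $\Z_{N/r}$- and $\Z_{N/s}$-cosets of $x$, modelled on Lemma \ref{lem:cosets} --- is not the operative one; the split that makes the argument go through is on whether $s=2$ or $s\ge3$, and neither branch is present in your write-up.
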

\begin{proof}
Let us assume that there are $x,y,z \in \Z_N$ such that $x \in S$, $y,z \not\in S$ and $y \in x+\Z_p$, $z \in x+ \Z_q$. As $\Phi_N\mid m_S$, the 4D cube rule holds for $S$. Then, by taking any 4D cube $C$ having vertices $x,y$ and $z$, we obtain that $S$ contains at least one element of $C$ of Hamming distance $3$ from $x$. Indeed, points of Hamming distance $1$ are excluded by $y,z \not\in S$ and $r \nmid |S|$, $s \nmid |S|$. Without loss of generality we may assume $r>s$.

Fix a 4D cube $C$ containing $x,y,z$.
Let $u=u(x,y,z)$ be the point of $C$ such that $d_H(u,x)=3$ and $u$ and $x$ have the same $r$-coordinate. Let $o$ be the point opposite to $x$ on $C$.

{\bf Case 1.} $s\ge 3$ (hence $r\ge 5$).

\smallskip
{\bf Case 1.1.} If $u \not\in S$, then one of the other $3$ points of distance $3$ from $x$ is in $S$. (These are the ones that differ from $x$ in their $r$-coordinate.) Since $r\ge 5$, modifying only the $r$-coordinate of $o$ we may build up $3$ more 4D cubes having vertices $x,y,z,u$.
Applying the previous argument using $y,z,u \not\in S$ we obtain that $S$ contains a pair of points of Hamming distance $3$ from $x$ differing only in their $r$-coordinate, which contradicts the fact that $r\nmid |S|$.

{\bf Case 1.2.} If $u \in S$, then let $u'$ be a point which only differs in its $s$-coordinate from $u$ and have different $s$-coordinate from $x$.
Such $u'$ exists since $s \ge 3$.
Then $u' \not\in S$ since $s \nmid |S|$. Hence we may apply the previous argument by exchanging the role of $u$ and $u'$.

\medskip

Note that similar argument works if $p\nmid |S|$ (resp. $q\nmid |S|$) by changing the role of $p$ (resp.~$q$) and $s$. Therefore we can assume that $pq\mid |S|$.

\medskip

{\bf Case 2.} $s=2$ (and $pq \mid |S|$).

Note that the projection $S_{pqs}$ is a set in $\Z_{pqs}$ as $r \nmid |S|$. Therefore $|S|$ can be either $pq$ or $spq=2pq$. The latter case is not possible since $s\nmid |S|$, thus $|S|=|\La|=pq$. Now we project $\La$ on $\Z_{2pq}$ and so at least half of the elements of $\La$ project on the same $\Z_{pq}$-coset. Easy calculation shows if $\min\{p,q\} \ge 3$ (which clearly holds), then there are three pairs of elements of $\La_{2pq}$ having only different $p$-, $q$-, $pq$-coordinates, respectively.

Since $(S,\La)$ is a spectral pair, these conditions imply that $\Phi_p\Phi_q\Phi_{pq}\mid m_{S}$ over $\mathbb{F}_r[x]$.
Applying Lemma~\ref{incompletetriangle} and $rs\nmid|S|$  which gives $\Phi_r, \Phi_s\nmid m_S, m_{\La}$, we conclude that ${\bf(ii)}$ holds, i.e., $\Phi_p\Phi_q\Phi_{pq}\mid m_{S}$. This means that $S_{pq}=c\Z_{pq}$, for some $c\in \mathbb{N}$.  As $|S|=pq$, we have that $c=1$ and plainly, $S$ tiles $\Z_N$.  \end{proof}

\begin{lem}\label{lem:phinmidS}
Let $N=pqrs$ and
let $(\La, S)$ be a spectral pair.
Assume that $r,s\nmid|S|=|\La|$,  $\phi_N\nmid m_S$ and $\phi_N\mid m_{\La}$.
Then $S$ is a tile.
\end{lem}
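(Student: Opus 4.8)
The plan is to pass to the dual pair and prove that $\La$ is a union of $\Z_p$-cosets or of $\Z_q$-cosets; by Summary~\ref{summary} this at once forces $S$ to tile $\Z_N$. By Lemma~\ref{duality} the pair $(\La,S)$ is again spectral with $\Phi_N\mid m_\La$, so $\La$ now plays the role $S$ had in Lemmas~\ref{lem:cosets} and~\ref{lem:phimidS}; the hypothesis $\Phi_N\nmid m_S$ is what makes the stronger conclusion available. Indeed, \eqref{divclass} applied to $(\La,S)$ together with $\Phi_N\nmid m_S$ shows that $\La-\La$ contains no unit of $\Z_N$, so any two elements of $\La$ agree in at least one coordinate, i.e. $\La$ has no two points at Hamming distance $4$; and Corollary~\ref{rsndiv} gives $(\La-\La)\cap\Z_r=(\La-\La)\cap\Z_s=\{0\}$, so $\La$ meets each $\Z_r$- and each $\Z_s$-coset in at most one point.

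After the standard reductions (by Lemmas~\ref{propgenerate} and~\ref{lem1} we may assume both $S$ and $\La$ generate $\Z_N$, and by Lemma~\ref{l2}.\ref{iteml2b} that $\Phi_{N/p},\Phi_{N/q}\nmid m_\La$), I distinguish two cases. First, suppose every $x\in\La$ satisfies $x+\Z_p\subseteq\La$ or $x+\Z_q\subseteq\La$. If $\La$ is not already a union of one type of coset it contains some $\Z_p$-coset and some $\Z_q$-coset, and exactly as in the proof of Lemma~\ref{lem:cosets}: if these two cosets lie in a common $\Z_{N/r}$- or $\Z_{N/s}$-coset we get $r\mid|\La|$ or $s\mid|\La|$, against Corollary~\ref{rsndiv}; otherwise a suitable choice of points in them forces $\Phi_N\mid m_S$, against the hypothesis. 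So in this case $\La$ is a union of $\Z_p$- or of $\Z_q$-cosets and we are done.

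The second case is that some $x\in\La$ has $x+\Z_p\not\subseteq\La$ and $x+\Z_q\not\subseteq\La$. Here I re-run the cube-chasing of Lemma~\ref{lem:phimidS} with $\La$ in place of $S$, using the $4$D cube rule (valid since $\Phi_N\mid m_\La$), the ``at most one point per $\Z_r$- and per $\Z_s$-coset'' property, and the new fact that the vertex opposite any point of $\La$ in a $4$D cube lies outside $\La$. As there, the subcase $\min(r,s)\ge3$ produces two points of $\La$ differing only in their $r$- or $s$-coordinate, contradicting Corollary~\ref{rsndiv}; in the subcase where the smaller of $r,s$ is $2$ one projects $\La$ and $S$ onto $\Z_{pqs}$ (sets, as $r\nmid|S|=|\La|$), uses $|\La|=pq$ (from $pq\mid|\La|$ and $s\nmid|\La|$), extracts from a $\Z_{pq}$-coset containing at least half of the points of $S_{pqs}$ three pairs differing only in the $p$-, $q$- and $pq$-coordinates, and applies Lemma~\ref{incompletetriangle} over $\FF_r$ to the dual pair $(\La,S)$; its alternatives $\Phi_r\mid m_\La$ and $\Phi_s\mid m_S$ are ruled out by Corollary~\ref{rsndiv}, leaving $\Phi_p\Phi_q\Phi_{pq}\mid m_\La$, hence $\La_{pq}=\Z_{pq}$, i.e. $\La$ is a complete set of coset representatives modulo $\Z_{rs}$ and $\La\oplus\Z_{rs}=\Z_N$.

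Finally I must convert this structure of $\La$ into a tiling of $S$. Since $\Z_N=\Z_{pq}\oplus\Z_{rs}$ and $\La$ is a transversal of $\Z_{rs}$, the Fourier transform $\hat{\bs1}_\La$ vanishes on $\Z_{pq}\setminus\{0\}$; combined with $\Phi_r,\Phi_s\nmid m_\La$ and a separate handling of the possibility $\Phi_{rs}\mid m_\La$, this should force $S-S\subseteq\Z_{pq}$, and then $|S|=pq=\abs{\Z_{pq}}$ makes $S$ a coset of $\Z_{pq}$, which tiles $\Z_N$ with complement $\Z_{rs}$. I expect the real obstacle to be this last part together with the faithful adaptation of the $4$D cube-chasing of Lemma~\ref{lem:phimidS} to $\La$: the Hamming-distance-$\le3$ restriction on $\La$ (the new input here) should be exactly what keeps that chase from stabilising in a configuration incompatible with $S$ being a tile.
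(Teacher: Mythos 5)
Your overall strategy---dualize and re-run Lemmas \ref{lem:cosets} and \ref{lem:phimidS} with $\La$ in the role of $S$---is the paper's strategy, and your Case 1 is essentially sound: each branch either contradicts $\Phi_N\nmid m_S$ or $r,s\nmid|S|$, or shows $\La$ is non-primitive or a union of $\Z_p$- or $\Z_q$-cosets, whence $S$ tiles by Summary \ref{summary}. The genuine gap is in Case 2, at exactly the step you flag as the obstacle. You extract the three pairs (differing only in the $p$-, $q$- and $pq$-coordinates) from $S_{pqs}$, so the spectral relation $S-S\subseteq\{0\}\cup Z(\La)$ only gives $\Phi_p\Phi_q\Phi_{pq}\mid m_{\La}$ over $\FF_r$, and Lemma \ref{incompletetriangle} then yields $\La_{pq}=\Z_{pq}$, i.e.\ that \emph{$\La$} tiles. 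Your proposed transfer to $S$ does not work: from $\La\oplus\Z_{rs}=\Z_N$ you correctly get that $m_{\La}$ vanishes at $\xi_N^d$ for all $0\neq d\in rs\Z_N$, but $Z(\La)$ is not thereby confined to $rs\Z_N$ --- indeed $\Phi_N\mid m_{\La}$ holds by hypothesis, and nothing excludes $\Phi_{pqr}\mid m_{\La}$, $\Phi_{pr}\mid m_{\La}$, etc. --- so $S-S\subseteq\{0\}\cup Z(\La)$ does not force $S-S\subseteq\Z_{pq}$, and $S$ need not be a $\Z_{pq}$-coset.

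The fix is to run the pair-extraction on $\La$ rather than on $S$, which is what the paper does. Having established (via the dualized Lemmas \ref{lem:cosets} and \ref{lem:phimidS}) that $\La$ is a tile with $|\La|=pq$ and $\La_{pq}=\Z_{pq}$ by \cite{Shi18}, one uses $s=2$ and $p,q\ge3$ to find pairs of points of $\La$ agreeing in the $s$-coordinate and differing only in the $p$- (resp.\ $q$-, $pq$-) coordinates of the projection; their differences have order $p$ or $pr$ (resp.\ $q$ or $qr$, $pq$ or $pqr$), so the relation $\La-\La\subseteq\{0\}\cup Z(S)$ gives $\Phi_p\Phi_q\Phi_{pq}\mid m_S$ in $\FF_r[x]$. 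Lemma \ref{incompletetriangle}, with the alternatives $\Phi_r\mid m_S$ and $\Phi_s\mid m_{\La}$ excluded by $r,s\nmid|S|=|\La|$, upgrades this to $\Phi_p\Phi_q\Phi_{pq}\mid m_S$ in $\Z[x]$, whence $S_{pq}=c\Z_{pq}$ with $c=1$ because $|S|=pq$, and $S\oplus\Z_{rs}=\Z_N$. (Alternatively you could pigeonhole $\La_{pqs}$ instead of $S_{pqs}$; either way the cyclotomic divisibility must land on $m_S$, not on $m_{\La}$.)
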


\begin{proof}
By applying Lemmata \ref{lem:cosets} and \ref{lem:phimidS} to the spectral pair $(\La, S)$, we obtain that $\La$ is a tile.
Thus $|\La|\in \{1,p,q,pq\}$ and by $\cite{Shi18}$ we have $\La_{|\La|}=\Z_{|\La|}$. Note that one element subsets are tiles, thus we assume $|\La|\ne 1$.

We distinguish two cases according to the conditions of the previous two Lemmata.

\medskip
{\bf Case 1.} Assume that for every $x \in \La$ we have $x +\Z_p \subseteq \La$ or $x+\Z_q \subseteq \La$.

If $q \nmid |\La|$ (i.e. $|\La|=p$), then there is no point $x$ in $\La$
such that $x+\Z_q \subset \La$. Thus $\La$ is a $\Z_p$-coset, hence $\La-\La\in \Z_p$. Therefore, by spectrality, $S$ intersects every $\Z_{qrs}$-coset once, when $S$ is a tile. A similar argument works if $|\La|=q$.

It remains to handle the case $|S|=|\La|=pq$.
Assume first that $(x+\Z_p) \cup (x+\Z_q)\subset \La$. Then $\Phi_p\Phi_q\Phi_{pq} \mid m_S$. Thus $S_{pq}=\Z_{pq}$ and since $|S|=pq$ we obtain that $S$ is a tile.

Assume now that there are $x$ and $y$ such that $x+\Z_p$ and $y+\Z_q$ are disjoint subsets of $\La$. Then $\La_{pq}\ne \Z_{pq}$, a contraction.

Finally, if $\La$ is the union of $\Z_p$-cosets or $\Z_q$-cosets only, then $S$ is a tile by Summary \ref{summary}.

\medskip
{\bf Case 2.} There exists an $x \in \La$ such that $x +\Z_p \not\subseteq \La$ and $x+\Z_q \not\subseteq \La$

Note that the proof of Lemma \ref{lem:phimidS} shows that there is no such spectral set except if $|\La|=pq$ and $s=2$.
Since $\La$ is tile we have $\La_{pq}=\Z_{pq}$. Since $p\ge 3$, there are at least $3$ elements of $\La_{pq}$ in each $\Z_p$-cosets. Two of them have the same $s$-coordinate as well since $s=2$. Thus $\Phi_p \mid m_S$ or $\Phi_{pr}\mid m_S$. Similarly we obtain  $\Phi_q \mid m_S$ or $\Phi_{qr}\mid m_S$. There are $3$ elements of $\La_{pq}$ such that the Hamming distance of any two of them is $2$. Thus again two of them have the same $s$-coordinate. This implies that $\Phi_{pq} \mid m_S$ or $\Phi_{pqr}\mid m_S$. Thus  $\Phi_p\Phi_q\Phi_{pq} \mid m_S$ in $\mathbb{F}_r[x]$.

By applying Lemma \ref{incompletetriangle} we obtain
$\Phi_p\Phi_q\Phi_{pq} \mid m_S$ since $r \nmid |S|$ and $s \nmid |S|$ (so cases \ref{inciii} and \ref{incii} are excluded).
Thus $S_{pq}=\Z_{pq}$ and since $|S|=pq$, we obtain that $S$ is a tile.

\end{proof}

\subsection{Proof of Case (II):   \texorpdfstring{$\Phi_{pqrs}\nmid m_S$}{} and \texorpdfstring{$\Phi_{pqrs}\nmid m_{\La}$}{}}

We start with a technical lemma as follows.
\begin{lem}\label{primitivedifference}
Let $N=p'p''q'q''$ and let $B\ssq\Z_N$ be primitive. Then, for every pair $p',p''$ of distinct primes with $p'p''\mid N$ we have
\[(B-B)\cap\bra{\Z_N^{\star}\cup p'\Z_N^{\star}\cup p''\Z_N^{\star}\cup p'p''\Z_N^{\star}}\neq\vn.\]
\end{lem}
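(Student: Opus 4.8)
\textit{Proof plan.}

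The plan is to translate the statement into coordinate language and reduce it to an elementary fact about subsets of a product of two cyclic groups. Write $N=p'p''q'q''$ and identify $\Z_N$ with $\Z_{p'}\oplus\Z_{p''}\oplus\Z_{q'}\oplus\Z_{q''}$ via the natural projections, so that each $x\in\Z_N$ has coordinates $x_{p'},x_{p''},x_{q'},x_{q''}$. Since $\Z_N^{\star}$, $p'\Z_N^{\star}$, $p''\Z_N^{\star}$, $p'p''\Z_N^{\star}$ are exactly the elements $x$ with $\gcd(x,N)$ equal to $1$, $p'$, $p''$, $p'p''$ respectively, their union consists precisely of those $x\in\Z_N$ with $q'\nmid\gcd(x,N)$ and $q''\nmid\gcd(x,N)$, that is, with $x_{q'}\neq0$ and $x_{q''}\neq0$. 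Hence the lemma asks us to exhibit $b_1,b_2\in B$ whose difference is simultaneously nonzero in the $\Z_{q'}$- and the $\Z_{q''}$-coordinates.

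First I would argue by contradiction: suppose that every two elements $b_1,b_2\in B$ have either the same $\Z_{q'}$-coordinate or the same $\Z_{q''}$-coordinate. Consider the map $\phi\colon B\to\Z_{q'}\times\Z_{q''}$, $\phi(b)=(b_{q'},b_{q''})$; the assumption says that any two distinct elements of the image $\phi(B)$ agree in the first coordinate or in the second. The combinatorial core of the argument is the elementary claim that such a subset of a product of two sets must have a constant first coordinate or a constant second coordinate: indeed, if two of its members differ in the first coordinate they are forced to share a common second coordinate $c$, and then any further member, compared against these two, is forced to have second coordinate $c$ as well; the cases $|\phi(B)|\leq1$ being trivial.

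By the symmetry between $q'$ and $q''$ we may thus assume that all elements of $B$ share one and the same $\Z_{q'}$-coordinate. Then $B$ lies in a single coset of the subgroup $H=\{x\in\Z_N:\ x_{q'}=0\}$, whence $B-B\ssq H-H=H$; but $H$ has order $N/q'$, a proper subgroup of $\Z_N$ since $q'\geq2$. So $B-B$ generates a proper subgroup of $\Z_N$, contradicting the primitivity of $B$ --- which is precisely the statement that $B-B$ generates $\Z_N$, equivalently that $B$ is not contained in a coset of a proper subgroup. As nothing in the argument distinguished $\{p',p''\}$ from the other unordered pair of prime divisors of $N$, the conclusion holds for every admissible pair $p',p''$.

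I expect no serious obstacle. The only points needing care are the little combinatorial claim about subsets of $\Z_{q'}\times\Z_{q''}$ and the remark that primitivity is translation invariant, so that the common $\Z_{q'}$-coordinate found above --- even if nonzero --- still confines $B-B$ to the proper subgroup $H$.
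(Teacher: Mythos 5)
Your proof is correct and is essentially the paper's argument: both reduce the target union to the set of elements that are nonzero in the $q'$- and $q''$-coordinates, and both use primitivity to produce elements of $B$ differing in each of these coordinates, finishing with the same two-out-of-three case analysis. The only cosmetic difference is that you argue by contradiction via a small lemma on subsets of $\Z_{q'}\times\Z_{q''}$, whereas the paper runs the identical case split directly on a triple $b,b',b''$.
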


\begin{proof}
Let $b\in B$; since $B$ is not primitive, $b-B$ is not contained in either $q'\Z_N$ or $q''\Z_N$, where $P=\set{p',p'',q',q''}$. So, let $b-b'\notin q'\Z_N$ and
$b-b''\notin q''\Z_N$. If either $b-b'\notin q''\Z_N$ or $b-b''\notin q'\Z_N$ then either $b-b'$ or $b-b''$ belongs to
\[\Z_N^{\star}\cup p'\Z_N^{\star}\cup p''\Z_N^{\star}\cup p'p''\Z_N^{\star}\]
as desired. On the other hand, if $b-b'\in q''\Z_N$ and $b-b''\in q'\Z_N$, then $b'-b''$ belongs to the aforementioned union, completing the proof.
\end{proof}
\begin{cor}\label{cor35}
Let $N=pqrs$ and let $(S,\La)$ be a spectral pair in $\Z_N$ such that $S$ is primitive. Then there exist $x$ and $y$ in $S$ such that they have different $p$- and $q$-coordinates.  With other words $\Phi_{N} \mid m_{\La}$ or $\Phi_{\frac{N}{r}} \mid m_{\La}$ or $\Phi_{\frac{N}{s}} \mid m_{\La}$ or $\Phi_{\frac{N}{rs}} \mid m_{\La}$.
\end{cor}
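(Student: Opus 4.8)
The plan is to derive this corollary as an essentially immediate consequence of Lemma~\ref{primitivedifference}, applied to $B=S$ with the pair $\{r,s\}$ playing the role of $\{p',p''\}$ (so that $\{p,q\}$ takes the role of $\{q',q''\}$), and then to translate the resulting membership statement about $S-S$ into a cyclotomic divisibility statement for $m_{\La}$ via the duality Lemma~\ref{duality} and the spectrality relation \eqref{spectralitymask}.

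First I would invoke Lemma~\ref{primitivedifference}: since $S$ is primitive and $N=rspq$, choosing $p'=r$ and $p''=s$ yields
\[(S-S)\cap\bra{\Z_N^{\star}\cup r\Z_N^{\star}\cup s\Z_N^{\star}\cup rs\Z_N^{\star}}\neq\vn,\]
so there exist $x,y\in S$ with $x-y$ lying in this union. Working in $\Z_N=\Z_p\oplus\Z_q\oplus\Z_r\oplus\Z_s$, note that for a squarefree $d\mid N$ every element of $d\Z_N^{\star}$ has order exactly $N/d$, and its set of vanishing coordinates is precisely the set of primes dividing $d$. Hence each element of $\Z_N^{\star}\cup r\Z_N^{\star}\cup s\Z_N^{\star}\cup rs\Z_N^{\star}$ has nonzero $p$-coordinate and nonzero $q$-coordinate, so $x$ and $y$ differ both in their $p$-coordinate and in their $q$-coordinate, which is the first assertion.

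For the equivalent formulation, let $m$ denote the order of $x-y$ in $\Z_N$; by the previous paragraph $pq\mid m$, so $m\in\set{pq,\,pqr,\,pqs,\,pqrs}=\set{N/rs,\,N/s,\,N/r,\,N}$. By Lemma~\ref{duality}, $(\La,S)$ is also a spectral pair, so \eqref{spectralitymask} applied to it gives $m_{\La}(\xi_N^{-(x-y)})=0$; since $\xi_N^{-(x-y)}$ is a primitive $m$-th root of unity and $m_{\La}\in\Z[x]$, this forces $\Phi_m\mid m_{\La}$. Therefore one of $\Phi_N\mid m_{\La}$, $\Phi_{N/r}\mid m_{\La}$, $\Phi_{N/s}\mid m_{\La}$, $\Phi_{N/rs}\mid m_{\La}$ holds, as claimed. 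There is no genuine obstacle here beyond bookkeeping; the single point requiring care is feeding the \emph{correct} pair of primes into Lemma~\ref{primitivedifference} — namely $r$ and $s$, the primes \emph{not} appearing among the prescribed coordinates, rather than $p$ and $q$ — so that the four cyclotomic polynomials that may divide $m_{\La}$ are exactly those not involving $\Phi_p$ or $\Phi_q$.
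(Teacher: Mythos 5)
Your proof is correct and follows exactly the route the paper intends: the corollary is stated without proof as an immediate consequence of Lemma~\ref{primitivedifference}, and your application of that lemma with $\{p',p''\}=\{r,s\}$, together with the duality Lemma~\ref{duality} and relation \eqref{spectralitymask} to convert the difference $x-y$ of order $m\in\{N,N/r,N/s,N/rs\}$ into $\Phi_m\mid m_{\La}$, is precisely the intended argument. Your closing remark about feeding in the complementary pair of primes is the one genuinely delicate point, and you handle it correctly.
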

Note that if $S$ is a spectral set, then the previous corollary can be applied to both $S$ and $\La$ since $\La$ is also spectral in this case.

Now, we handle a special case which will be crucial in the sequel.

\begin{prop}\label{phipqdiv}
Let $S \subset \Z_{pqrs}$ be a spectral set that is primitive and suppose $\Phi_p\Phi_q \mid m_S$ and $\Phi_N \nmid m_S$. Then $S$ is a tile.
\end{prop}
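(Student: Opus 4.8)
The plan is to exploit the hypothesis $\Phi_p\Phi_q\mid m_S$ together with $\Phi_N\nmid m_S$ by studying the divisibility of $m_S$ (and of $m_\La$) by the remaining cyclotomic factors $\Phi_{N/r}$, $\Phi_{N/s}$, $\Phi_{N/rs}=\Phi_{pq}$, and their "mixed" versions modulo $r$ or $s$. First I would invoke Corollary \ref{rsndiv} to assume $r,s\nmid|S|=|\La|$, so that $\Phi_r,\Phi_s\nmid m_S,m_\La$; and by Summary \ref{summary} I may assume $S$ and $\La$ are primitive and $0\in S$, $0\in\La$. Applying Lemma \ref{twoprimes} to $\Phi_{pq}\mid m_S$ (which follows from $\Phi_p\Phi_q\mid m_S$ via the structure of $S_{pq}$, or is part of what we must extract) one gets that $|S|=pk+q\ell$ with $k,\ell>0$ since $m_S(\xi_p)=m_S(\xi_q)=0$; in particular $|S|\ge p+q$, and combined with divisibility constraints this will pin $|S|$ down.

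The main line of attack is: since $\Phi_p\Phi_q\mid m_S$, the projection $S_{pq}\subseteq\Z_{pq}$ is, by Lemma \ref{twoprimes}, a sum of $\Z_p$- and $\Z_q$-cosets in $\Z_{pq}$. The goal is to show $S_{pq}=\Z_{pq}$ and $|S|=pq$, which immediately gives that $\frac{N}{|S|}\Z_N=\Z_{rs}$ is a tiling complement of $S$. To force $S_{pq}=\Z_{pq}$ I would argue that $S$ must fill up the $\Z_{pq}$-fibers: first show that in $\Z_N$ the relation $\Phi_N\nmid m_S$ together with Corollary \ref{cor35} applied to $\La$ forces some $\Phi_{N/r}$, $\Phi_{N/s}$, or $\Phi_{N/rs}$ to divide $m_\La$ (or the symmetric statement for $m_S$). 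Then, using Lemma \ref{lemprop1} on $\Z_N$ with the factor $\Phi_N\Phi_{N/s}$ or $\Phi_N\Phi_{N/r}$ of $m_\La$, together with Lemma \ref{l1} or Lemma \ref{l2}, one concludes that $\La$ is a union of $\Z_p$- or $\Z_q$-cosets, hence $S$ is a tile by Summary \ref{summary}. The case analysis will split according to which of the mixed cyclotomics $\Phi_{N/r},\Phi_{N/s},\Phi_{pq}$ actually divide $m_S$ versus $m_\La$; the incomplete-triangle Lemma \ref{incompletetriangle} (over $\FF_r$ or $\FF_s$) is the right tool to upgrade a modular divisibility $\Phi_p\Phi_q\Phi_{pq}\mid m_S$ in $\FF_r[x]$ into the genuine divisibility $\Phi_p\Phi_q\Phi_{pq}\mid m_S$ in $\Z[x]$, since $r,s\nmid|S|$ kills the alternative conclusions of that lemma.

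Concretely, I expect the argument to proceed: (1) reduce $m_S\bmod r$; since $\Phi_p\Phi_q\mid m_S$ over $\Z$, a fortiori over $\FF_r$, and one wants to also get $\Phi_{pq}\mid m_S$ over $\FF_r$; this comes from a counting/pigeonhole argument on how $S$ distributes over $\Z_{pq}$-cosets, using that $S_{prs}$ and $S_{qrs}$ and the various fibers are proper sets because $r,s\nmid|S|$ (so $(S-S)$ avoids $\frac{N}{r}\Z_N^\star$ and $\frac{N}{s}\Z_N^\star$). (2) Apply Lemma \ref{incompletetriangle} with $p_1=r$, $\{p_2,p_3\}=\{p,q\}$, $p_4=s$ to deduce that case \textbf{(i)} ($\Phi_r\mid m_S$) is excluded by $r\nmid|S|$, case \textbf{(iii)} ($\Phi_s\mid m_\La$) is excluded by $s\nmid|\La|$, hence case \textbf{(ii)}: $\Phi_p\Phi_q\Phi_{pq}\mid m_S$ over $\Z$. (3) Then $m_S\equiv c(1+x+\dotsb+x^{pq-1})\bmod(p\cdot\text{something})$ — more precisely $S_{pq}=c\Z_{pq}$ for some $c\ge1$; combined with $\Phi_N\nmid m_S$ and the size constraints from Lemma \ref{twoprimes}, conclude $c=1$ and $|S|=pq$, so $S\oplus\Z_{rs}=\Z_N$.

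The hard part will be step (1)/(3): showing the jump from $\Phi_p\Phi_q\mid m_S$ to $\Phi_p\Phi_q\Phi_{pq}\mid m_S$ over $\FF_r$ (or handling the cases where instead the mixed factors land on $m_\La$, forcing $\La$ into cosets). This requires a careful bookkeeping of which $\Z_{pq}$-fibers of $S$ are nonempty and of what size, using repeatedly that $r,s\nmid|S|$ forces properness of several projections; the small primes $r=2$ or $s=2$ (as in Lemma \ref{lem:phimidS}) are likely to need separate, more delicate treatment, since then the "differ only in the $s$-coordinate" obstructions used elsewhere become weaker and one has fewer cubes to play with.
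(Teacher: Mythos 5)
Your skeleton is essentially the paper's: the paper proves the proposition in two steps, first (its Lemma \ref{lpq1}) that $S_{pq}=c\Z_{pq}$ for some $c\in\N$, and then (its Lemma \ref{lpq2}) that $c=1$, after which $|S|=pq$ and $S$ is a complete set of representatives of the order-$rs$ subgroup. The first step is obtained exactly as in your step (2): Corollary \ref{cor35}, applied to the primitive set $\La$, gives that one of $\Phi_{pq},\Phi_{pqr},\Phi_{pqs}$ divides $m_S$; in the latter two cases the identity $\Phi_{pqr}\equiv\Phi_{pq}^{r-1}$ in $\FF_r[x]$ yields $\Phi_p\Phi_q\Phi_{pq}\mid m_S$ over $\FF_r$, and Lemma \ref{incompletetriangle} with $p_1=r$, $p_4=s$ (options \textbf{(i)} and \textbf{(iii)} being killed by $r,s\nmid|S|$) upgrades this to divisibility over $\Z$. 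Note that this divisibility comes from primitivity of $\La$ via Corollary \ref{cor35}, not from the ``counting/pigeonhole argument on how $S$ distributes over $\Z_{pq}$-cosets'' you propose in step (1); also, your side branch through Lemmata \ref{l1} and \ref{l2} is a dead end here, since both require $\Phi_N\mid m_S$ or $\Phi_N\mid m_\La$, which is excluded in the setting where this proposition is invoked.

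The genuine gap is your step (3). You write that $c=1$ follows from ``$\Phi_N\nmid m_S$ and the size constraints from Lemma \ref{twoprimes}'', but $|S|=cpq$ is consistent with Lemma \ref{twoprimes} for every $c\geq1$, so no size count rules out $c\geq2$. In the paper this step is Lemma \ref{lpq2}, the substantive half of the proof: one takes $t_1,t_2\in S$ in the same $\Z_{pq}$-fiber, observes that they must differ in \emph{both} the $r$- and the $s$-coordinate (else $\Phi_r\mid m_\La$ or $\Phi_s\mid m_\La$), uses that $S$ contains no pair at Hamming distance $4$ (which needs $\Phi_N\nmid m_\La$, an input your proposal never secures), and then performs a case analysis: $p,q\geq3$ and $c\geq3$ each lead to a forced distance-$4$ pair, and the residual case $p=c=2$ requires a bespoke argument that the resulting $4q$-element configuration cannot be spectral (via $\Phi_{rs}\mid m_S$, the 2D cube rule on $S_{rs}$, and the fact that $S$ occupies only two $r$- and two $s$-coordinates). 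You correctly identify this as ``the hard part'', but you do not supply it, and the ingredients you name would not produce it; as written the proposal establishes $S_{pq}=c\Z_{pq}$ but not the conclusion.
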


\begin{proof}
We prove this statement in two steps.
\begin{lem}\label{lpq1}
Under the assumptions of Proposition \ref{phipqdiv} we have $S_{pq}=c\Z_{pq}$ for some $c\in \mathbb{N}$.
\end{lem}
\begin{proof}
By Corollary \ref{cor35}, if $\Phi_N\nmid m_S$, then either $\Phi_{pq}\mid m_S$, or $\Phi_{pqr}\mid m_S$ or $\Phi_{pqs}\mid m_S$.
If $\Phi_p \Phi_q \Phi_{pq}\mid m_S$, then we are done. Thus, by contradiction, let us assume $\Phi_{pq} \nmid m_S$. By the symmetry of the role of $r$ and $s$ we can assume that $\Phi_{pqr} \mid m_S$.
Thus \[\Phi_p(x)\Phi_q(x) \Phi_{N/s}(x) = \Phi_p(x) \Phi_q(x) \Phi_{pq}(x)^{r-1} \mid m_S(x) \textrm{ in }\Z_r[x].\]
We may apply Lemma \ref{incompletetriangle}.
Since $r \nmid |S|=|\La|$ and $s \nmid |S|=|\La|$ we obtain $\Phi_p \Phi_q\Phi_{pq} \mid m_S$.
\end{proof}

\begin{lem}\label{lpq2}
Under the assumptions of Proposition \ref{phipqdiv}
if $S_{pq}=c\Z_{pq}$ for some $0<c\in \N$, then $c=1$.
\end{lem}

\begin{proof}
We proceed by contradiction. Take two elements $t_1, t_2 \in S$ with the same $p$- and $q$-coordinates. Their $r$- or $s$-coordinates must be different since otherwise we have  $\Phi_{s} \mid m_{\La}$ or $\Phi_r \mid m_{\La}$, which was excluded. So let $t_1=(a_1,a_2,a_3,a_4)$ and $t_2=(a_1,a_2,a_3',a_4') \in S$, where $a_3 \ne  a_3'$ and $a_4 \ne  a_4'$.

As $p,q\ge 2$ and $c>1$, clearly there are elements $t_3, t_4$ having $a'_1$ in the first and $a'_2$ in their second coordinate, where $a_1 \ne a'_1$ and $a_2\ne a'_2$.
Since $\Phi_N\nmid m_{\La}$, it implies that there are no points of Hamming-distance 4 in $S$. Thus,
we may assume that $t_3=(a'_1,a'_2,a_3,a'_4),  t_4=(a'_1,a'_2,a'_3,a_4)$, since $t_3$ and $t_4$ cannot differ only in their $r$- or $s$-coordinate.

 If $p, q\ge 3$, then there are $a''_1 \not\in \{a_1, a'_1\}$ and $a''_2\not\in \{a_2, a'_2\}$.
 Then in the $\Z_{rs}$-coset $(a_1'',a_2'',*,*)$ every element is of Hamming distance $4$ from one of $t_1,t_2,t_3,t_4$, a contradiction.

If $c\ge 3$, then we also get a contradiction, as there is an element $t=(a_1, a_2, a''_3, a''_4)$ with  $a''_3 \not\in \{a_3, a'_3\}$ and $a''_4\not\in \{a_4, a'_4\}$ and so $d_H(t, t_4)=4$.

Hence we can assume without loss of generality that $c=2$ and $p=2$. So every element has either $a_1$ or $a'_1$ in its first coordinate. Iterating the argument as above, we get that for every element having $a_1$ in its first coordinate is of the form $(a_1, *, a_3, a_4)$ or $(a_1,*, a'_3, a'_4)$. Similarly, every element having $a'_1$ in its first coordinate is of the form $(a'_1, *, a_3, a'_4)$ or $(a'_1,*, a'_3, a_4)$. Moreover all of these elements are in $S$. This implies that $|S|=4q$. Thus $S$ is not a tile. Therefore we have to show that $S$ is not a spectral set.

By contradiction, assume that $(S, \La)$ is a spectral pair for some $|S|=|\La|=4q$. Then there is a $\Z_{pq}$-coset which contains at least 2 elements of $\La$, i.e., these elements have the same $p$- and $q$-coordinates. Then their $r$- and $s$- coordinate should be different, otherwise $\Phi_r \mid m_S$ or $ \Phi_s \mid m_{S}$, which is excluded.  This implies that $\Phi_{rs}\mid m_{S}$. Hence the 2D cube rule holds for $S_{rs}$ on $\Z_{rs}$, i.e., $S_{rs}$ is the sum of $\Z_r$- and $\Z_s$-cosets. All the elements of $S$ can have only 2 different $r$-coordinates and 2 different $s$-coordinates, thus $S_{rs}$ does not contain either a $\Z_r$-coset or a $\Z_s$-coset. This contradiction shows that $S$ is not a spectral set.
\end{proof}

\noindent
By Lemma \ref{lpq1}, $S_{pq}=c\Z_{pq}$ for some $0<c\in \mathbb{N}$. By Lemma \ref{lpq2}, $c=1$, and then $S$ is a tile, finishing the proof of Proposition \ref{phipqdiv}.
\end{proof}

\begin{lem}\label{pqrsla} Let $(S,\La)$ be a spectral pair of $\Z_{pqrs}$. Assume $\Phi_N \nmid m_S$ and $\Phi_N \nmid m_{\La}$. Then
$$\Phi_{pqr}\mid m_S \Longleftrightarrow \Phi_{pqr}\mid m_{\La}.$$
$$\Phi_{pqs}\mid m_S \Longleftrightarrow \Phi_{pqs}\mid m_{\La}.$$
\end{lem}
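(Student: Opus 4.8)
The plan is to use the duality of spectral pairs to reduce to one implication, translate it into a combinatorial statement about the projection to $\Z_{pqr}$, and exploit the $3$D cube rule there. By Lemma~\ref{duality} the pair $(\La,S)$ is again spectral, and the two equivalences are interchanged by swapping $r$ and $s$, so it is enough to prove $\Phi_{pqr}\mid m_S\Rightarrow\Phi_{pqr}\mid m_\La$. Throughout I would use the standing reductions of the whole proof: $r,s\nmid\abs S=\abs\La$ (Corollary~\ref{rsndiv}), hence $\Phi_r,\Phi_s\nmid m_S,m_\La$, and $S,\La$ are primitive (Summary~\ref{summary}).

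First I would suppose $\Phi_{pqr}\mid m_S$ and, for contradiction, $\Phi_{pqr}\nmid m_\La$. Since $(\La,S)$ is spectral, \eqref{divclass} gives
\[S-S\ \ssq\ \set0\cup\bigcup_{e\mid N,\ \Phi_e\mid m_\La}\set{x\in\Z_N:\mathrm{ord}(x)=e},\]
so, as $\Phi_{pqr}\nmid m_\La$ and $\Phi_N\nmid m_\La$, no difference of two elements of $S$ has order divisible by $pqr$; equivalently, any two distinct elements of $S$ agree in at least one of the $p$-, $q$-, $r$-coordinates. Since also $\Phi_s\nmid m_\La$, the projection $S\to S_{pqr}$ is injective, so $A:=S_{pqr}\ssq\Z_{pqr}$ is a genuine (multiplicity-free) set; it is primitive because $S$ is, it satisfies $\Phi_{pqr}\mid m_A$ because $\Phi_{pqr}\mid m_S$, hence it obeys the $3$D cube rule (Theorem~\ref{cuberule}), and the agreement property passes to $A$.

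Next I would show the cube rule pins down the structure of $A$. Fixing $a\neq a'$ in $\Z_p$ and setting $g(b,c)=\bs{1}_A(a,b,c)-\bs{1}_A(a',b,c)\in\set{-1,0,1}$, the $3$D cube rule applied to the cube with coordinates $\set{a,a'}\times\set{b,b'}\times\set{c,c'}$ yields $g(b,c)-g(b',c)-g(b,c')+g(b',c')=0$ for all $b\neq b'$, $c\neq c'$, i.e.\ $g(b,c)=\alpha(b)+\beta(c)$ for integer functions $\alpha,\beta$. If neither $\alpha$ nor $\beta$ were constant, then choosing $\alpha(b_1)<\alpha(b_2)$, $\beta(c_1)<\beta(c_2)$ forces $g(b_1,c_1)=-1$ and $g(b_2,c_2)=1$ (as $g(b_2,c_2)\geq g(b_1,c_1)+2$ and $g$ is $\set{-1,0,1}$-valued), i.e.\ $(a',b_1,c_1),(a,b_2,c_2)\in A$ differ in all three coordinates, contradicting the agreement property; so $\alpha$ or $\beta$ is constant. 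Hence for every pair of $p$-coordinates the two ``slabs'' of $A$ differ by a union of $\Z_q$-lines or of $\Z_r$-lines, and symmetrically for the $q$- and $r$-coordinates. The last step is to check that a primitive $A$ with the agreement property cannot have this slab-structure: for $p=2$ one sees quickly that a non-trivial slab-difference puts a whole $\Z_q$-line into $A$ and then agreement forces $A$ into a single $\Z_{qr}$-coset, while a trivial slab-difference makes $A=\Z_2\times A'$ with $A'\ssq\Z_q\oplus\Z_r$ agreeing and not constant in either coordinate, which is impossible since a nonempty agreeing subset of a two-fold product is constant in one coordinate. For $p\geq3$ the same idea works but one must track the mutual consistency of the $\binom p2$ slab-differences; either way $A$ ends up in a proper coset of $\Z_{pqr}$, contradicting primitivity, so $\Phi_{pqr}\mid m_\La$.

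I expect the main obstacle to be exactly this last step: the reduction and the additivity argument are routine, but turning the local ``slab-difference is linear in some direction'' information into the global statement that $A$ collapses into a proper coset requires a somewhat delicate case analysis once the primes are allowed to exceed $2$.
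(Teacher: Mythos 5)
Your reduction is sound and matches the paper's: by duality and the $r\leftrightarrow s$ symmetry it suffices to show $\Phi_{pqr}\mid m_S\Rightarrow\Phi_{pqr}\mid m_\La$; the contrapositive of \eqref{divclass} applied to the pair $(\La,S)$ correctly turns $\Phi_{pqr},\Phi_N\nmid m_\La$ into the statement that no two elements of $A:=S_{pqr}$ have Hamming distance $3$; and $A$ is a genuine set, primitive, and obeys the 3D cube rule. Your additive-separability step is also correct: $g(b,c)=\bs{1}_A(a,b,c)-\bs{1}_A(a',b,c)=\alpha(b)+\beta(c)$, and the agreement property forces $\alpha$ or $\beta$ to be constant. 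The genuine gap is exactly where you flag it: the deduction that the resulting ``slab structure'' confines a primitive $A$ to a proper coset is only sketched for $p=2$ and explicitly deferred for $p\geq3$ (``one must track the mutual consistency of the $\binom{p}{2}$ slab-differences''). That deferred step \emph{is} the content of the lemma -- for $p\geq 3$ a nontrivial slab difference only yields $A\ssq(\{a\}\times\Z_q\times\Z_r)\cup(\Z_p\times\Z_q\times\{c_0\})$, which is not yet a proper coset, and reconciling the possibly different constancy directions across pairs $(a,a')$ is a real case analysis that you have not carried out. As written, the proof is incomplete.

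The paper closes the argument much more directly, without any slab decomposition, and you could substitute its finish verbatim. Since $A$ is primitive, not all pairwise distances are $1$ (else $A$ lies on a single line), so there exist $x,y\in A$ with $d_H(x,y)=2$, say differing in the $p$- and $q$-coordinates. In any 3D cube through $x$ and $y$, the two vertices at odd distance from $x$ that leave the $\Z_{pq}$-plane of $x,y$ are at Hamming distance $3$ from $y$ or from $x$, hence are not in $A$; the cube rule (alternating sum $=0$, with $x,y$ contributing $+1$ each and every multiplicity in $\{0,1\}$) then forces the remaining two odd-parity vertices -- precisely the other two corners of the 2D square through $x,y$ -- to lie in $A$. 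Now every point of $\Z_{pqr}$ outside that $\Z_{pq}$-plane is at Hamming distance $3$ from one of these four square vertices, so $A$ is contained in a proper coset, contradicting primitivity. This replaces your entire final step in a few lines and needs no case split on the size of $p$.
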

\begin{proof}
The proof is the same for the two statements and by duality it is enough to show that if $\Phi_{pqr}\mid m_S$, then there is a pair of points in $S_{pqr}$ of Hamming distance 3. This implies that the preimage of these points are of distance 3 in $S$ as $\Phi_N\nmid m_S$, which implies $\Phi_{pqr}\mid m_{\La}$.

We prove it by contradiction. If there is no pair of points of Hamming distance 3 in $S_{pqr}$, then either all of the points are of distance 1 from each other, but then one can easily see that $S$ is not primitive or there is a pair of points $x,y\in S_{pqr}$ of Hamming distance 2.
Since the 3D cube rule  holds on $S_{pqr}$ and $S_{pqr}$ is a set (as $\Phi_s \nmid m_S$), there are at least two other points in any 3D cube containing both $x$ and $y$ of odd distance from $x$. There are $4$ points of odd Hamming distance from $x$ on each of these 3D cubes. Two of them are excluded, since each of them are of Hamming distance $3$ from either $x$ or $y$.
The remaining two points that are on the 2D cube spanned by $x$ and $y$ must be in the set $S_{pqr}$ (by 3D cube rule).
Then all of the points of $\Z_{pqr}$ which is not in the plane determined by $x,y$ is of Hamming distance 3 from one of these 4 points, so none of them can be in $S_{pqr}$.
By Summary \ref{summary}, $S$ is primitive, thus $S_{pqr}$ should also be primitive, a contradiction.
\end{proof}

\begin{lem}\label{lemris}
Let $(S,\La)$ be a spectral pair of $\Z_N$. Assume $\Phi_N \nmid m_S$ and $\Phi_N \nmid m_{\La}$. If $\Phi_{pqr}\mid m_{\La}$ $($resp. $\Phi_{pqs}\mid m_{\La})$, then every pair of points in $\La$ having different $p$- and $q$-coordinates must have different $r$-coordinate $($ resp. $s$-coordinate$)$ as well or $S$ is a tile.
\end{lem}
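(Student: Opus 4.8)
The plan is to prove the contrapositive: assuming $\Phi_{pqr}\mid m_\La$ and that some pair $\la_1,\la_2\in\La$ has different $p$- and $q$-coordinates but the same $r$-coordinate, I will show $S$ tiles $\Z_N$. As throughout Case~(II), by Corollary~\ref{rsndiv} and Summary~\ref{summary} we may assume $r,s\nmid |S|=|\La|$, so that $\Phi_r,\Phi_s\nmid m_S,m_\La$ and $S,\La$ are primitive; by Lemma~\ref{pqrsla} we also have $\Phi_{pqr}\mid m_S$, and since $\Phi_s\nmid m_S$ and $\Phi_s\nmid m_\La$, relation \eqref{divclass} shows that no two elements of $S$ (resp.\ of $\La$) differ only in the $s$-coordinate, so $S_{pqr}$ and $\La_{pqr}$ are genuine primitive subsets of $\Z_{pqr}$ on which the 3D cube rule holds.

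The first step extracts a cyclotomic divisibility from the bad pair: the difference $\la_1-\la_2$ has nonzero $p$- and $q$-coordinates and zero $r$-coordinate, hence it generates the subgroup of order $pq$ when its $s$-coordinate vanishes and the subgroup of order $pqs$ otherwise, so \eqref{divclass} applied to $\La$ as a spectrum of $S$ gives $\Phi_{pq}\mid m_S$ in the first case and $\Phi_{pqs}\mid m_S$ in the second. In the second case Lemma~\ref{pqrsla} yields $\Phi_{pqs}\mid m_\La$ as well; running the distance-$2$/distance-$3$ cube-rule analysis of Lemma~\ref{pqrsla} on $\La_{pqs}$ (which is a set since $\Phi_r\nmid m_\La$), together with $\Phi_N\nmid m_\La$, will either force $\La$ into a proper coset or make it a union of $\Z_p$- or $\Z_q$-cosets — whence $S$ tiles by Summary~\ref{summary} — or else return us to $\Phi_{pq}\mid m_S$. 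Thus the core case is $\Phi_{pq}\mid m_S$ together with $\Phi_{pqr}\mid m_S$.

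In the core case the target is $\Phi_p\Phi_q\mid m_S$, since then Proposition~\ref{phipqdiv} (applicable because $S$ is primitive and $\Phi_N\nmid m_S$) immediately gives that $S$ is a tile. Reducing modulo $r$, the factor $\Phi_{pqr}$ becomes $\Phi_{pq}^{\,r-1}$ by \eqref{cyclomodp}, so $\Phi_{pq}^{\,r}\mid m_S$ in $\FF_r[x]$; combining this with the 3D cube rule on $S_{pqr}$ and with the fact that, $\La_{pqr}$ being primitive, the distance-$2$ pair $\bar\la_1,\bar\la_2\in\La_{pqr}$ must be accompanied by a distance-$3$ pair (exactly as in the proof of Lemma~\ref{pqrsla}), one traces enough forced membership to obtain $\Phi_p\Phi_q\Phi_{pq}\mid m_S$ in $\FF_r[x]$. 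At that point Lemma~\ref{incompletetriangle}, applied with $p_1=r$, $\{p_2,p_3\}=\{p,q\}$ and $p_4=s$, takes over: its option $\Phi_r\mid m_S$ and its option $\Phi_s\mid m_\La$ are both excluded by $r,s\nmid|S|=|\La|$, leaving $\Phi_p\Phi_q\Phi_{pq}\mid m_S$ over $\Z$, and Proposition~\ref{phipqdiv} concludes.

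I expect the main obstacle to be precisely the middle of the core case — passing from $\Phi_{pq}\mid m_S$ and $\Phi_{pqr}\mid m_S$ to $\Phi_p\Phi_q\mid m_S$, i.e.\ splitting the single cyclotomic factor $\Phi_{pq}$ into $\Phi_p$ and $\Phi_q$ — which needs the cube rule on $S_{pqr}$ to be tracked carefully, together with the low-prime exceptions ($r=2$ or $s=2$), where there is no room to build auxiliary cubes in the $r$-direction and one must instead count $|S|$ and $|\La|$ directly, much as in Lemma~\ref{lem:phimidS} (Case~2) and Lemma~\ref{lpq2}.
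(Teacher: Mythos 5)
Your proposal assembles the right ingredients (the 3D cube rule on $\La_{pqr}$, Lemma \ref{pqrsla}, Lemma \ref{incompletetriangle} with the options $\Phi_r\mid m_S$ and $\Phi_s\mid m_{\La}$ excluded, and Proposition \ref{phipqdiv} as the closing step), but the two places where you write ``will either force \dots or else return us to \dots'' and ``one traces enough forced membership to obtain $\Phi_p\Phi_q\Phi_{pq}\mid m_S$'' are precisely the content of the lemma, and they are asserted rather than proved. Concretely: from $\Phi_{pq}\mid m_S$ and $\Phi_{pqr}\mid m_S$ you get $\Phi_{pq}^{\,r}\mid m_S$ in $\FF_r[x]$, but no power of $\Phi_{pq}$ ever yields the factors $\Phi_p$ and $\Phi_q$ separately; to split them you must exhibit pairs of points of $\La$ whose difference lies in $\frac{N}{p}\Z_N^\star\cup\frac{N}{ps}\Z_N^\star$ (and likewise for $q$ and for $pq$), and the natural field for the resulting divisibility is $\FF_s$, not $\FF_r$, because the only ambiguity in lifting points of $\La_{pqr}$ back to $\Z_N$ sits in the $s$-coordinate. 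This is what the paper does: the cube rule applied to the distance-$2$ pair $x',y'\in\La_{pqr}$ forces two further points $z',w'$ of the 3D cube into $\La_{pqr}$, and the argument branches on whether one of them lies on the 2D face spanned by $x',y'$ (which produces the distance-$1$ pairs needed for $\Phi_p\Phi_q\Phi_{pq}\mid m_S$ over $\FF_s$) or whether both escape in the $r$-direction.

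That second branch is entirely missing from your plan, and it is where the real work is: there the desired divisibility does \emph{not} follow, and the paper instead shows $d_H(x,w)=d_H(y,z)=3$ in $\Z_N$, then for $r\ge3$ builds a second cube to produce two points of $\La$ differing only in the $r$-coordinate (contradicting $\Phi_r\nmid m_S$), and for $r=2$ runs a separate primitivity count (splitting further on whether $x$ and $y$ share their $s$-coordinate, and in the latter subcase bootstrapping through $\Phi_{pqs}\mid m_{\La}$ and $s>2$). Your closing remark correctly identifies the splitting of $\Phi_{pq}$ and the small-prime cases as ``the main obstacle,'' but flagging the obstacle is not the same as overcoming it; as written, the proposal is a programme for a proof rather than a proof. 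The same criticism applies to your preliminary branch $\Phi_{pqs}\mid m_S$: invoking ``the distance-$2$/distance-$3$ analysis of Lemma \ref{pqrsla}'' does not by itself show that $\La$ becomes non-primitive, a union of cosets, or that one is returned to $\Phi_{pq}\mid m_S$.
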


\begin{proof}
Suppose there are points $x,y\in \La$ with different  $p$- and $q$-coordinates and with the same $r$-coordinate. Then their projection $x',y'\in\La_{pqr}$ is of Hamming distance 2. By assumption the 3D cube rule holds for $\La_{pqr}$, thus there are at least two more points $z',w'$ in every 3D cube containing $x'$ and $y'$.

We have two cases. If one of these additional points are in 2D cube determined by $x',y'$,
then this implies that $\Phi_p \Phi_q \Phi_{pq}\mid m_S$ over $\mathbb{F}_s$.
Thus by Lemma \ref{incompletetriangle} we have that either $\Phi_p\Phi_q\mid m_S$ and $\Phi_{pq} \mid m_S$, then $S$ is a tile and we are done by Proposition \ref{phipqdiv}; or $\Phi_r\mid m_S$ or $\Phi_s\mid m_{\La}$, which cases were excluded.
Otherwise both of the additional points $z',w'$ differ in their $r$-coordinates from $x', y'$, respectively. Without loss of generality we can assume that $d_H(x', w')=d_H(y',z')=3$.
Hence, the preimage $z$ of $z'$ has the same $s$-coordinate as $y$; and the preimage $w$ of $w'$ have the $s$-coordinate as $x$, because their distance in $\Z_{pqrs}$ is at least $3$ and it cannot be 4, since $\Phi_s\nmid m_S$. Thus $d_H(x, w)=d_H(y,z)=3$.

If $r\ge 3$, then taking another cube containing $x'$ and $y'$ we get $z''$ and $w''$ similarly as above and then $z'$ and $z''$ differ only in their $r$-coordinates, thus $\Phi_r\mid m_S$, which was excluded and we are done.

If $r=2$, then we have two cases whether $x$ and $y$ have the same or different $s$-coordinates. If they have the same $s$-coordinates, then so do $z$ and $w$. By the first part of the proof, there is no more points of the 3D cube $C(x',w')$ in $\La_{pqr}$. On the other hand, using the fact that $r=2$, one can see that every point of $\Z_{pqr}\setminus C(x',w')$ is of Hamming distance 3 from at least one of $x',y',z',w'$. Hence, every point of $S$ has the same $s$-coordinate thus $\La$ is not primitive so $S$ is a tile, by Summary \ref{summary}.
Assume now that $x'$ and $y'$ have different $s$-coordinates. This implies that $\Phi_{pqs}\mid m_S$. By Lemma \ref{pqrsla}, it implies that $\Phi_{pqs}\mid m_{\La}$ and the projection of $x$ and $w$ in $\La_{pqs}$ is of distance 2. Repeating the argument above for these case and using $s>2$ we get a contradiction.
\end{proof}

\begin{prop}\label{proplatilestile}
Let $(S,\La)$ be a spectral pair. Assume that $\Phi_N \nmid m_S$, $\Phi_N \nmid m_{\La}$ and $\La$ is a tile. Then $S$ is a tile.
\end{prop}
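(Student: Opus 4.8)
The plan is to combine the reductions of Summary~\ref{summary} with a case analysis on $\abs{\La}$ driven by the tiling hypothesis. First I would invoke Summary~\ref{summary} to assume that $S$ and $\La$ are primitive and that $r,s\nmid\abs{S}=\abs{\La}$, so that $\Phi_r,\Phi_s\nmid m_S,m_{\La}$. Since $\La$ is a tile of the square-free group $\Z_N$, Proposition~4.1 of \cite{Shi18} gives $\La_{\abs{\La}}=\Z_{\abs{\La}}$ and $\abs{\La}\mid N$; together with $r,s\nmid\abs{\La}$ this forces $\abs{\La}\in\{1,p,q,pq\}$. The case $\abs{\La}=1$ is trivial. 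For $\abs{\La}\in\{p,q\}$, say $\abs{\La}=p$, one has $\La_p=\Z_p$ and hence $\Phi_p\mid m_{\La}$; if also $\Phi_p\mid m_S$ then $S_p=\Z_p$, so $S$ is a complete set of coset representatives of $\Z_{qrs}$ and hence a tile, while if $\Phi_p\nmid m_S$ a short argument (Corollary~\ref{cor35} applied to $(\La,S)$, Lemma~\ref{twoprimes}, Lemma~\ref{pqrsla}, the constraint $r,s\nmid\abs{S}$, and Theorem~\ref{T6} in the residual small-cardinality situations) shows no such spectral set exists.

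The heart of the matter is $\abs{\La}=pq$. Here $\La_{pq}=\Z_{pq}$, so $\La$ meets every $\Z_{rs}$-coset exactly once; writing $\La=\{\la_{a,b}=(a,b,g_r(a,b),g_s(a,b)):(a,b)\in\Z_p\oplus\Z_q\}$ with $\la_{0,0}=0$, primitivity makes $g_r$ and $g_s$ non-constant, and moreover $\Phi_p\Phi_q\Phi_{pq}\mid m_{\La}$. The key reduction is: it suffices to produce either $\Phi_p\Phi_q\mid m_S$, in which case $S$ tiles by Proposition~\ref{phipqdiv} (using $\Phi_N\nmid m_S$ and $S$ primitive), or $\Phi_{pq}\mid m_S$, in which case Lemma~\ref{twoprimes} together with $\abs{S}=pq$ forces $S_{pq}=\Z_{pq}$ and $S$ tiles with complement $\Z_{rs}$.

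To produce such a divisibility I would split on whether $\Phi_{pqr}$ or $\Phi_{pqs}$ divides $m_{\La}$. If neither does, then by Lemma~\ref{pqrsla} neither divides $m_S$; choosing $\la_{a,b}\in\La$ with $a,b\neq0$, the element $\la_{a,b}=\la_{a,b}-\la_{0,0}\in\La-\La$ has nonzero $p$- and $q$-coordinates, so by~\eqref{divclass} it lies in $d\Z_N^{\star}$ with $\Phi_d\mid m_S$ for some $d\in\{pq,pqr,pqs,N\}$; since $pqr,pqs$ are excluded and $\Phi_N\nmid m_S$, we get $\Phi_{pq}\mid m_S$, hence $S$ is a tile. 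If instead $\Phi_{pqr}\mid m_{\La}$ (the case $\Phi_{pqs}\mid m_{\La}$ being symmetric), Lemma~\ref{lemris} gives that either $S$ is a tile, or every two points of $\La$ with distinct $p$- and $q$-coordinates have distinct $r$-coordinates; the latter says exactly that each fibre $g_r^{-1}(v)\subseteq\Z_p\oplus\Z_q$ lies inside a single $\Z_p$-line or a single $\Z_q$-line. One then rewrites $m_{\La}(\xi_{pqr})=0$ --- using that the minimal polynomial of $\xi_p$ over $\Q(\xi_{qr})$ is $1+x+\dotsb+x^{p-1}$ --- as a system of equations on these fibres, and shows, using primitivity of $\La$ and the analogous information coming from $\Phi_{pqs}$, that this system is inconsistent (or again yields $\Phi_{pq}\mid m_S$).

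The step I expect to be the main obstacle is precisely this last one: deriving a contradiction from the fibre structure of $g_r$ forced by Lemma~\ref{lemris}, the cyclotomic vanishing $m_{\La}(\xi_{pqr})=0$, and primitivity of $\La$. This demands a careful subdivision according to whether the fibres are ``horizontal'' or ``vertical'', to how many values $g_r$ and $g_s$ take, and to small values of the primes; it is also the place where one must check that projections such as $S_{pq}$ are genuine sets rather than multisets. A secondary difficulty is the small-cardinality cases $\abs{\La}\in\{p,q\}$ when $p$ or $q$ is the largest of the four primes, where a direct vanishing-sum argument may be needed in place of the generic reasoning.
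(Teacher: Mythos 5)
Your overall skeleton (reduce to $\abs{\La}\in\{1,p,q,pq\}$ via Shi's result, then exploit $\La_{pq}=\Z_{pq}$, Lemma~\ref{pqrsla}, Lemma~\ref{lemris} and Corollary~\ref{cor35}) matches the paper's, but the proof is not complete and one step is wrong as stated. The concrete error is in the branch where neither $\Phi_{pqr}$ nor $\Phi_{pqs}$ divides $m_{\La}$: you claim that $\Phi_{pq}\mid m_S$ together with $\abs{S}=pq$ forces $S_{pq}=\Z_{pq}$. Lemma~\ref{twoprimes} only gives that the (possibly multi-)set $S_{pq}$ is a nonnegative sum of $\Z_p$- and $\Z_q$-cosets with $\abs{S}=pk+q\ell$; for $\abs{S}=pq$ this means $q$ copies of $\Z_p$-cosets or $p$ copies of $\Z_q$-cosets, which may coincide or repeat, so $S_{pq}=\Z_{pq}$ does not follow. (This is precisely why the paper needs $\Phi_p\Phi_q\mid m_S$ in Proposition~\ref{phipqdiv}, first to force $S_{pq}=c\Z_{pq}$ in Lemma~\ref{lpq1} and then to kill $c>1$ in Lemma~\ref{lpq2}.) In fact this branch closes much more cheaply: primitivity of $\La$ together with $\La_{pq}=\Z_{pq}$ produces a pair of points of $\La$ differing in their $p$-, $q$- \emph{and} $r$-coordinates, which by \eqref{divclass} forces $\Phi_{pqr}\mid m_S$ or $\Phi_N\mid m_S$, contradicting the branch hypothesis via Lemma~\ref{pqrsla}. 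This is exactly the paper's opening move in the $\abs{\La}=pq$ case.

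The two remaining gaps are gaps of omission. First, the case $\Phi_{pqr}\mid m_{\La}$, which you yourself flag as the main obstacle, is where the actual work lies, and your proposed ``system of equations on the fibres of $g_r$'' is not the right tool; the paper finishes in two lines: by Lemma~\ref{lemris} any two points of $\La$ with distinct $p$- and $q$-coordinates have distinct $r$-coordinates, hence (as $\Phi_N\nmid m_S$ forbids differences of order $N$) the \emph{same} $s$-coordinate, and since $\La_{pq}=\Z_{pq}$ this relation propagates through a third point to every pair, so all of $\La$ lies in one $\Z_{pqr}$-coset, contradicting primitivity. Second, the case $\abs{\La}=p$ is only gestured at; the paper's argument there is genuinely needed: if $S$ is not a tile there are $x,y\in S$ with $p\mid x-y$, forcing one of $\Phi_{qr},\Phi_{qs},\Phi_{rs},\Phi_{qrs}$ to divide $m_{\La}$, and each option (via Lemma~\ref{twoprimes} or the 3D cube rule applied to $\La_{qrs}$) yields $\Phi_u\mid m_S$ or $\Phi_{up}\mid m_S$ for some $u\in\{q,r,s\}$ or $\Phi_p\mid m_S$, each of which contradicts $q,r,s\nmid\abs{S}$ or makes $S$ a tile of size $p$. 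Without these two pieces the proposal does not establish the proposition.
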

\begin{proof}
We may assume $|S|=|\La|=1 \mbox{ or } p \mbox{ or } pq$. The $|S|=1$ case is trivial.

\smallskip

If $|\La|=pq$, then $\La_{pq}=\Z_{pq}$ by \cite{Shi18}. Since $\Phi_N \nmid m_S$ we have that for every pair of elements $x,y$ of $\La$ if the $p$-coordinate and $q$-coordinate of $x$ and $y$ are different, then their $r$- or $s$-coordinate is the same.

If $\La_{pq}=\Z_{pq}$, then there is a pair of points having different $p$- and $q$- that have different $r$-coordinate as well. Otherwise, $\La$ is not primitive, and then $S$ is a tile by Summary \ref{summary}. Let $x_1$ and $y_1$ be such a pair in $\La$, so their $p$-, $q$- and $r$-coordinates are different. So the $s$-coordinate of $x_1$ and $y_1$ is the same since $\Phi_N \nmid m_S$. Thus $\Phi_{pqr} \mid m_S$, which implies $\Phi_{pqr} \mid m_{\La}$ by Lemma \ref{pqrsla}. Thus by Lemma \ref{lemris} if the $p$-
and $q$-coordinates of a pair of points of $\La$ are different, then so do their $r$-coordinates. Again, we obtain that the $s$-coordinate of these points is the same. Then it is easy to see from $\La_{pq}=\Z_{pq}$ that the $s$-coordinate of every point of $\La$ is the same. Thus $\La$ is not primitive, and then $S$ is a tile.

\smallskip

Finally, assume $|\La|=p$. Suppose by contradiction that $S$ is not a tile. Then there is a pair $x$ and $y$ in $S$ such that $p \mid x-y$. Then by our assumptions we have $\Phi_{qr} \mid m_{\La}$ or $\Phi_{qs} \mid m_{\La}$ or $\Phi_{rs} \mid m_{\La}$ or $\Phi_{qrs} \mid m_{\La}$. We may assume without loss of generality that $\Phi_{rs} \mid m_{\La}$ or $\Phi_{qrs} \mid m_{\La}$.

If $\Phi_{rs} \mid m_{\La}$, then $\La_{rs} $ is the sum of $\Z_r$-cosets and $\Z_s$-cosets by Lemma \ref{twoprimes}.
Since $r \nmid |\La|$ and $s \nmid |\La|$ both types appear in the sum. Thus we have $\Phi_{pq} \mid m_S$ or $\Phi_{q} \mid m_S$ or $\Phi_{p} \mid m_S$.

The last option would imply that $S$ is equidistributed $\bmod{p}$ and since $|S|=p$, it is a tile, which contradicts our assumption.
$\Phi_q \mid m_S$ is excluded since $q \nmid |S|$. Finally, one can see that $\Phi_{pq} \mid m_S$ and $|S|=p$ also implies that $S_{pq}$ is a $\Z_p$-coset by Lemma \ref{twoprimes}, and hence $S$ is a tile, a contradiction.

Assume $\Phi_{qrs} \mid m_{\La}$. Then the 3D cube rule holds for $\La_{qrs}$.
We claim that in this case there is a pair of points in $\La_{qrs}$ of Hamming distance $1$. This follows simply from the fact that if $z \in \La_{qrs}$ such that none of the point of Hamming distance $1$ from $z$ is in $\La_{qrs}$, then every point $z'$ with $d_H(z,z')=3$ is in $\La_{qrs}$ by the 3D cube rule.

We obtain that $\Phi_{up} \mid m_S$ or $\Phi_u \mid m_S$, where $u \in \{ q,r,s\}$. The latter is excluded by $u  \nmid |S|$. If $\Phi_{up} \mid m_S$ and $|S|=p$, then by Lemma \ref{twoprimes}, $S_{up}$ is a $\Z_p$-coset so $S$ is a tile, which is a contradiction.
\end{proof}

\begin{cor}\label{corris}
Let $(S,\La)$ be a spectral pair of $\Z_N$. Assume $\Phi_N \nmid m_S$ and $\Phi_N \nmid m_{\La}$. If $\Phi_{pqr}\mid m_S$ $($ resp. $\Phi_{pqs}\mid m_S)$, then every pair of points in $\La$ having different $p$- and $q$-coordinates must have different $r$-coordinate $($ resp. $s$-coordinate$)$ as well or $S$ is a tile.
\end{cor}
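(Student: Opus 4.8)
The plan is to deduce this directly from Lemma \ref{pqrsla} together with Lemma \ref{lemris}, which already treat the corresponding statement for $\La$. The key observation is that, under the standing hypotheses $\Phi_N\nmid m_S$ and $\Phi_N\nmid m_{\La}$, divisibility of $m_S$ by $\Phi_{pqr}$ (respectively $\Phi_{pqs}$) is \emph{equivalent} to the same divisibility for $m_{\La}$, so Corollary \ref{corris} is really just Lemma \ref{lemris} read off in terms of $m_S$ instead of $m_{\La}$.

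Concretely, I would first assume $\Phi_{pqr}\mid m_S$. Since $(S,\La)$ is a spectral pair with $\Phi_N\nmid m_S$ and $\Phi_N\nmid m_{\La}$, Lemma \ref{pqrsla} applies and gives $\Phi_{pqr}\mid m_{\La}$. Now the hypotheses of Lemma \ref{lemris} are met verbatim (namely $\Phi_N\nmid m_S$, $\Phi_N\nmid m_{\La}$, and $\Phi_{pqr}\mid m_{\La}$), so it yields that every pair of points of $\La$ with different $p$- and $q$-coordinates has different $r$-coordinate, or else $S$ is a tile; this is exactly the claimed dichotomy. For the parenthetical statement one repeats the argument with $s$ in place of $r$, invoking the second equivalence of Lemma \ref{pqrsla}, i.e. $\Phi_{pqs}\mid m_S\Longleftrightarrow\Phi_{pqs}\mid m_{\La}$, together with the ``resp.'' clause of Lemma \ref{lemris}.

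Since each step is a direct application of an already-established lemma, there is no genuine obstacle in this corollary; the only point to check is that the hypotheses match up, which they do precisely because the assumptions $\Phi_N\nmid m_S$ and $\Phi_N\nmid m_{\La}$ are common to Lemmata \ref{pqrsla} and \ref{lemris} and to the statement of Corollary \ref{corris} itself.
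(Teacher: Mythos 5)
Your proposal is correct and follows essentially the same route as the paper: apply Lemma \ref{pqrsla} to transfer the divisibility $\Phi_{pqr}\mid m_S$ (resp.\ $\Phi_{pqs}\mid m_S$) to $m_{\La}$, then invoke Lemma \ref{lemris}, whose conclusion already contains the ``or $S$ is a tile'' escape clause. The paper's proof additionally mentions the contingency that $\La$ is a tile (handled via Proposition \ref{proplatilestile}), but with the lemmas as stated this adds nothing to your argument.
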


\begin{proof}
Applying Lemma \ref{pqrsla}, either the statement follows directly from  Lemma \ref{lemris} or $\La$ is a tile. Then by Proposition \ref{proplatilestile}, $S$ is a tile.
\end{proof}

\begin{lem}\label{lems2}
If $\Phi_{pqr}\mid m_S$, then $\Phi_{pqs}\nmid m_S$, and also if $\Phi_{pqs}\mid m_S$, then $\Phi_{pqr}\nmid m_S$. The same holds for $\La$.
\end{lem}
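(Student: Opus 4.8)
The plan is to argue by contradiction, turning the two divisibilities into an impossible distance condition on $\La$. By the duality of spectral pairs (Lemma \ref{duality}), $(\La,S)$ is again a spectral pair satisfying the same standing hypotheses of Case~(II) ($\Phi_N\nmid m_S$, $\Phi_N\nmid m_{\La}$, $r,s\nmid|S|=|\La|$), so the assertion about $\La$ follows from the assertion about $S$ applied to $(\La,S)$; thus it is enough to show that $\Phi_{pqr}\mid m_S$ and $\Phi_{pqs}\mid m_S$ cannot hold simultaneously. As elsewhere in Case~(II), ``$S$ is a tile'' is an acceptable terminal conclusion, so what I really establish is that $\Phi_{pqr}\mid m_S$ together with $\Phi_{pqs}\mid m_S$ forces $S$ to be a tile.

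So assume $\Phi_{pqr}\mid m_S$ and $\Phi_{pqs}\mid m_S$ and suppose $S$ is not a tile. Since $\Phi_N\nmid m_S$, for every unit $u\in\Z_N^{\star}$ the number $\xi_N^{-u}$ is a primitive $N$-th root of unity, so $m_S(\xi_N^{-u})\neq0$, and \eqref{spectralitymask} yields $u\notin\La-\La$; hence no two points of $\La$ differ in all four coordinates, i.e.\ $\La$ has no pair at Hamming distance $4$. On the other hand, Corollary \ref{corris} applied to $\Phi_{pqr}\mid m_S$ (using that $S$ is not a tile) shows that any two points of $\La$ with distinct $p$- and $q$-coordinates have distinct $r$-coordinates, and applied to $\Phi_{pqs}\mid m_S$ it shows that such a pair also has distinct $s$-coordinates. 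Combining these, any two points of $\La$ with distinct $p$- and $q$-coordinates would lie at Hamming distance $4$, which is impossible; therefore any two points of $\La$ agree in their $p$-coordinate or in their $q$-coordinate.

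To finish, note the elementary fact that a subset $X$ of $\Z_p\oplus\Z_q$ in which any two elements agree in the first or in the second coordinate is contained in a single $\Z_p$-coset or a single $\Z_q$-coset: fixing $x=(a,b)\in X$, every element of $X$ lies in $(\{a\}\oplus\Z_q)\cup(\Z_p\oplus\{b\})$, and $X$ cannot contain both a point $(a,b')$ with $b'\neq b$ and a point $(a',b)$ with $a'\neq a$, since those disagree in both coordinates. Applying this with $X=\La_{pq}$, and recalling that we may assume $0\in\La$ (Summary \ref{summary}), we obtain that $\La$ lies in the proper subgroup $\Z_{prs}$ or $\Z_{qrs}$ of $\Z_N$, so $\La$ does not generate $\Z_N$; by Summary \ref{summary}, $S$ is a tile, contradicting our assumption. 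The only point requiring care is this interplay with the ``or $S$ is a tile'' alternative inside Corollary \ref{corris}: strictly speaking, one proves that $\Phi_{pqr},\Phi_{pqs}\mid m_S$ implies $S$ tiles, so from this point on one may assume that at most one of $\Phi_{pqr},\Phi_{pqs}$ divides $m_S$, and likewise for $\La$. Beyond that I do not expect any real obstacle, since everything reduces to Corollary \ref{corris} and \eqref{spectralitymask}.
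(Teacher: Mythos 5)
Your argument is correct and matches the paper's proof in its essential mechanism: applying Corollary \ref{corris} to both $\Phi_{pqr}\mid m_S$ and $\Phi_{pqs}\mid m_S$ forces a pair of points of $\La$ with distinct $p$- and $q$-coordinates to lie at Hamming distance $4$, contradicting $\Phi_N\nmid m_S$. The only cosmetic differences are that you re-derive the needed primitivity input (the existence of such a pair in $\La$) by hand instead of quoting Corollary \ref{cor35}, and you transfer the statement to $\La$ by duality rather than via Lemma \ref{pqrsla}; both are fine.
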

\begin{proof}
We take $x$ and $y$ obtained from Corollary \ref{cor35}, i.e., they have different $p$- and $q$-coordinates. By Corollary \ref{corris}, if $\Phi_{pqr}\mid m_S$ holds, then their $r$-coordinates are different, as well. Similarly, if $\Phi_{pqs}\mid m_S$ holds, then so their $s$-coordinates. If both $\Phi_{pqr}\Phi_{pqs}\mid m_S$ holds, then $d_H(x,y)=4$, which is a contradiction. The statement for $\La$ directly follows by applying Lemma \ref{pqrsla}.
\end{proof}

By Lemma \ref{lems2}, without loss of generality, we can assume that $\Phi_{pqs}\nmid m_S$, and thus $\Phi_{pqs}\nmid m_{\La}$ by Lemma \ref{pqrsla}.
\begin{lem}\label{lems3}
If $\Phi_{pqs}\nmid m_S$, then $S$ is not primitive.
\end{lem}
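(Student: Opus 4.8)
The plan is to argue by contradiction: assume $S$ is primitive and derive a contradiction, so that $S$ is in fact non-primitive (hence a tile, by Summary~\ref{summary}). By Summary~\ref{summary} take $0\in S$, $0\in\La$; by Corollary~\ref{rsndiv}, $r,s\nmid|S|=|\La|$, and after the preliminary reductions (and Theorem~\ref{T6}) the only case left is $|S|=|\La|=pq$, the cardinalities $|S|\in\{1,p,q\}$ being easy. The crucial input is read off from~\eqref{divclass}: since $\Phi_N\nmid m_\La$ and $\Phi_{pqs}\nmid m_\La$, no two points of $S$ differ in all four coordinates, nor in exactly the three coordinates $p,q,s$; hence \emph{any two points of $S$ whose $p$- and $q$-coordinates both differ share their $s$-coordinate}, and dually (from $\Phi_N\nmid m_S$, $\Phi_{pqs}\nmid m_S$) so do any two such points of $\La$.

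From this I would extract two reductions. (1) Neither $S$ nor $\La$ projects bijectively onto $\Z_{pq}$: for such a graph $\{(a,b,f(a,b),g(a,b)):(a,b)\in\Z_{pq}\}$ the observation, applied to the $s$-coordinate $g$ by fixing in turn the $p$- and the $q$-coordinate (using that $p\ne q$ are primes, so one of them is $\ge 3$), forces $g$ constant; then a primitive $pqs$-th root of unity is a root of the mask, since $\sum_{a\in\Z_p}\xi_p^a=\sum_{b\in\Z_q}\xi_q^b=0$, contradicting $\Phi_{pqs}\nmid m_S$ (resp.\ $m_\La$). Hence $\Phi_{rs}\mid m_S$ and $\Phi_{rs}\mid m_\La$; moreover $\Phi_p\Phi_q\nmid m_S,m_\La$ and $\Phi_{pq}\nmid m_S,m_\La$, since each such divisibility would force the forbidden bijective graph---via Proposition~\ref{phipqdiv}'s proof (Lemmas~\ref{lpq1},~\ref{lpq2}) when $\Phi_p\Phi_q$ divides, and via Lemma~\ref{twoprimes} together with $|S|=pq$ (using that $pk+q\ell=pq$ has no solution with $k,\ell>0$) when $\Phi_{pq}$ divides. (2) $S$ and $\La$ are not tiles: a tiling complement $T$ of $S$ has $|T|=rs$, so the tiling identity $m_S(x)m_T(x)\equiv 1+x+\dots+x^{N-1}\pmod{x^N-1}$ together with $p,q\nmid|T|$ would give $\Phi_p\Phi_q\mid m_S$, a contradiction (and likewise for $\La$). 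Consequently every ``$S$ is a tile'' / ``$\La$ is a tile'' alternative in Corollary~\ref{corris}, Lemma~\ref{lemris} and Proposition~\ref{proplatilestile} is unrealisable here, so those results apply outright, and by Lemma~\ref{lem1} (whose hypothesis holds, as $\mathbf{S-T}$ is known for every proper quotient of $\Z_N$) $\La$ too is primitive.

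It remains to run the combinatorial endgame. By Corollary~\ref{cor35}, since $\Phi_N,\Phi_{pqs},\Phi_{pq}\nmid m_\La$, we must have $\Phi_{pqr}\mid m_\La$, whence $\Phi_{pqr}\mid m_S$ by Lemma~\ref{pqrsla}; thus $S_{pqr}$ and $\La_{pqr}$ are genuine sets (as $\Phi_s\nmid m_S,m_\La$) satisfying the 3D cube rule. Now Corollary~\ref{corris} and Lemma~\ref{lemris} (tile-alternatives excluded) force every pair of points of $S$, or of $\La$, differing in both $p$- and $q$-coordinates to differ in exactly $p,q,r$; combining this with the observation, in $S$ (and in $\La$) ``differing in both $p$ and $q$'' coincides with ``differing in exactly $p,q,r$'', so the graph joining such pairs has constant $s$-coordinate on each component, and the 3D cube rule on $S_{pqr}$---together with~\eqref{modpdiv} and Lemma~\ref{incompletetriangle} read modulo $r$, which rule out further factors such as $\Phi_{pr},\Phi_{qr}$---should force that graph connected, hence $S$ into a single $\Z_{N/s}$-coset, contradicting primitivity. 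The genuinely delicate point, and the step I expect to be the main obstacle, is exactly this connectivity analysis, above all when a component is a single point or when $r=2$ or $s=2$: there the $p,q,s$-combinatorics alone is too weak, and one must re-run the cube-rule argument in the $r$-direction, using Corollary~\ref{cor35} for a prime pair involving $r$ and the 2D/3D cube rules, just as in the proofs of Lemmas~\ref{lem:phimidS} and~\ref{lpq2}. In every branch one concludes that $S$ sits inside some $\Z_{N/p_i}$-coset, contradicting the primitivity of $S$; therefore $S$ is not primitive.
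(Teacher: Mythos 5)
There are two genuine gaps. The first is the reduction, at the very start, to $|S|=|\La|=pq$: you justify it by ``the preliminary reductions (and Theorem~\ref{T6})'', but no such reduction is available here. The divisor condition $|S|\mid N$ (which, together with $r,s\nmid|S|$, would give $|S|\in\{1,p,q,pq\}$) holds only for tiles, and whether $S$ tiles is exactly what is at stake; indeed you later argue that $S$ and $\La$ are \emph{not} tiles, which makes the reduction circular. Without $|S|=pq$ the whole first block collapses: ``projects bijectively onto $\Z_{pq}$'' is no longer the relevant dichotomy, a tiling complement need not have cardinality $rs$, and the arithmetic $pk+q\ell=pq$ used to exclude $\Phi_{pq}\mid m_S,m_\La$ is unavailable (for instance $|S|=p+q$ is perfectly consistent with Lemma~\ref{twoprimes} at this stage). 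The paper's proof makes no cardinality assumption: it takes the pair $x,y$ supplied by Corollary~\ref{cor35}, deduces from $\Phi_N,\Phi_{pqs}\nmid m_\La$ that $S_{pqs}$ is contained in $(x+\Z_p)\cup(y+\Z_q)\cup(z+\Z_s)$, and splits into three cases according to how the points with a different $s$-coordinate are distributed.

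The second gap is that the combinatorial endgame --- which you yourself flag as ``the main obstacle'' --- is precisely where the content of the lemma lies, and it is not carried out. Moreover your route heads in the opposite direction from the one that closes: you aim to establish $\Phi_{pqr}\mid m_\La$ (hence $\Phi_{pqr}\mid m_S$) and then derive a contradiction from connectivity of the ``differ in both $p$- and $q$-coordinates'' graph; but without $\La_{pq}=\Z_{pq}$ (which in Proposition~\ref{proplatilestile} comes from $\La$ being a tile) there is no reason for that graph to be connected, and the disconnected case is exactly what you cannot handle. The paper instead \emph{rules out} $\Phi_{pqr}\mid m_S$ in the critical Case~3, by an explicit 3D cube-rule computation on $S_{pqr}$ that produces a 2D cube $\{x',y',z_5,z_6\}$ in $S_{pq}$ incompatible with $S_{pq}\subseteq(x+\Z_p)\cup(y+\Z_q)$, and only then invokes Corollary~\ref{cor35} to force $\Phi_{pq}\mid m_S$ and reach the final contradiction via $\Phi_p\mid m_\La$. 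As it stands, the proposal is a plausible but unexecuted plan whose two load-bearing steps (the cardinality reduction and the connectivity argument) both fail.
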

\begin{proof}
Let $x,y\in S_{pqs}$ be the projection of the points getting from Corollary \ref{cor35}, i.e., their $p$- and $q$-coordinates are different. Then their $s$-coordinate is the same, otherwise $\Phi_{pqs}\mid m_{\La}$. Now we take those points that have different $s$-coordinate than $x$ (and $y$). These points must  have the same $p$-coordinate as $x$ has and the same $q$-coordinate as $y$ has or vica-versa. Thus, there are two $\Z_s$-cosets in $S_{pqs}$ that contain all of the points having different $s$-coordinate than $x$ has.
We can distinguish three cases:

{\bf Case 1.} Every point have the same $s$-coordinates as $x$ has.
Then, clearly, $S$ is not primitive.

{\bf Case 2.} There are points on both $\Z_s$-cosets.
Then taking one point from each coset, they differ in their $p$- and $q$- coordinates and since $\Phi_{pqs}\nmid m_{\La}$ and $\Phi_{N}\nmid m_{\La}$, their $s$-coordinates must be the same. Let  $z$ and $w$ denote this pair of points. By the same reasoning it follows that every point having different $s$-coordinate than $z$ (and $w$) are in two $\Z_s$-cosets containing $x$ and $y$, respectively.

Observe that if $a \in S$ and $b \in S$  have different $p$- and $q$-coordinates, then the $\Z_s$-cosets containing them do not contain any other element of $S$.

Therefore it immediately follows that $S_{pqs}=\{x,y,z,w\}$, thus $|S|=4$ and by Summary~\ref{summary}, $|S|$ is not spectral.

{\bf Case 3.} One $\Z_s$-coset contains all points of $S$ having different $s$-coordinate than $x$.

Now our strategy is that under the conditions $\phi_N\nmid m_S$ and $\phi_{pqs} \nmid m_S$ and the assumption of Case 3 we prove that $\phi_{pqr} \nmid m_S$ also holds. Thus,  it is enough to exclude that $\phi_{pq} \mid m_S$, by Corollary \ref{cor35}.

We denote one of the points of $S$ from this $\Z_s$-coset by $z$. Without loss of generality we can assume that $z$ has the same $q$-coordinate as $x$ and $p$-coordinate as $y$.

It is easy to see that if $\Phi_{pqs}\nmid m_{\La}$ and $x,y,z \in S_{pqs}$, then every point of $S_{pqs}$ are in $(x+\Z_p) \cup ( y+\Z_q )\cup (z+\Z_s)$.
Hence every point having the same $q$-coordinate as $x$ and different from $y$ has the same $s$-coordinate as $x$, and the every point having the same $p$-coordinate as $y$ and different from $x$ has the same $s$-coordinate as $y$.

Since $S_{pqs}$ is a set, every element in $S_{pqs}\cap (x+\Z_p)$ has multiplicity 1. As $y$ and $z$ have the same $p$-coordinates, it implies that $S$ is not equidistributed by $p$, and hence $\Phi_p \nmid m_S$. Similar argument shows that $\Phi_q\nmid m_S$.
Furthermore, $S_{pq}$ is in the projection of $(x+\Z_p) \cup ( y+\Z_q )$, where their intersection may have higher multiplicity.

As $\Phi_{pqs}\nmid m_{\La}$ holds, by Lemma \ref{pqrsla}, the same argument shows that $\La_{pqs}\subset (u+\Z_p) \cup ( v+\Z_q )\cup (w+\Z_s)$ for some $u,v,w\in\La_{pqs}$ (otherwise $\La$ is primitive and by Summary \ref{summary} we are done). In particular, $\Phi_p\nmid m_{\La}$ and $\Phi_q\nmid m_{\La}$ hold, as well.

Now we show that $\Phi_{pqr}\nmid m_S$.
By contradiction we assume that $\Phi_{pqr}\mid m_S$.
In this case the 3D cube rule holds. Now we take the preimages of $x,y$ and project it to $S_{pqr}$ which is also a set by $s \nmid |S|$.
Denote these projections by $x',y'\in S_{pqr}$. As their $p$-, $q$-coordinates are different, by Corollary \ref{corris}, their $r$-coordinates are also different. Let us write $x'=(a_1,b_1,c_1)$ and $y'=(a_2,b_2,c_2)$, where $a_i \in \Z_p$, $b_i \in \Z_q$ and $c_i \in \Z_r$ are pairwise different for $i=1,2$. It is clear that $p\ge 3$ or $q\ge 3$.

Without loss of generality, we can assume that $p\ge 3$. Thus there exists an $a_3\in \Z_p\setminus \{a_1,a_2\}$. Clearly, $z_1=(a_3, b_2,c_1)$ and $z_2=(a_3,b_1, c_2)$ are not in $S_{pqr}$ since $x'$ and $z_1$ (resp., $y'$ and $z_2$) differ in their $p$- and $q$-coordinates and coincide in their $r$-coordinate, which is impossible by Corollary \ref{corris}.
On the other hand, $z_3=(a_3, b_1,c_1)$ cannot be in $S_{pqr}$, since $z_3$ has the same $q$-coordinate as $x'$ and different from $y'$, which implies that the preimage of $z_3$ in $\Z_N$ and $x'$ have the same $s$-coordinate.
Thus $x'$ and $z_3$ are in the same $\Z_p$-coset, which implies that $\Phi_p\mid m_{\La}$ which was excluded. Similar argument shows that $z_4=(a_3,b_2,c_2)\not \in S_{pqr}$ since $y' \in S_{pqr}$, by changing the role of $x'$ and $y'$.
\setlength{\unitlength}{0.9cm}
\thicklines
\begin{center}
\begin{picture}(8,8)
\put(2,2.2){\line(1,0){3}}
\put(1.65,1.7){$x'$}
\put(2,2.2){\line(0,1){3}}
\put(2,2.2){\circle*{0.2}}
\put(5,2.2){\circle*{0.2}}
\put(5.2,2.11){$z_5$}
\put(5,2.2){\line(0,1){3}}
\put(2,5.2){\circle{0.2}}
\put(5,5.2){\circle{0.2}}
\put(2,5.2){\line(1,0){3}}

\put(3,3.2){\line(1,0){3}}
\put(3,3.2){\line(0,1){3}}
\put(3,3.2){\circle{0.2}}
\put(6,3.2){\circle{0.2}}
\put(6,3.2){\line(0,1){3}}
\put(3,6.2){\circle*{0.2}}
\put(2.50,6.2){$z_6$}
\put(6,6.2){\circle*{0.2}}
\put(6.2,6.1){$y'$}
\put(3,6.2){\line(1,0){3}}

\put(4,4.2){\line(1,0){3}}
\put(3.45,4.18){$z_3$}
\put(4,4.2){\line(0,1){3}}
\put(4,4.2){\circle{0.2}}
\put(7,4.2){\circle{0.2}}
\put(7,4.2){\line(0,1){3}}
\put(7.2,4.1){$z_1$}
\put(4,7.2){\circle{0.2}}
\put(7,7.2){\circle{0.2}}
\put(7.25,7.15){$z_4$}
\put(4,7.2){\line(1,0){3}}
\put(3.45,7.19){$z_2$}

\put(2,2.2){\line(1,1){2}}
\put(5,2.2){\line(1,1){2}}
\put(2,5.2){\line(1,1){2}}
\put(5,5.2){\line(1,1){2}}
\end{picture}
\end{center}
As $(a_2,b_2,c_1)$ and $(a_1,b_1,c_2)$ are not in $S_{pqr}$ by Corollary \ref{corris}, applying the 3D cube rule for $C(x',z_4)$ and $C(y',z_3)$ we get that $z_5=(a_1, b_2,c_1)$ and $z_6=(a_2,b_1,c_2)$ are in $S_{pqr}$. Then the projection of these 4 points $\{x',y',z_5,z_6\}$ in $S_{pq}$ is a 2D cube, although it is known that $S_{pq}$ is in  $(x+\Z_p) \cup ( y+\Z_q )$. This contradiction shows that $\Phi_{pqr}\nmid m_S$.

By Corollary \ref{cor35}, if $\Phi_N\nmid m_S$ and $\Phi_{pqr}\nmid m_S$ and  $\Phi_{pqs}\nmid m_S$, then $\Phi_{pq}\mid m_S$.
In this case, the projection $S_{pq}$ is the sum of $\Z_p$- and $\Z_q$-cosets. As $S_{pqs}\subset(x+\Z_p) \cup ( y+\Z_q )\cup (z+\Z_s)$, it follows that $S_{pq}$ is the sum of a $\Z_p$- and a $\Z_q$-coset.
Let $x'', y''$, and $z''$ denote the projection of $x, y$ and $z$ on $\Z_{pq}$, respectively. Clearly, $z''=(x''+\Z_p)\cap (y''+\Z_q)$.
Since all elements of $ (x''+\Z_p)\setminus \{z''\}$ have different $p$- and $q$-coordinates from any elements of $(y''+\Z_q)\setminus \{z''\}$ and $\Phi_{pqr}, \Phi_{pqs}\nmid m_S$, the preimages of the elements $(x''+\Z_p)\setminus \{z''\}$ has the same $r$- and $s$-coordinate. Since the elements of $(x''+\Z_p)$ have also the same $q$-coordinate, it follows that their preimages differ only in their $p$-coordinates. As $p\ge 3$ (by our assumption), we have $|(x''+\Z_p)\setminus \{z''\}|\ge 2$, and hence $\Phi_p\mid m_{\La}$, which was excluded.
This contradiction shows the statement.
\end{proof}

\section*{Acknowledgement}
G. Kiss was supported by a Premium Postdoctoral Fellowship of the Hungarian Academy of Science, and by NKFIH (Hungarian National Research, Development
and Innovation Office) grant K-124749.

\smallskip

\noindent
G. Somlai was supported by the J\'anos Bolyai Research Fellowship of the Hungarian Academy of Sciences and the New National Excellence Program under the grant number \'UNKP-20-5-ELTE-231,  received funding from the European Research Council (ERC) under the European Union’s Horizon 2020 research and innovation programme (grant agreement No. 741420),

\smallskip

\noindent
M. Vizer was supported by the National Research, Development and Innovation Office -- NKFIH under the grants KH 130371, SNN 129364, the J\'anos Bolyai Research Fellowship of the Hungarian Academy of Sciences and the New National Excellence Program under the grant number \'UNKP-20-5-BME-45.

\appendix
\section{Appendix}

Let $N=pqr$, where $p,q,r$ are different primes and $(A,B)$ a spectral pair.
By Lemma \ref{manyprimes}, we can assume that at least two primes does not divide the cardinality of $S$.
Without loss of generality we may assume $q \nmid |S|$ and $r \nmid |S|$ and $q>r\ge 2$.
\begin{lem}\label{lemnew0723}
Let $(S, \La)$ be a spectral pair. If $\Phi_N \mid m_S$ and $gcd(|S|,N) \mid p$, then $S$ is the union of $\Z_p$-cosets.
\end{lem}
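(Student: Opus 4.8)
The plan is to extract two structural facts from the hypotheses and then run a short propagation argument via the cube rule. Write $\Z_N=\Z_p\oplus\Z_q\oplus\Z_r$ and denote an element by its coordinate triple $(a,b,c)$. First, $\Phi_N\mid m_S$ together with the cube rule (Theorem~\ref{cuberule}) gives that $S$ satisfies the 3D cube rule. Second, since $\gcd(|S|,N)\mid p$ and $N=pqr$ is square-free we have $q,r\nmid|S|=|\La|$, hence $m_\La(\xi_q)\neq 0$ and $m_\La(\xi_r)\neq 0$; applying \eqref{divclass} to the spectral pair $(\La,S)$ (recall by Lemma~\ref{duality} that $S$ is a spectrum of $\La$) shows that $S-S$ meets neither the order-$q$ nor the order-$r$ elements of $\Z_N$, because the divisor classes $\tfrac{N}{q}\Z_N^\star$ and $\tfrac{N}{r}\Z_N^\star$ are, respectively, exactly those element sets and they do not occur in the union on the right of \eqref{divclass}. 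Equivalently, the two natural projections $S\to\Z_N/\Z_q\cong\Z_{pr}$ and $S\to\Z_N/\Z_r\cong\Z_{pq}$ are injective: two distinct points of $S$ can agree neither in both their $\Z_p$- and $\Z_r$-coordinates nor in both their $\Z_p$- and $\Z_q$-coordinates.

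Next I would argue by contradiction. Suppose $S$ is not a union of $\Z_p$-cosets, so there are $x=(a,b,c)\in S$ and $a'\neq a$ with $x_p:=(a',b,c)\notin S$. Fix any $b''\in\Z_q\setminus\{b\}$ and $c''\in\Z_r\setminus\{c\}$, and apply the 3D cube rule to the cube $C=\{a,a'\}\oplus\{b,b''\}\oplus\{c,c''\}$ with distinguished vertex $x$. The three vertices of $C$ at Hamming distance $1$ from $x$ are $x_p$, $(a,b'',c)$ and $(a,b,c'')$: the first lies outside $S$ by assumption, while $(a,b'',c)$ shares its $\Z_p$- and $\Z_r$-coordinates with $x\in S$ and $(a,b,c'')$ shares its $\Z_p$- and $\Z_q$-coordinates with $x\in S$, so both are excluded from $S$ by the injectivity just established. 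Feeding $S(x)=1$ and these three zeros into $\sum_{v\in C}(-1)^{d_H(x,v)}S(v)=0$ and using $0\le S(v)\le 1$ forces the antipodal vertex $(a',b'',c'')$ to lie in $S$ (and incidentally the other three distance-$2$ vertices to lie outside $S$, which we will not need).

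To finish, I would exploit the freedom in choosing $b''$. Since $q>r\ge 2$ we have $q\ge 3$, so we may pick distinct $b_1,b_2\in\Z_q\setminus\{b\}$ together with some $c_1\in\Z_r\setminus\{c\}$; the previous paragraph then yields $(a',b_1,c_1)\in S$ and $(a',b_2,c_1)\in S$. These are two distinct elements of $S$ agreeing in both their $\Z_p$- and $\Z_r$-coordinates, contradicting the injectivity of $S\to\Z_N/\Z_q$. Hence no such $x$ and $a'$ exist, i.e.\ $x+\Z_p\subseteq S$ for every $x\in S$, so $S$ is a union of $\Z_p$-cosets.

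The computation is short, so the only points that really need care are bookkeeping ones: the "injectivity mod $\Z_q$ and mod $\Z_r$" conclusions must be drawn on the $S$-side of the spectral pair (via duality, from $q,r\nmid|\La|$) and translated cleanly into the coordinate language of $\Z_p\oplus\Z_q\oplus\Z_r$; and one must check that every object invoked actually exists — the cube $C$ (needs $p,q,r\ge 2$) and, crucially, the two distinct choices of $b''$ in the last step, which is exactly where the standing hypothesis $q>r\ge 2$, hence $q\ge 3$, is used.
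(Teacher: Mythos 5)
Your proof is correct and follows essentially the same route as the paper: both establish that $q,r\nmid|S|$ forces $S\cap\bigl((x+\Z_q)\cup(x+\Z_r)\bigr)=\{x\}$, then use the 3D cube rule on a cube through $x$ and the missing point $x_p$ to push the antipodal distance-$3$ vertices into $S$, and finally use $q\ge3$ to produce two such vertices differing only in their $q$-coordinate, contradicting the first step. Your write-up just makes explicit the duality and \eqref{divclass} bookkeeping that the paper leaves implicit.
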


\begin{proof} It follows from $q \nmid |S|$ and $r \nmid |S|$ that  $S\cap((x+\Z_q)\cup (x+\Z_r))=\{ x\}$ for every $x \in  \Z_N$.
We proceed indirectly. Assume there is $y \in (x+\Z_p)\setminus S$. Let $H$ denote the set of points $z$ of  Hamming distance $3$ from $x$ such that $y \in C(x,z)$. Clearly, $q$ or $r$ is at least $3$ so there are point in $H$, which differ only in their $q$- or $r$-coordinate.

Note that by the 3D cube-rule $H \subseteq S$ which implies $q \mid |S|$ and $r \mid |S|$, a contradiction.
\end{proof}

Thus if $\Phi_N\mid m_S$, then by Lemma \ref{lemnew0723} and Summary \ref{summary}, it follows that $S$ is a tile. Similar argument shows the same if $\Phi_N\mid m_{\La}$. Therefore we get the following statement.

\begin{lem}\label{lem3p1}
Let $(S, \La)$ be a spectral pair of $Z_N$. If either $\Phi_N\mid m_S$ or $\Phi_N\mid m_{\La}$, then $S$ is a tile.
\end{lem}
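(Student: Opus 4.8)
The plan is to reduce both alternatives to Lemma \ref{lemnew0723} combined with the ``union of $\Z_{p_i}$-cosets'' clause of Summary \ref{summary}. Recall the standing reduction active throughout this appendix: by Lemma \ref{manyprimes} we may assume that two of the three primes, say $q$ and $r$, do not divide $\abs{S}$; since $\abs{S}=\abs{\La}$, they also fail to divide $\abs{\La}$. In particular both $\gcd(\abs{S},N)$ and $\gcd(\abs{\La},N)$ divide $p$, which is precisely the extra hypothesis needed to invoke Lemma \ref{lemnew0723}.

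First I would dispose of the case $\Phi_N\mid m_S$. Here the hypotheses of Lemma \ref{lemnew0723} are exactly met, so that lemma gives that $S$ is a union of $\Z_p$-cosets, and the corresponding bullet of Summary \ref{summary} immediately yields that $S$ tiles $\Z_N$.

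For the case $\Phi_N\mid m_{\La}$ I would pass to the dual spectral pair via Lemma \ref{duality}: $(\La,S)$ is again a spectral pair, and it satisfies the same standing reduction because $q,r\nmid\abs{\La}=\abs{S}$. Applying Lemma \ref{lemnew0723} with $\La$ in the role of $S$ shows that $\La$ is a union of $\Z_p$-cosets, and once more Summary \ref{summary} --- which permits either member of a spectral pair to be a union of $\Z_{p_i}$-cosets --- gives that $S$ tiles $\Z_N$.

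I do not expect any genuine obstacle at this stage: the real content has already been packed into Lemma \ref{lemnew0723}, whose proof rests on the 3D cube rule (Theorem \ref{cuberule}) applied on the $\Z_N$-cosets together with the divisibility constraints $q,r\nmid\abs{S}$. The only point deserving a line of care is that the appendix's standing hypothesis transfers verbatim from $S$ to $\La$, which it does because $\abs{\La}=\abs{S}$; this is what makes the $S$-case and the $\La$-case perfectly symmetric and lets the same lemma finish both.
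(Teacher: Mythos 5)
Your proposal is correct and follows exactly the paper's own (very terse) argument: the standing reduction $q,r\nmid\abs{S}=\abs{\La}$ gives $\gcd(\abs{S},N)\mid p$, Lemma \ref{lemnew0723} (via the 3D cube rule) then shows $S$, respectively $\La$, is a union of $\Z_p$-cosets, and Summary \ref{summary} finishes. Your explicit appeal to Lemma \ref{duality} in the second case is a small point of care the paper leaves implicit behind ``similar argument,'' but it is the same proof.
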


Now suppose that $(S,\La)$ is a spectral pair such that $\Phi_N\nmid m_S$ and $\Phi_N\nmid m_{\La}$ and $q,r\nmid |S|$.
If $\Phi_{pq}\mid m_S$, then $S_{pq}$ satisfies the 2D cube rule, thus by Lemma \ref{twoprimes}, $S_{pq}$ is the union of $\Z_p$-cosets and $\Z_q$-cosets. Now we show that it is only the sum of $\Z_p$-cosets. Indeed, if $S_{pq}$ is the union of $Z_q$-cosets, then $q\mid |S|$, which is a contradiction. If two elements of $S$ having the same projection in $S_{pq}$,  $\Phi_r\mid m_{\La}$ and hence $r\mid|\La|=|S|$. Hence there is no pair of points in $S$ having the same projection. This implies $S_{pq}$ can only be the union of $\Z_p$-cosets that are not coincide, so it is the sum of $\Z_p$-cosets.

If $S_{pq}$ is a $\Z_p$-coset, then clearly $S_{pq}$ is a tile of $\Z_{pq}$ and hence $S$ tiles $\Z_N$. Now we assume that $|S|>p$. Since $\Phi_{pqr}\nmid m_{\La}$, there are no elements of $S$ of Hamming-distance 3.  Thus, if either $|S|\ge 3p$ or $p>2$  and $|S|\ge 2p$, then every element of $S$ have the same $r$-coordinate. Indeed, every element of $S$ can be reached by a path in $S_{pq}$ of length at most 3, where each consecutive elements are of Hamming-distance 2, hence their $r$-coordinate is the same. Thus $S$ is not primitive and by Summary \ref{summary} we are done. If $p=2$ and $|S|=2p=4<5$, then by Summary \ref{summary}, $S$ is not a spectral. Thus, as ${\bf T-S}(\Z_N)$ holds by \cite{Shi19}, if $N$ is square-free,  we proved the following, which was first proved in \cite{Shi19}.
\begin{Thm}\label{t3p}
Fuglede's conjecture holds on $\Z_{pqr}$, where $p,q,r$ are different primes.
\end{Thm}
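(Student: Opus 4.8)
The plan is to prove the two directions separately. The \emph{Tile $\Rightarrow$ Spectral} direction is already known for $\Z_{pqr}$ (by an argument of \L aba and Meyerowitz, and independently by \cite{Shi18}; see also Theorem \ref{discreteFuglederesults}), so I will concentrate on \emph{Spectral $\Rightarrow$ Tile}. Let $(S,\La)$ be a spectral pair in $\Z_N$ with $N=pqr$. The first move is to invoke the reductions gathered in Summary \ref{summary}: we may assume that $S$ and $\La$ are both primitive, that $|S|=|\La|\ge 6$ (otherwise $S$ tiles by Theorem \ref{T6}), and, by Lemma \ref{manyprimes}, that at most one prime divides $|S|$; relabel so that $q\nmid|S|$ and $r\nmid|S|$ (say $q>r\ge 2$). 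Then $\Phi_q,\Phi_r\nmid m_S,m_\La$, since $q\nmid|S|=|\La|$ gives $m_S(\xi_q)\ne 0$ and $m_\La(\xi_q)\ne 0$, and likewise for $r$. The argument then splits according to whether $\Phi_N$ divides one of $m_S,m_\La$.

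Suppose first $\Phi_N\mid m_S$; the case $\Phi_N\mid m_\La$ is symmetric, since the same reasoning then shows $\La$ is a union of $\Z_p$-cosets, whence $S$ tiles by Lemma \ref{lazpcos}. The key point is that $\Phi_N\mid m_S$ makes the 3D cube rule hold for $S$ itself, while $q,r\nmid|S|$ forces $S\cap\bigl((x+\Z_q)\cup(x+\Z_r)\bigr)=\{x\}$ for every $x\in S$. I would then show that $S$ is a union of $\Z_p$-cosets: if some $x\in S$ had $y\in(x+\Z_p)\setminus S$, then in every 3D cube through $x$ and $y$ there would be no element of $S$ at Hamming distance $1$ from $x$, so the cube rule would force into $S$ every vertex of such a cube at Hamming distance $3$ from $x$; since $\max(q,r)\ge 3$, varying the cube produces two elements of $S$ lying in a common $\Z_q$- or $\Z_r$-coset, a contradiction. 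Hence $S$ is a union of $\Z_p$-cosets and therefore a tile by Summary \ref{summary}.

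The substantive case is $\Phi_N\nmid m_S$ and $\Phi_N\nmid m_\La$. Here I would first read off from $\La-\La\subseteq\{0\}\cup Z(S)$ which cyclotomic factors can occur: the only $\Phi_e$ with $e\mid N$ still available for $m_S$ are $e\in\{p,pq,pr,qr\}$. If neither $\Phi_{pq}$ nor $\Phi_{pr}$ divided $m_S$, then $Z(S)$ would lie in the union of the order-$p$ and order-$qr$ subgroups of $\Z_N$, so any two points of $\La$ differing in their $p$-coordinate would agree in their $q$- and $r$-coordinates, and any two differing in the $q$- or the $r$-coordinate would agree in the $p$-coordinate; a short case check then forces $\La$ into a coset of a maximal subgroup, contradicting primitivity of $\La$. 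Hence, after swapping $q$ and $r$ if necessary, $\Phi_{pq}\mid m_S$. Then $S_{pq}$ obeys the 2D cube rule, so by Lemma \ref{twoprimes} it is a union of $\Z_p$- and $\Z_q$-cosets; a $\Z_q$-coset cannot occur because $q\nmid|S|$, and the projection $S\to S_{pq}$ is injective, since a fibre of size at least $2$ would place a nonzero element of the $\Z_r$-subgroup into $S-S\subseteq Z(\La)$, forcing $r\mid|S|$. Thus $S_{pq}$ is a union of pairwise distinct $\Z_p$-cosets, $S$ embeds into $\Z_{pq}\oplus\Z_r$, and $p\mid|S|$. If $S_{pq}$ is a single $\Z_p$-coset, then $|S|=p$ and $S$ is a complete set of coset representatives of $\Z_{qr}$ in $\Z_N$, hence a tile. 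Otherwise $|S|\ge 2p$; since $\Phi_N\nmid m_\La$ there is no pair of points of $S$ at Hamming distance $3$, so two elements of $S$ whose $\Z_{pq}$-projections differ in both coordinates must have the same $r$-coordinate. Walking through $S_{pq}$ by such steps connects all of $S$ whenever $|S|\ge 3p$, or $|S|\ge 2p$ and $p\ge 3$, forcing every element of $S$ to share one $r$-coordinate; then $S$ lies in a $\Z_{pq}$-coset, contradicting primitivity. The only remaining possibility is $p=2$, $|S|=2p=4\le 5$, and then $S$ tiles by Theorem \ref{T6}.

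I expect the real work, and the main obstacle, to be this last case: extracting the precise structure of $S_{pq}$ from $\Phi_{pq}\mid m_S$ (ruling out $\Z_q$-cosets and repeated fibres) and then running the Hamming-distance-$2$ connectivity argument while correctly treating the small-cardinality exceptions. By comparison, the case $\Phi_N\mid m_S$, the divisor-class bookkeeping, and the initial reductions should be routine given the lemmata already at hand.
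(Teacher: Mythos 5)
Your proposal follows essentially the same route as the paper's Appendix A proof: reduce via Lemma \ref{manyprimes} and Summary \ref{summary} to $q,r\nmid|S|$, dispose of the case $\Phi_N\mid m_S$ (or $m_\La$) with the 3D cube rule to conclude that the set is a union of $\Z_p$-cosets (the paper's Lemma \ref{lemnew0723}), and otherwise use $\Phi_{pq}\mid m_S$ together with Lemma \ref{twoprimes} and the Hamming-distance-$2$ connectivity argument. Your explicit justification that $\Phi_{pq}$ or $\Phi_{pr}$ must divide $m_S$ (from primitivity of $\La$ and the divisor-class containment \eqref{divclass}), and your appeal to Theorem \ref{T6} in the terminal case $p=2$, $|S|=4$, correctly fill in two steps that the paper's appendix leaves implicit or states garbled.
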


\section{Appendix}
Now we state and prove the generalisation of Lemma \ref{lemprop1} as follows. We show it when $m=\frac{N}{p_ip_j}$, one can plainly modify it for other cases along the same line.
\begin{prop}\label{pprop}
Suppose $N=p_1p_2 \ldots p_k$ is a square-free integer, $m \mid N$ and let $B \subseteq \Z_N$. Suppose also that for every $l$ with $m \mid l \mid N$ we have $\Phi_l \mid m_B$. Then for every $\Z_m$-coset $\Z_m+a$ we have $\Phi_m \mid m_{(B\cap (\Z_m+a))-a}$.
\end{prop}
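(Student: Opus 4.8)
The plan is to prove the statement directly and uniformly in $m$, bypassing any iteration of Lemma \ref{lemprop1}. The starting point is the coprime factorisation $\gcd(m,N/m)=1$, which holds because $N$ is square-free; hence by CRT $\Z_N\cong\Z_m\times\Z_{N/m}$, and, fixing $\xi_m=e^{2\pi i/m}$ and $\xi_{N/m}=e^{2\pi i/(N/m)}$, every $\xi_N^b$ factors as $\xi_m^{b}\,\xi_{N/m}^{b}$, where the first factor depends only on $b\bmod m$ and the second only on $b\bmod N/m$.

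First I would establish the key evaluation identity: for every $k\in\{0,1,\dots,N/m-1\}$ one has $m_B(\xi_m\xi_{N/m}^k)=0$. Indeed $\xi_{N/m}^k$ is a primitive root of unity of order $(N/m)/\gcd(k,N/m)$, which is coprime to $m$, so $\xi_m\xi_{N/m}^k$ is a \emph{primitive} root of order $l_k:=N/\gcd(k,N/m)$; here $m\mid l_k\mid N$, so the hypothesis gives $\Phi_{l_k}\mid m_B$, and since $m_B\in\Z[x]$ is then divisible by the minimal polynomial $\Phi_{l_k}$ of $\xi_m\xi_{N/m}^k$ over $\Q$, it vanishes there.

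Next I would expand each identity: grouping $m_B(\xi_m\xi_{N/m}^k)=\sum_{b\in B}\xi_m^{b}\xi_{N/m}^{kb}$ by the residue $c:=b\bmod N/m$ yields
\[0=\sum_{c\in\Z_{N/m}}t_c\,\xi_{N/m}^{kc},\qquad\text{where }t_c:=\sum_{\substack{b\in B\\ b\equiv c\,(N/m)}}\xi_m^{b}\in\Q(\xi_m).\]
Letting $k$ range over all of $\Z_{N/m}$ produces a full linear system whose matrix $(\xi_{N/m}^{kc})_{k,c}$ is invertible; concretely, multiplying the $k$-th equation by $\xi_{N/m}^{-kc_0}$ and summing over $k$ (orthogonality of the characters of $\Z_{N/m}$) gives $(N/m)\,t_{c_0}=0$, hence $t_c=0$ for every $c\in\Z_{N/m}$.

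Finally I would translate this back: for a coset $\Z_m+a$ write $\alpha=a\bmod m$ and $\gamma=a\bmod N/m$; then $B\cap(\Z_m+a)=\{b\in B:b\equiv\gamma\,(N/m)\}$, and the mask polynomial of $(B\cap(\Z_m+a))-a$, evaluated at $\xi_m$, equals $\xi_m^{-\alpha}t_\gamma=0$. As this polynomial lies in $\Z[x]$, it is divisible by $\Phi_m$, which is the claim. The only step demanding care is the first one — verifying that $\xi_m\xi_{N/m}^k$ is genuinely primitive of order $l_k$ and that $l_k$ is a multiple of $m$ dividing $N$, so that the hypothesis $\Phi_{l_k}\mid m_B$ is available; everything afterwards is bookkeeping. (Alternatively, one can argue by induction on $\omega(N/m)$: fix a prime $p\mid N/m$, split $B$ according to its residue mod $p$, project each part to $\Z_{N/p}$, and apply Lemma \ref{lemprop1} to the pair $(\Phi_{l'},\Phi_{pl'})$ for every $l'$ with $m\mid l'\mid N/p$ to descend to $\Z_{N/p}$; but the direct argument above is shorter and treats all $m$ at once, which in particular covers the case $m=\tfrac{N}{p_ip_j}$.)
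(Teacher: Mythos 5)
Your argument is correct, and it is a genuinely cleaner packaging than the paper's. The paper only writes out the case $m=\frac{N}{p_1p_2}$ (asserting the general case is ``analogous''): it forms the partial sums $t_{i,j}$ over the $\Z_m$-cosets, reads the $2^{\omega(N/m)}$ hypotheses $\Phi_l\mid m_B$ as divisibility of the auxiliary mask polynomial $m_U$ over $\Q(\xi_m)$ by $\Phi_{p_1},\Phi_{p_2},\Phi_{p_1p_2}$, and invokes $\Phi_{p_1}\Phi_{p_2}\Phi_{p_1p_2}=1+x+\dots+x^{p_1p_2-1}$ to force $U$ constant, hence zero. You form the same partial sums $t_c$, but instead extract from the \emph{same} hypotheses the full family of evaluations $m_B(\xi_m\xi_{N/m}^k)=0$ for all $k\in\Z_{N/m}$ (each $\xi_m\xi_{N/m}^k$ being a primitive $l_k$-th root with $m\mid l_k\mid N$, so every $\Phi_{l_k}$ you need is available), and then kill all the $t_c$ at once by orthogonality of the characters of $\Z_{N/m}$. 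This buys a uniform treatment of every $m\mid N$ with no case analysis, at the cost of nothing; it is the Fourier-inversion form of the paper's cyclotomic-product identity. One small point of rigour: with the literal choices $\xi_m=e^{2\pi i/m}$, $\xi_{N/m}=e^{2\pi i/(N/m)}$ the identity $\xi_N^b=\xi_m^b\xi_{N/m}^b$ is false; you must take $\xi_m=\xi_N^{e}$ and $\xi_{N/m}=\xi_N^{f}$ with $e,f$ the CRT idempotents ($e\equiv1\bmod m$, $e\equiv0\bmod N/m$, etc.), which are still primitive roots of the stated orders. Since every step of your argument only uses that they are \emph{some} primitive $m$-th and $(N/m)$-th roots, and since $\Phi_m$-divisibility of an integer polynomial is detected by any one primitive $m$-th root, this changes nothing (the final evaluation becomes a unimodular constant times $t_\gamma$, which still vanishes); the paper's own Lemma 2.13 carries the same implicit normalisation.
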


\begin{proof}
Lemma \ref{lemprop1} handles the case $m=\frac{N}{p_i}$ so we may assume $k \ge 2$ and $m \mid \frac{N}{p_ip_j}$, where $i \ne j$.

We prove the statement for $m=\frac{N}{p_1p_2}$. More general situation can be proved analogously. We prove that if $\Phi_N\Phi_{\frac{N}{p_1}}\Phi_{\frac{N}{p_2}}\Phi_{\frac{N}{p_1p_2}}\mid m_B$, then $\Phi_{\frac{N}{p_1p_2}}\mid m_{(B_{i,j}-b_{i,j})}$, where $B_{i,j}=\{b\in B | ~b\equiv i \pmod{p_1}, b\equiv j \pmod {p_2}\}$ and $b_{i,j}$ is a coset representative from $B_{i,j}$. For the sake of simplicity, from now on we identify $B_{i,j}-b_{i,j}$ with $B_{i,j}$.
As in the proof of Lemma \ref{lemprop1}, $\Phi_N\mid m_B$, $\Phi_{\frac{N}{p_1}}\mid m_B$,
 $\Phi_{\frac{N}{p_2}}\mid m_B$, $\Phi_{\frac{N}{p_1p_2}}\mid m_B$ is equivalent to
 \begin{equation*}
\begin{split}
0=& \sum_{i=0}^{p_1-1}\sum_{j=0}^{p_2-1}  \xi_{p_1}^{i} \xi_{p_2}^j  \sum_{t \in B_{i,j}}\xi_{p_3}^{a_3(t)}\ldots
\xi_{p_k}^{a_k(t)},\\
0=&\sum_{i=0}^{p_1-1}\sum_{j=0}^{p_2-1}  \xi_{p_2}^j \sum_{t \in B_{i,j}}\xi_{p_3}^{a_3(t)}\ldots
\xi_{p_k}^{a_k(t)},\\
0=&\sum_{i=0}^{p_1-1}\sum_{j=0}^{p_2-1}  \xi_{p_1}^i \sum_{t \in B_{i,j}}\xi_{p_3}^{a_3(t)}\ldots \xi_{p_k}^{a_k(t)},\\
0=&\sum_{i=0}^{p_1-1}\sum_{j=0}^{p_2-1}\sum_{t \in B_{i,j}}\xi_{p_3}^{a_3(t)}\ldots \xi_{p_k}^{a_k(t)},
\end{split}
\end{equation*}
where $a_h(t) \equiv t \pmod{p_h}$, for $h=3 \stb k$, respectively.
Let $t_{i,j}=\sum_{t \in B_{i,j}}\xi_{p_3}^{a_3(t)}\ldots \xi_{p_k}^{a_k(t)}$. Then we have that
 \begin{equation*}
0= \sum_{i=0}^{p_1-1}\sum_{j=0}^{p_2-1}t_{i,j} \xi_{p_1}^{i} \xi_{p_2}^j, \
0=\sum_{i=0}^{p_1-1}\sum_{j=0}^{p_2-1} t_{i,j} \xi_{p_2}^j, \
0=\sum_{i=0}^{p_1-1}\sum_{j=0}^{p_2-1} t_{i,j} \xi_{p_1}^i, \
0=\sum_{i=0}^{p_1-1}\sum_{j=0}^{p_2-1}t_{i,j}.
\end{equation*}

Let $U$ be a function from $\Z_{p_1p_2}$ to $\Q(\xi_{p_3} \stb \xi_{p_k})$ defined as $U(b_{i,j})=t_{i,j}$. Note that the minimal polynomial of $\xi_{p_1}$, $\xi_{p_2}$ and $\xi_{p_1p_2}$ over $\Q(\xi_{p_3} \stb \xi_{p_k})$ are $\Phi_{p_1}, \Phi_{p_2}$ and $\Phi_{p_1p_2}$, respectively. Then the first three equations imply that $\Phi_{p_1p_2}\Phi_{p_1}\Phi_{p_2} \mid m_U$. Since $\Phi_{p_1p_2}\Phi_{p_1}\Phi_{p_2}= \sum_{i=0}^{p_1p_2-1}x^{i}$ we have $U$ is constant. Using the last equation we obtain $t_{i,j}= 0$ for every $i \in \Z_{p_1}, j \in \Z_{p_2}$.
\end{proof}

\end{document}